\documentclass[a4paper, leqno]{article}
\usepackage[english]{babel}
\usepackage{amsmath,amscd,amssymb,amsthm}
\usepackage{pb-diagram}
\usepackage{graphicx}
\usepackage{bm}
\usepackage{tikz} 
\usepackage{centernot,cancel}
\newtheorem{definition}{Definition}[section]
\newtheorem{theorem}{Theorem}[section]
\newtheorem{lemma}{Lemma}[section]
\newtheorem{proposition}{Proposition}[section]
\newtheorem{corollary}{Corollary}[section]

\newtheorem{remark}{Remark}[section]
\setlength{\topmargin}{-0cm}
\setlength{\headheight}{0cm}
\setlength{\headsep}{0cm}
\setlength{\textwidth}{17cm}
\setlength{\textheight}{22.5cm}
\setlength{\oddsidemargin}{-.5cm}
\setcounter{tocdepth}{2}
\begin{document}
\title{Harmonic Bundles with Symplectic Structures}
\author{Takashi Ono\thanks{Department of Mathematics, Graduate School of Science, Osaka University, Osaka, Japan, u708091f@ecs.osaka-u.ac.jp}}

\date{}
\maketitle
\begin{abstract}
We study harmonic bundles with an additional structure called symplectic structure. We study them for the case of the base manifold is compact and non-compact. For the compact case, we show that a harmonic bundle with a symplectic structure is equivalent to principle $\mathrm{Sp}(2n,\mathbb{C})$-bundle with a reductive flat connection. For the non-compact case, we show that a polystable good filtered Higgs bundle with a perfect skew-symmetric pairing is equivalent to a good wild harmonic bundle with a symplectic structure.
\end{abstract}

\noindent
MSC: 53C07, 14J60\\
Keywords: harmonic bundles, non-abelian Hodge correspondence, Kobayashi-Hitchin correspondence
\section{Introduction}
\subsection{Harmonic bundles}
Let $X$ be a complex manifold. A Higgs bundle $(E,\overline\partial_E, \theta)$ over $X$ is a pair such that  $\mathrm{(i)}$ $(E,\overline\partial_E)$ is a holomorphic bundle on $X$ and $(\mathrm{ii})$ $\theta$ is a holomorphic 1-form which takes value in $\mathrm{End}(E)$ and satisfies the integrability condition $\theta\wedge\theta$=0. $\theta$ is called the Higgs field. It was introduced by Hitchin \cite{H} and Simpson generalized it to higher dimensional case \cite{S1}. Although there are many interesting objects that are related to the Higgs bundle, in this paper, we focus on $\textit{harmonic bundles}$.\par
Let $h$ be a hermitian metric of $E$. Let $\partial_h$ be the (1,0)-part of the Chern connection with respect to $\overline\partial_E$ and $h$. Let $\theta_h^\dagger$ be the adjoint of $\theta$ with respect to $h$. We obtain a connection $\nabla_h:=\partial_h+\overline\partial_E+\theta+\theta_h^\dagger$. We call a metric $h$ a $\textit{pluri-harmonic metric}$ if $\nabla_h^2=0.$ We call a bundle $(E,\overline\partial_E,\theta, h)$ a harmonic bundle if $h$ is a harmonic metric.  When $X$ is compact and K\"ahler, Hitchin and Simpson proved that the existence of such metrics is equivalent to the stability condition of Higgs bundles. This correspondence is called \textit{Kobayashi-Hitchin correspondence}.
\begin{theorem} [\cite{H,S1}\label{HS}]
Suppose $X$ is a compact K\"ahler manifold. $(E,\overline\partial_E,\theta)$ admits a harmonic metric if and only $(E,\overline\partial_E, \theta)$ is a polystable Higgs bundle and $c_1(E)=c_2(E)=0$. If $h_1$ and $h_2$ are harmonic metrics, then there exists a decomposition $(E,\overline\partial_E, \theta)=\oplus_i(E_i,\overline\partial_{E_i}, \theta_i)$ such that (i) the decomposition is orthonormal with respect to both $h_1$ and $h_2$ $(ii)$ there exist an $a_i>0$ such that  $h_1|_{E_i}=a_ih_2|_{E_i}$ for each $i$.
\end{theorem}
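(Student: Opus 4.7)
The plan is to address the existence statement and the uniqueness statement separately, following the strategies of Hitchin (for curves) and Simpson (for the higher-dimensional case).

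For the ``only if'' direction of the existence statement, I would assume that a harmonic metric $h$ exists and derive polystability and vanishing of Chern classes by Chern--Weil theory. Since $\nabla_h^2=0$, the curvature $R(\partial_h+\overline\partial_E)=-[\theta,\theta_h^\dagger]$, and one computes
\[
\mathrm{tr}\,R(\partial_h+\overline\partial_E)=0,\qquad \mathrm{tr}\bigl(R(\partial_h+\overline\partial_E)\wedge R(\partial_h+\overline\partial_E)\bigr)
\]
expressed in terms of $\theta$ and $\theta_h^\dagger$ so that the representative of $c_2(E)-\tfrac12 c_1(E)^2$ coming from Chern--Weil vanishes pointwise. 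Combined with standard Gauss--Bonnet arguments on K\"ahler manifolds this gives $c_1(E)=c_2(E)=0$. Polystability is obtained by the usual curvature estimate applied to a coherent Higgs subsheaf: the second fundamental form computation shows that any Higgs subsheaf has degree bounded by the integral of $|\text{second fundamental form}|^2$, hence non-positive degree, with equality forcing a direct summand. This is the argument that Simpson axiomatised using the notion of a Hermitian--Einstein Higgs bundle.

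For the ``if'' direction, the main step is analytic: given polystability, I would produce a harmonic metric by minimising Donaldson's functional adapted to Higgs bundles, or equivalently by running the Donaldson--Yang--Mills heat flow for the connection $\partial_h+\overline\partial_E+\theta+\theta_h^\dagger$. Short-time existence and parabolicity are routine; long-time existence follows from a Moser iteration giving $C^0$ bounds on $\log(h_t h_0^{-1})$. The difficult step, and the principal obstacle, is the convergence of the flow as $t\to\infty$, which is exactly where the polystability hypothesis enters through a Uhlenbeck--Yau type argument: if the $L^1$ norm of $\log h_t$ were unbounded, one would renormalise to extract a weakly holomorphic sub-bundle whose existence contradicts polystability together with the vanishing $c_1(E)=0$ and $\int \mathrm{ch}_2(E)\wedge\omega^{n-2}=0$. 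Once convergence is established, the limit metric is Hermitian--Einstein with harmless constant, and the assumption $c_1(E)=0$ forces the Einstein constant to vanish, so $h_\infty$ is harmonic.

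For the uniqueness/decomposition statement, let $h_1,h_2$ be two harmonic metrics and set $s:=h_1^{-1}h_2\in\mathrm{End}(E)$, which is self-adjoint and positive with respect to both metrics. A direct computation using the flatness of $\nabla_{h_1}$ and $\nabla_{h_2}$ and the identity relating $\theta_{h_2}^\dagger$ to $\theta_{h_1}^\dagger$ via $s$ yields a Bochner-type identity of the form
\[
\Delta\,\mathrm{tr}(s)=|\nabla s|_{h_1,h_2}^2+\text{non-negative terms involving }[\theta,\cdot],
\]
so that on the compact K\"ahler manifold $X$ integration by parts forces $s$ to be covariantly constant for $\overline\partial_E$, compatible with $\theta$, and $\partial_{h_1}$-parallel. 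The eigenspace decomposition of $s$ then provides the orthogonal Higgs sub-bundle decomposition $E=\oplus E_i$ with $h_1|_{E_i}=a_i h_2|_{E_i}$. The main obstacle at this step is verifying that the cross terms in the Bochner identity assemble into non-negative terms; this is the Higgs analogue of Donaldson's original uniqueness argument for Hermitian--Einstein metrics.
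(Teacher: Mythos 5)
The paper does not prove this statement; it is quoted from Hitchin and Simpson, so your proposal can only be measured against the published arguments. Your overall architecture (Chern--Weil and a subsheaf degree estimate for the ``only if'' direction, heat flow plus an Uhlenbeck--Yau destabilising-subsheaf argument for existence, and a Bochner/maximum-principle argument with $s=h_1^{-1}h_2$ for the decomposition statement) is the correct one, and the uniqueness part is a faithful sketch of Simpson's argument.

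There is, however, one genuine gap in the existence direction: the final sentence, ``the assumption $c_1(E)=0$ forces the Einstein constant to vanish, so $h_\infty$ is harmonic,'' is a non sequitur. The limit of the flow (or the Uhlenbeck--Yau argument) produces a Hermitian--Yang--Mills metric, i.e.\ $\Lambda\bigl(F(\partial_h+\overline\partial_E)+[\theta,\theta_h^\dagger]\bigr)=\lambda\,\mathrm{Id}$, and $c_1(E)=0$ indeed gives $\lambda=0$; but the vanishing of the \emph{contraction} $\Lambda G$ of the pseudo-curvature $G:=F+[\theta,\theta_h^\dagger]$ does not imply $G=0$, which is what harmonicity ($\nabla_h^2=0$) requires. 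The missing ingredient is the L\"ubke/Bogomolov-type integral identity expressing $\int_X\bigl(2\,\mathrm{ch}_2(E)-\tfrac{r-1}{r}c_1(E)^2\bigr)\wedge\omega^{n-2}$ as a positive multiple of $\|G^\perp\|_{L^2}^2-\|\Lambda G\|_{L^2}^2$ (Simpson's Proposition~3.4 in the cited reference): only by combining the HYM equation with the hypothesis $c_1(E)=c_2(E)=0$ does one conclude $G\equiv 0$ pointwise. In your sketch the vanishing of $\mathrm{ch}_2$ is invoked only inside the Uhlenbeck--Yau contradiction, where it is not actually needed (stability alone yields the HYM metric); it is needed precisely at this last step, which you have skipped. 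A second, more minor, imprecision occurs in the ``only if'' direction: the Chern--Weil representative of $c_2-\tfrac12 c_1^2$ computed from the metric connection is $\mathrm{tr}\bigl([\theta,\theta_h^\dagger]\wedge[\theta,\theta_h^\dagger]\bigr)$ up to constants, which is not pointwise zero; the clean argument for $c_1(E)=c_2(E)=0$ is simply that the smooth bundle $E$ carries the flat connection $\nabla_h$, so all its real Chern classes vanish.
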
 
We next recall the Kobayashi-Hitchin correspondence for the non-compact case. Let $X$ be a smooth projective variety over $\mathbb{C}$, $H$ be a normal crossing divisor of $X$, and $L$ be an ample line bundle of $X$. Kobayashi-Hitchin correspondence for the non-compact case is the correspondence between good wild harmonic bundles $(E,\overline\partial_E,\theta,h)$ on $X-H$
and  $\mu_L$-polystable good filtered Higgs bundles $(\mathcal{P}_*\mathcal{V},\theta)$ with vanishing Chern classes on $(X,H)$. See \cite{M0, S1,S2} for details about filtered bundles. In Section \ref{f sh0}, we briefly recall them.\par
The study of harmonic bundles for the non-compact case was initiated in \cite{S1, S2}. Simpson studied them on curves and when the Higgs field has the singularity called $\textit{tame}$. He established the Kobayashi-Hitchin correspondence in this case. In \cite{BB}, Biquard-Boalch studied the harmonic bundles on curves when the Higgs field admits a singularity called $\textit{wild}$ and proved the correspondence. In \cite{M1,M2}, Mochizuki fully generalized the correspondence for the higher dimensional case.\par
\begin{theorem}[\cite{BB,M1, M2,S1,S2}]\label{SBBM}
Let $X$ be a smooth projective variety, $H$ be a normal crossing divisor of $X$, and $L$ be an ample line bundle of $X$. Let $(E,\overline\partial_E, \theta, h)$ be a good wild harmonic bundle on $X-H$. Then $(\mathcal{P}_*^hE,\theta)$ is a $\mu_L$-polystable good filtered Higgs bundle with $\mu_L(\mathcal{P}_*^hE)=0$ and $\int_X\mathrm{ch}_2(\mathcal{P}_*^hE)c_1(L)^{\mathrm{dim}X-2}=0$.\par
Conversely, let $(\mathcal{P}_*\mathcal{V},\theta)$ be a $\mu_L$-polystable good filtered Higgs bundle satisfying the following vanishing condition:
\begin{equation}\label{vanish0}
 \mu_L(\mathcal{P}_{\ast}\mathcal{V})=0, \int_X\mathrm{ch}_2(\mathcal{P}_{\ast}\mathcal{V})c_1(L)^{\mathrm{dim}X-2}=0.
  \end{equation}
   Let $(E,\overline\partial_E, \theta)$ be the Higgs bundle which we obtain from the restriction of $(\mathcal{P}_*\mathcal{V},\theta)$ to $X- H$. Then there exists a pluri-harmonic metric $h$ for $(E,\overline\partial_E,\theta)$ such that $(\mathcal{V},\theta)|_{X\backslash H}\simeq(E,\theta)$ extends to $(\mathcal{P}_{\ast}\mathcal{V},\theta)\simeq(\mathcal{P}^h_{\ast}E,\theta)$.
   \end{theorem}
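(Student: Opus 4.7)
The plan is to treat the two directions of the correspondence separately, following the strategy pioneered by Simpson in the tame one-dimensional case and developed to full generality by Biquard-Boalch and Mochizuki.

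For the forward direction, start with a good wild harmonic bundle $(E, \overline\partial_E, \theta, h)$ on $X - H$ and define the prolongation $\mathcal{P}_c^h E$ via polynomial growth conditions on the norms of local holomorphic sections with respect to $h$. The first step is to prove that this prolongation is a locally free $\mathcal{O}_X$-module. This is the heart of the analytic input and relies on norm estimates comparing $h$ to a model metric built from the formal structure of $(E, \theta)$ near each point of $H$. Once local freeness is established, the goodness of the filtered Higgs bundle, i.e.\ that $\theta$ extends with the expected pole order, follows from the decomposition of $\theta$ obtained from the same norm estimates. The vanishing of $\mu_L$ and of the weighted second Chern character comes from a Chern-Weil calculation for filtered bundles: using the flatness of $\nabla_h$ together with cutoff functions concentrated near $H$, one shows that the boundary contributions vanish by the harmonic norm estimates.

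The polystability is then proved by contradiction. Given any saturated filtered Higgs subsheaf $\mathcal{P}_{\ast}\mathcal{V}' \subset \mathcal{P}_*^h E$, restrict to $X - H$, form the orthogonal projection $\pi$ onto its generic fiber as a measurable endomorphism, and show that $\pi$ is smooth and preserved by $\overline\partial_E$ and $\theta$ using the integrability $\nabla_h^2=0$. Controlling the boundary terms with the harmonic norm estimates forces $\mu_L(\mathcal{P}_{\ast}\mathcal{V}') \leq 0$, with equality producing an orthogonal $\nabla_h$-flat splitting; iterating yields polystability.

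The converse direction is the main obstacle, since it requires solving a nonlinear elliptic system on an open manifold with prescribed singular boundary behavior. Starting from a polystable filtered Higgs bundle $(\mathcal{P}_*\mathcal{V}, \theta)$ satisfying \eqref{vanish0}, first construct an initial Hermitian metric $h_0$ on $E = \mathcal{V}|_{X-H}$ adapted to the parabolic weights and to the formal decomposition of $\theta$ near $H$, a \emph{good initial metric} in the sense of Mochizuki. Writing the desired pluri-harmonic metric as $h = h_0 e^{-s}$ with $s$ self-adjoint with respect to $h_0$, the pluri-harmonic condition reduces, via standard Kähler identities on a suitable complete Kähler metric $\omega$ on $X-H$, to a Hermitian-Einstein-type equation $\Lambda_\omega F(h,\theta) = 0$ with boundary behavior encoded in $h_0$. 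One attacks this PDE by Donaldson's heat flow combined with a continuity method, establishing long-time existence and uniform $C^0$ control. The crucial input is the a priori $L^\infty$ bound on $s$: if it failed, one would rescale and extract a weakly holomorphic destabilizing filtered Higgs subsheaf, contradicting polystability, in direct analogy with Uhlenbeck-Yau but with the filtered slope, which is preserved thanks to the choice of good initial metric. Once $h$ exists, the norm estimates from the forward direction identify $\mathcal{P}_*^h E$ with $\mathcal{P}_{\ast}\mathcal{V}$, and the isotypic decomposition yields the asserted uniqueness up to scalars on each factor.
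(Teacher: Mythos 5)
This is a theorem the paper itself does not prove: it is quoted from Simpson, Biquard--Boalch, and Mochizuki, and its two halves reappear later as Proposition \ref{mor} and Theorem \ref{KH0}, again by citation only. So there is no internal proof to compare against; what you have written is a roadmap of the external literature. As a roadmap it is directionally faithful, but as a proof it is not one: every analytically substantive step (the norm estimates comparing $h$ to a model metric near $H$, local freeness of $\mathcal{P}_*^hE$, the a priori $C^0$ bound, the regularity of the destabilizing weakly holomorphic filtered subsheaf) is named rather than carried out, and these are precisely the points that occupy most of \cite{M1,M2}.

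Beyond the black-boxing, one step is misstated in a way that matters. You write that the pluri-harmonic condition ``reduces, via standard K\"ahler identities, to a Hermitian--Einstein-type equation $\Lambda_\omega F(h,\theta)=0$.'' It does not: pluri-harmonicity is $\nabla_h^2=0$, which is strictly stronger than the vanishing of the contracted curvature. The PDE one actually solves is the Hermitian--Einstein equation, and the upgrade to full flatness is a separate Chern--Weil/Bogomolov-type argument in which the hypothesis $\mu_L(\mathcal{P}_*\mathcal{V})=0$ and $\int_X\mathrm{ch}_2(\mathcal{P}_*\mathcal{V})c_1(L)^{\dim X-2}=0$ is used to force $\int|F|^2=0$. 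Your sketch never says where the vanishing condition (\ref{vanish0}) enters, and this is exactly where. Two smaller points: in the higher-dimensional wild case the existence proof in the literature proceeds by exhausting $X-H$ and solving Dirichlet problems on compact pieces (with Mehta--Ramanathan-type restriction to curves controlling stability), rather than by a single global heat flow on the open manifold; and the adaptedness $\mathcal{P}_*^hE=\mathcal{P}_*\mathcal{V}$ requires mutual boundedness of $h$ with the initial metric $h_0$ uniformly near $H$, which is an additional estimate, not an automatic consequence of an $L^\infty$ bound on $s$ over compact subsets.
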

\subsection{Harmonic Bundles with Symplectic Structures}
\subsubsection{Results}
We first state the main results of this paper.  Let $X$ be a smooth projective variety over $\mathbb{C}$ and $H\subset X$ be a normal crossing divisor.
\begin{theorem}[Theorem \ref{KH}]\label{intro main}
 The following objects are equivalent on $(X,H)$
    \begin{itemize}
  \item  Good wild harmonic bundles with a symplectic structure.
  \item  Good filtered polystable Higgs bundles equipped with a perfect skew-symmetric pairing satisfying the vanishing condition (\ref{vanish0}).
  \end{itemize}
\end{theorem}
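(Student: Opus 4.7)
The plan is to bootstrap from Theorem \ref{SBBM} using duality and the uniqueness of pluri-harmonic metrics on polystable objects; the holomorphic data in each direction are supplied by Theorem \ref{SBBM}, and what remains is to transport the symplectic datum across the correspondence.

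For the forward direction, let $(E,\overline\partial_E,\theta,h)$ be a good wild harmonic bundle with symplectic form $\omega$. Theorem \ref{SBBM} yields a polystable good filtered Higgs bundle $(\mathcal{P}_*^hE,\theta)$ satisfying (\ref{vanish0}). The Higgs-bundle dual $(E^*,-\theta^t,h^{-1})$ is again good wild harmonic, and I would verify using the local formal models of \cite{M1,M2} that its filtered prolongation coincides with $(\mathcal{P}_*^hE)^\vee$. The holomorphic isomorphism $\Phi\colon E\to E^*$ induced by $\omega$ intertwines $\theta$ with $-\theta^t$, and compatibility of $\omega$ with $h$ makes $\Phi$ an isometry between $h$ and $h^{-1}$; hence $\Phi$ preserves $h$-growth orders near $H$ and extends to an isomorphism of filtered Higgs bundles $\mathcal{P}_*^hE\xrightarrow{\sim}(\mathcal{P}_*^hE)^\vee$, inheriting skew-symmetry and non-degeneracy from $\omega$.

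For the reverse direction, which is the deeper one, let $(\mathcal{P}_*\mathcal{V},\theta)$ be polystable with perfect skew-symmetric pairing $\omega$ and the vanishing condition. Theorem \ref{SBBM} furnishes a pluri-harmonic metric $h$ on $(E,\theta):=(\mathcal{V},\theta)|_{X\setminus H}$ with $\mathcal{P}_*^hE\simeq\mathcal{P}_*\mathcal{V}$. Using $\omega$ I transport the dual harmonic metric $h^{-1}$ on $E^*$ back to $E$ to obtain a second pluri-harmonic metric $h':=\Phi^*h^{-1}$ whose filtered prolongation is again $\mathcal{P}_*\mathcal{V}$. The uniqueness statement in Theorem \ref{SBBM} (the analogue for wild harmonic bundles of the second half of Theorem \ref{HS}) forces $h$ and $h'$ to differ by positive scalars on each factor of the polystable decomposition of $(E,\theta)$. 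Pairing up the simple summands according to how $\omega$ restricts to them, and rescaling $h$ on each summand or dual pair, produces a metric satisfying $h=\Phi^*h^{-1}$, which is the compatibility condition promoting $\omega$ to a symplectic structure on the harmonic bundle.

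The main obstacle I anticipate is the identification $(\mathcal{P}_*^hE)^\vee\simeq\mathcal{P}_*^{h^{-1}}E^*$ in the good wild regime, since the prolongations are dictated by the irregular type and Stokes structure of $\theta$ along $H$ and one must verify that duality exchanges these correctly. A secondary technical point is the bookkeeping in the rescaling step: the polystable decomposition need not be $\omega$-orthogonal, so one has to analyze how $\omega$ permutes the isotypic components before concluding that the required compatibility can be attained by rescaling $h$ factor by factor.
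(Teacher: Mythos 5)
Your forward direction is essentially the paper's: compatibility of $\omega$ with $h$ makes $\Psi_\omega$ an isometry onto $(E^\vee,-\theta^\vee,h^\vee)$, hence it preserves growth orders and extends to an isomorphism $\mathcal{P}^h_*E\to(\mathcal{P}^h_*E)^\vee$ of filtered Higgs bundles; the identification $\mathcal{P}^{h^\vee}_*E^\vee\simeq(\mathcal{P}^h_*E)^\vee$ that you flag as the "main obstacle" is a standard property of prolongations by acceptable metrics and is exactly how the paper disposes of it (Proposition \ref{w-f}).

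The reverse direction is where you genuinely diverge from the paper, and where there is a gap. The paper does not run a fixed-point argument on the metric: it takes the canonical decomposition $(\mathcal{P}_*\mathcal{V},\theta)=\bigoplus_i(\mathcal{P}_*\mathcal{V}_i,\theta_i)\otimes\mathbb{C}^{n(i)}$, observes that $\Psi_\omega$ permutes the isotypic blocks, sorts the stable factors into three types (self-dual with symmetric pairing, self-dual with skew-symmetric pairing, non-self-dual), puts $\omega$ into a normal form $P_i\otimes\omega_{\mathbb{C}^{2k}}$, $P_i\otimes C_{\mathbb{C}^{l}}$, or $\widetilde\omega\otimes C_{\mathbb{C}^l}$ on each block (Lemmas \ref{B} and \ref{B2}), and then writes down a compatible adapted harmonic metric explicitly as $h_i\otimes h_{\mathbb{C}^{\bullet}}$ with $h_{\mathbb{C}^\bullet}$ chosen compatible with the standard form (Propositions \ref{B4}, \ref{B5}). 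Your proposal replaces all of this by: set $h'=\Phi^*h^\vee$, invoke the uniqueness clause to get an orthogonal decomposition on which $h$ and $h'$ differ by positive scalars, and rescale. The gap is that this decomposition is only adapted to the particular pair $(h,\Phi^*h^\vee)$; when you rescale $h$ on a summand, $\Phi^*h^\vee$ changes on the summands $\Psi_\omega$ pairs with it, and nothing in the uniqueness statement tells you that $\Psi_\omega$ carries each summand onto the annihilator of the complementary ones. Concretely, on an isotypic block $\mathcal{V}_i\otimes\mathbb{C}^m$ with $\mathcal{V}_i$ self-dual, every adapted harmonic metric is $h_i\otimes g$ for a Hermitian metric $g$ on $\mathbb{C}^m$, and the compatibility equation you need is that $g$ be compatible with an arbitrary non-degenerate bilinear form $A$ on $\mathbb{C}^m$ (with $\omega=P_i\otimes A$); a diagonal rescaling of a given $g$ along a fixed orthogonal splitting does not in general solve $g=\overline{A}^{\,T}\bar g^{-1}A$ (up to transposes). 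To close the argument you must either classify $A$ up to the symmetric/skew dichotomy and change basis on $\mathbb{C}^m$ accordingly --- which is precisely the content of the paper's Lemmas \ref{B} and \ref{B2}, including the fact that $m$ must be even when $P_i$ is symmetric --- or run a genuine fixed-point (geodesic midpoint) argument for the involution $g\mapsto\Phi^*g^\vee$ on the space of metrics on $\mathbb{C}^m$. You correctly anticipate this issue in your last paragraph, but the "bookkeeping" you defer is the actual mathematical content of this half of the theorem.
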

We give an outline of the proof and an explanation of notions in Section \ref{intro non-cpt}. We can regard this result as a Kobayashi-Hitchin correspondence with skew-symmetry. \par
\subsubsection{Compact case}
The contents here are written in Section \ref{cpt sec}.\par
Let $X$ be a compact K\"ahler manifold and $(E,\overline\partial_E,\theta,h)$ be a harmonic bundle of rank $2n$ on $X$.  A symplectic structure $\omega$ for $(E,\overline\partial_E,\theta,h)$ is a non-degenerate skew-symmetric holomorphic pairing of $E$ such that it is skew-symmetric with $\theta$ and compatible with $h$ (See Definition \ref{sym-p}). \par
Let $G$ be a Lie group. We assume $G$ to be semi-simple or algebraic reductive. Let $P\to X$ be a principal $G$-bundle and $\nabla$ be a flat connection of it. We say $\nabla$ is reductive if the corresponding representation $\rho:\pi_1(X)\to G$ is semisimple. Let $K\subset G$ be a maximal compact subgroup. Since $P$ has a flat connection $\nabla$,  a $K$-reduction $P_K$ of $P$ is equivalent to a smooth map $f:\widetilde{X}\to G/K$. Here $\widetilde{X}$ is the universal covering of $X$. We say that the reduction is \textit{harmonic} if $f$ is a harmonic map. It was shown in \cite{Co,Do,S3} that $P$ has a harmonic reduction if and only if $\nabla$ is reductive. 
\par
The first result of this paper is as follows.
\begin{theorem}[Theorem \ref{H-P}]\label{intro main2}
  Let $X$ be a compact K\"ahler manifold. The following objects are equivalent on $X$.
   \begin{itemize}
 \item Polystable Higgs bundle of rank 2n with vanishing Chern classes equipped with a perfect skew-symmetric pairing.
 \item Harmonic bundle of rank 2n equipped with a symplectic structure.
 \item Principal $\mathrm{Sp}(2n,\mathbb{C})$-Bundle with a reductive flat connection.
  \end{itemize}
 \end{theorem}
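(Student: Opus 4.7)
The plan is to establish the three-way equivalence as a cycle, using Theorem \ref{HS} (Hitchin--Simpson) for \textbf{(1)} $\Leftrightarrow$ \textbf{(2)} and the Corlette--Donaldson--Simpson theorem for \textbf{(2)} $\Leftrightarrow$ \textbf{(3)}. The nontrivial point is that the three auxiliary structures (the perfect skew-symmetric pairing on the Higgs side, the symplectic structure on the harmonic side, and the reduction of structure group to $\mathrm{Sp}(2n,\mathbb{C})$ on the flat side) match up under these correspondences.

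For \textbf{(1)} $\Rightarrow$ \textbf{(2)}, I would start with a polystable Higgs bundle $(E,\overline{\partial}_E,\theta)$ of rank $2n$ with vanishing Chern classes and a perfect skew-symmetric pairing $\omega$, and apply Theorem \ref{HS} to extract a harmonic metric $h$. The remaining task is to arrange compatibility of $h$ with $\omega$. The pairing supplies an isomorphism of Higgs bundles $\omega : (E,\theta) \to (E^{\vee}, -\theta^{T})$, and the harmonic-metric structure on $(E^{\vee},-\theta^{T})$ induced by duality from $h$ pulls back along $\omega$ to a second harmonic metric $h'$ on $(E,\theta)$. By the uniqueness clause of Theorem \ref{HS}, there is a decomposition $(E,\overline{\partial}_E,\theta) = \bigoplus_{i}(E_i,\overline{\partial}_{E_i},\theta_i)$ orthogonal with respect to both $h$ and $h'$, with $h|_{E_i} = a_{i}\,h'|_{E_i}$ for positive constants $a_{i}$. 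Since the $\omega$-transport operation inverts scalar factors, rescaling $h$ on each summand $E_i$ by $\sqrt{a_i}$ produces a fixed point of this operation, which is exactly the compatibility with $\omega$ required for a symplectic structure.

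For \textbf{(2)} $\Rightarrow$ \textbf{(3)}, a direct computation shows that when $\omega$ is compatible with $h$ and skew with $\theta$, the flat connection $\nabla_h = \partial_h + \overline{\partial}_E + \theta + \theta^{\dagger}_h$ preserves $\omega$: the Chern part $\partial_h + \overline{\partial}_E$ preserves $\omega$ by the hermitian compatibility, and $\theta + \theta^{\dagger}_h$ preserves $\omega$ because $\theta$ is skew with respect to $\omega$ and $\theta^{\dagger}_h$ inherits the skew condition via $h$. Thus $\nabla_h$ is a flat connection on the principal $\mathrm{Sp}(2n,\mathbb{C})$-bundle $P$ cut out by $\omega$, and the Corlette--Donaldson--Simpson theorem guarantees reductivity since $\nabla_h$ arises from a harmonic metric. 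For the reverse \textbf{(3)} $\Rightarrow$ \textbf{(2)}, the same theorem provides a harmonic reduction of a reductive flat $\mathrm{Sp}(2n,\mathbb{C})$-bundle to the maximal compact $\mathrm{Sp}(2n)$, which is precisely a hermitian metric on the associated rank $2n$ vector bundle compatible with the symplectic form; splitting $\nabla$ by type and into its Chern part and its off-diagonal parts produces $(\overline{\partial}_E,\theta)$ with $\theta$ automatically skew with respect to $\omega$. Finally, \textbf{(2)} $\Rightarrow$ \textbf{(1)} is the standard Higgs-side of Theorem \ref{HS}, with $\omega$ transferred unchanged as a holomorphic pairing.

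The main obstacle is the compatibility step in \textbf{(1)} $\Rightarrow$ \textbf{(2)}: the harmonic metric produced by Theorem \ref{HS} is a priori unrelated to $\omega$, and one must show it can be adjusted to be $\omega$-compatible. The uniqueness-up-to-scaling assertion in Theorem \ref{HS} is the essential ingredient, as it precisely controls the discrepancy between $h$ and its $\omega$-transported cousin $h'$ and makes the rescaling above possible. A subsidiary technical point that will need attention is the interaction between the decomposition into indecomposable harmonic summands and the $\omega$-induced duality, but this should follow formally from polystability and the isomorphism $(E,\theta) \simeq (E^{\vee},-\theta^{T})$.
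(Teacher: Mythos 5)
Your proposal is correct in outline, but it reaches the equivalence of the first two objects by a genuinely different route from the paper, while the equivalence of the last two follows the paper's argument. The paper deduces (1) $\Leftrightarrow$ (2) from Corollary \ref{KH cpt}, i.e.\ from the structure theory of Section \ref{KH part}: the polystable object is decomposed into basic pieces of three types (stable self-dual with symmetric pairing, stable self-dual with skew-symmetric pairing, and hyperbolic pairs of non-self-dual stable pieces), and a compatible harmonic metric is assembled explicitly on each piece (Propositions \ref{B4} and \ref{B5}). You instead produce the compatible metric abstractly, as a fixed point of the involution $T(h)=\Psi_\omega^*(h^\vee)$ on the set of harmonic metrics, using only the uniqueness clause of Theorem \ref{HS}; this is shorter and works directly on any compact K\"ahler manifold, but it yields less information (the paper's route also classifies all compatible adapted metrics, the ``moreover'' part of Proposition \ref{B5}). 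One step of yours needs more care than you give it: $T$ need not preserve the bi-orthogonal decomposition $\bigoplus_j F_j$ furnished by Theorem \ref{HS}, since $\Psi_\omega$ may carry one summand onto the dual of a \emph{different} summand, so ``$T$ inverts scalar factors'' does not by itself show that rescaling by $\sqrt{a_j}$ gives a fixed point. The clean repair is to note that, by the uniqueness clause, the harmonic metrics form a product of symmetric spaces $\prod_i\mathrm{GL}(m_i,\mathbb{C})/\mathrm{U}(m_i)$ on which $T$ acts as an involutive isometry, and that your rescaled metric is precisely the midpoint of the geodesic from $h$ to $T(h)$, which any involutive isometry must fix; equivalently, one checks directly that $T$ matches up the $F_j$ in pairs with equal ratios $a_j$, so the rescaling is consistent. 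For (2) $\Leftrightarrow$ (3) your computation is essentially Propositions \ref{one}--\ref{four} of the paper; do include the two verifications the paper carries out and you elide, namely that $\omega$ admits local holomorphic Darboux frames (so that the $\mathrm{Sp}(2n,\mathbb{C})$-reduction actually exists, Proposition \ref{one}), and, in the direction (3) $\Rightarrow$ (2), that $\omega$ is holomorphic for the new operator $D^{0,1}_h$ rather than merely smooth (end of Proposition \ref{four}).
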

 The equivalence of the first two objects is a consequence of Theorem \ref{intro main}. In Section \ref{cpt sec} we give the proof of the equivalence of the last two objects. 
\par
We explain the outline of it here. Let $(E,\overline\partial_E,\theta,h)$ be a harmonic bundle of rank $2n$ with a symplectic structure $\omega$. Let $P_E\to X$ be the principal $\mathrm{GL}(2n,\mathbb{C})$-bundle associated to $E$. $\omega$ defines a $\mathrm{Sp}(2n,\mathbb{C})$-reduction $P_{E,\mathrm{Sp}(2n,\mathbb{C})}$ of $P_E$. We show that $\nabla_h$ defines a flat connection $\nabla$ on $P_{E,\mathrm{Sp}(2n,\mathbb{C})}$ and $h$ defines a $\mathrm{Sp}(2n)$-reduction $P_{E,\mathrm{Sp}(2n)}$ of $P_{E,\mathrm{Sp}(2n,\mathbb{C})}$. Here $\mathrm{Sp}(2n)$ is the standard maximal comapct subgroup of $\mathrm{Sp}(2n,\mathbb{C})$. Since $h$ is a pluri-harmonic metric, the induced map $f_h:\widetilde{X}\to\mathrm{Sp}(2n,\mathbb{C})/\mathrm{Sp}(2n)$ is harmonic and hence $\nabla$ is reductive.\par
Before we explain the converse construction, we briefly recall the construction of a harmonic bundle from a principle $G$-bundle with a reductive flat connection $\nabla.$ Let $\tau:G\to GL(V)$ be a linear representation and $E:=P\times^\tau V$ be the associated vector bundle. Let $D$ be the flat connection induced by $\nabla$ and $h$ be the metric induced by $f$. We have a decomposition $D=D_h+\phi$ such that $D_h$ is a metric connection and $\phi$ is self-adjoint w.r.t. $h$. Let $D^{0,1}_h$ and $\phi^{1,0}$ be the $(0,1)$- and $(1,0)$-part of $D_h$ and $\phi$. If $\nabla$ is reductive (i.e. $f$ is harmonic), then $D^{0,1}_h\circ D^{0,1}_h=0$ and $D^{0,1}_h\phi^{1,0}=0$ holds. Hence $(E,D^{0,1}_h,\phi^{1,0},h)$ is a harmonic bundle (See \cite{S3} for details).
\par 
We apply the construction to the case $G=\mathrm{Sp}(2n,\mathbb{C})$. Let $P\to X$ be a principal $\mathrm{Sp}(2n,\mathbb{C})$-bundle with a reductive flat connection $\nabla.$
Let $i:\mathrm{Sp}(2n,\mathbb{C})\to\mathrm{GL}(2n,\mathbb{C})$ be the standard representation of $\mathbb{C}^{2n}$, $E:=P\times^i \mathbb{C}^{2n}$, and $(E,D^{0,1}_h,\phi^{1,0},h)$ be the constructed harmonic bundles. $E$ has a naturally defined smooth skew-symmetric pairing $\omega$. We show that it is holomorphic, skew-symmetric with $\theta$, and compatible with $h$. Hence $(E,D^{0,1}_h,\phi^{1,0},h)$ is a harmonic bundle with a symplectic structure $\omega$.

\subsubsection{Non-Compact case}\label{intro non-cpt}
The contents here are written in Section \ref{f sh0} and \ref{main s}. Here, we explain the outline of the proof of Theorem \ref{intro main}.\par
Let $X$ be a smooth projective variety and $H$ be a normal crossing divisor.  Let $(E,\overline\partial_E, \theta, h)$ be a good wild harmonic bundle on $X-H$ and   $(\mathcal{P}_*\mathcal{V},\theta)$ be a  good filtered Higgs bundle on $(X, H)$.
In the latter half of this paper, we study the good wild harmonic bundles and good filtered Higgs bundles when they admit a symplectic structure and a perfect skew-symmetric pairing.\par
 A perfect skew-symmetric pairing $\omega$ on  $(\mathcal{P}_*\mathcal{V},\theta$) is a morphism of filtered bundle
\begin{equation*}
  \omega:\mathcal{P}_*\mathcal{V}\otimes\mathcal{P}_*\mathcal{V}\to\mathcal{P}^{(0)}_*(\mathcal{O}_X(\ast H))
      \end{equation*}
  such that it is skew-symmetric and  induces an isomorphism $\Psi_\omega:(\mathcal{P}_*\mathcal{V},\theta)\to(\mathcal{P}_*\mathcal{V}^\vee,-\theta^\vee$). See Section 4 for more details on the pairing of filtered bundles.\par
  In section \ref{KH par}, we show that when the good wild harmonic bundle admits a symplectic structure, then the  good filtered Higgs bundle obtained by prolongation admits a perfect skew-symmetric pairing:
 \begin{proposition}[Proposition \ref{w-f}]\label{intro prop1}
 Let $(E,\overline\partial_E,\theta, h)$ be a good wild harmonic bundle equipped with symplectic structure $\omega$. Then $(\mathcal{P}_*^hE, \theta)$ is a  $\mu_L$-polystable
good filtered Higgs bundle equipped with a perfect skew-symmetric pairing $\omega$ and satisfies the vanishing condition (\ref{vanish0}).
 \end{proposition}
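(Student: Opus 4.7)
The plan is to reduce most of the statement to Theorem \ref{SBBM} and to concentrate the real work on extending the symplectic form to a perfect skew-symmetric pairing of filtered Higgs bundles. Since $(E,\overline\partial_E,\theta,h)$ is a good wild harmonic bundle, Theorem \ref{SBBM} immediately gives that $(\mathcal{P}_*^hE,\theta)$ is a $\mu_L$-polystable good filtered Higgs bundle satisfying the vanishing condition (\ref{vanish0}). What remains is to extend $\omega$ and to verify that $\theta$ is still skew-symmetric with respect to the extension.

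The crucial input is the compatibility of $\omega$ with $h$ from Definition \ref{sym-p}. This compatibility should translate into the statement that the $\mathcal{O}_{X\setminus H}$-linear isomorphism $\Psi_\omega:E\to E^\vee$ induced by $\omega$ is an isometry between $h$ and the dual metric $h^\vee$. First I would unpack Definition \ref{sym-p} in local frames adapted to the good wild structure near a point of $H$ and verify that $\omega$ and $\omega^{-1}$ are bounded of weight zero with respect to the $h$-norm. I expect this to be the main technical obstacle, since one must control the skew-symmetric matrix of $\omega$ against the anisotropic wild growth of $h$ near the divisor; however, the pointwise isometry statement follows directly from the metric compatibility, and the harmonic/good-wild hypotheses are only used indirectly (via Theorem \ref{SBBM}).

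Once the isometry property is in place, $\Psi_\omega$ preserves the weight of sections and therefore matches the prolongations level-by-level: $\Psi_\omega(\mathcal{P}_{\mathbf{a}}^h E)=\mathcal{P}_{\mathbf{a}}^{h^\vee}E^\vee$ for every multi-index $\mathbf{a}$. Composing with the standard identification $\mathcal{P}_*^{h^\vee}E^\vee\simeq(\mathcal{P}_*^hE)^\vee$ in Mochizuki's theory (which is what makes $(E^\vee,-\theta^\vee,h^\vee)$ again a good wild harmonic bundle with a naturally identified prolongation) gives an isomorphism $\Psi_\omega:\mathcal{P}_*^hE\to(\mathcal{P}_*^hE)^\vee$ of filtered bundles, equivalently a morphism $\omega:\mathcal{P}_*^hE\otimes\mathcal{P}_*^hE\to\mathcal{P}_*^{(0)}(\mathcal{O}_X(\ast H))$. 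This extension is skew-symmetric because the smooth $\omega$ on $X\setminus H$ is, and perfect because $\Psi_\omega$ is an isomorphism of filtered bundles.

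It remains to verify compatibility with the Higgs field, i.e.\ $\Psi_\omega\circ\theta=-\theta^\vee\circ\Psi_\omega$ as maps of filtered bundles; this already holds on $X\setminus H$ by the skew-symmetry of $\omega$ with $\theta$ built into Definition \ref{sym-p}, and a morphism of filtered bundles is determined by its restriction to $X\setminus H$, so the identity propagates. Together with the polystability and vanishing conditions inherited from Theorem \ref{SBBM}, this yields the proposition.
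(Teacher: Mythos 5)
Your proposal is correct and follows essentially the same route as the paper: the polystability and vanishing conditions are quoted from the general Kobayashi--Hitchin results (Proposition \ref{mor}/Theorem \ref{SBBM}), and the pairing is extended by observing that the pointwise isometry $\Psi_\omega:(E,h)\to(E^\vee,h^\vee)$ built into Definition \ref{sym-p} preserves the growth conditions defining the prolongation, giving $\Psi_\omega:\mathcal{P}^h_*E\xrightarrow{\sim}\mathcal{P}^{h^\vee}_*E^\vee\simeq(\mathcal{P}^h_*E)^\vee$. The only difference is one of emphasis: the ``main technical obstacle'' you anticipate is vacuous, exactly as you then note, since the isometry is part of the definition of a symplectic structure, which is why the paper's own proof of this step is only two sentences.
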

 We show that the converse also holds.
 In section \ref{KH part}, we show how a good filtered Higgs bundle decomposes when it admits a perfect skew-symmetric pairing:
 \begin{proposition}[Proposition \ref{B4} and \ref{B5}]\label{intro prop2}
Let $(\mathcal{P}_*\mathcal{V},\theta$) be a $\mu_L$-polystable good filtered Higgs bundle equipped with perfect skew-symmetic pairing $\omega$ and satisfies the vanishing condition (\ref{vanish0}). Then there exist stable Higgs bundles $(\mathcal{P}_*\mathcal{V}_i^{(0)}, \theta_i^{(0)})$ $(i=1,\dots,p(0))$, $(\mathcal{P}_*\mathcal{V}_i^{(1)}, \theta_i^{(1)})$ $(i=1,\dots,p(1))$ and $(\mathcal{P}_*\mathcal{V}_i^{(2)}, \theta_i^{(2)})$ $(i=1,\dots,p(2))$ of degree 0 on $X$ such that the following holds.
\begin{itemize}
\item $(\mathcal{P}_*\mathcal{V}_i^{(0)}, \theta_i^{(0)})$ is equipped with a symmetric pairing $P_i^{(0)}$.
\item  $(\mathcal{P}_*\mathcal{V}_i^{(1)}, \theta_i^{(1)})$ is equipped with a skew-symmetric pairing $P_i^{(1)}$.
\item  $(\mathcal{P}_*\mathcal{V}_i^{(2)}, \theta_i^{(2)})$ $\centernot{\simeq}$ $(\mathcal{P}_*\mathcal{V}_i^{(2)}, -\theta_i^{(2)})^\vee$.
\item There exists positive integers $l(a,i)$ and an isomorphism 
\begin{align*}
(\mathcal{P}_*\mathcal{V}, \theta)\simeq\bigoplus_{i=1}^{p(0)}(\mathcal{P}_*\mathcal{V}_i^{(0)}, \theta_i^{(0)})\otimes\mathbb{C}^{2l(0,i)}\oplus&\bigoplus_{i=1}^{p(1)}(\mathcal{P}_*\mathcal{V}_i^{(1)}, \theta_i^{(1)})\otimes\mathbb{C}^{l(1,i)}\\
&\oplus \bigoplus_{i=1}^{p(2)}\bigg(\big((\mathcal{P}_*\mathcal{V}_i^{(2)}, \theta_i^{(2)})\otimes\mathbb{C}^{l(2,i)}\big)\oplus\big((\mathcal{P}_*\mathcal{V}_i^{(2)}, -\theta_i^{(2)})^\vee\otimes(\mathbb{C}^{l(2,i)})^\vee\big)\bigg).
\end{align*}
Under this isomorphism, $\omega$ is identified with the direct sum of $P_{i}^{(0)}\otimes \omega_{\mathbb{C}^{2l(0,i)}}$, $P_{i}^{(1)}\otimes C_{\mathbb{C}^{l(1,i)}}$ and $\widetilde{\omega}_{(E_i^{(2)}, \theta_i^{(2)})}\otimes C_{\mathbb{C}^{l(2,i)}}$
\item $(\mathcal{P}_*\mathcal{V}_i^{(a)},\theta_i^{(a)})$ $\centernot{\simeq}$ $(\mathcal{P}_*\mathcal{V}_j^{(a)}, \theta_j^{(a)})$ $(i\neq j)$ for a=0,1,2, and $(\mathcal{P}_*\mathcal{V}_i^{(2)}, \theta_i^{(2)})$ $\centernot{\simeq}$ $(\mathcal{P}_*\mathcal{V}_j^{(2)}, -\theta_j^{(2)})^\vee$ for any $i, j.$
\end{itemize}
Moreover, there exists a harmonic metric $h$ on $(\mathcal{V}, \theta)|_{X\setminus D}$ such that $\mathrm{(i)}$ h is adapted to $\mathcal{P}_*\mathcal{V}$,  $\mathrm{(ii)}$ it is compatible with $\omega.$
 \end{proposition}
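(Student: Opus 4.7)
The plan is representation-theoretic: decompose $(\mathcal{P}_*\mathcal{V},\theta)$ into isotypic components, show that $\omega$ induces an involution on the set of stable summands, and then analyze $\omega$ orbit by orbit, in analogy with the classical decomposition of skew forms on semisimple modules. First I would apply Theorem~\ref{SBBM} together with the Krull--Schmidt property for polystable good filtered Higgs bundles to write
\begin{equation*}
(\mathcal{P}_*\mathcal{V},\theta)\simeq\bigoplus_{j\in J}(\mathcal{P}_*\mathcal{U}_j,\tau_j)\otimes U_j,
\end{equation*}
where the $(\mathcal{P}_*\mathcal{U}_j,\tau_j)$ are pairwise non-isomorphic stable good filtered Higgs bundles of slope zero and the $U_j$ are multiplicity spaces. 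The pairing $\omega$ provides an isomorphism $\Psi_\omega:(\mathcal{P}_*\mathcal{V},\theta)\to(\mathcal{P}_*\mathcal{V}^\vee,-\theta^\vee)$; dualizing the decomposition and matching summands yields an involution $\sigma:J\to J$ characterized by $(\mathcal{P}_*\mathcal{U}_{\sigma(j)},\tau_{\sigma(j)})\simeq(\mathcal{P}_*\mathcal{U}_j^\vee,-\tau_j^\vee)$.

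Next I would classify the $\sigma$-orbits. For a fixed point $j=\sigma(j)$, Schur's lemma in the category of stable filtered Higgs bundles makes $\mathrm{Hom}\bigl((\mathcal{P}_*\mathcal{U}_j,\tau_j),(\mathcal{P}_*\mathcal{U}_j^\vee,-\tau_j^\vee)\bigr)$ a one-dimensional line, and transposition is an involution of this line with eigenvalue $\pm 1$; the two cases canonically label $j$ as type~(0) with a symmetric pairing $P_j^{(0)}$ or type~(1) with a skew-symmetric pairing $P_j^{(1)}$. Two-point orbits $\{j,\sigma(j)\}$ become type~(2); letting $p(a)$ count the orbits of each type gives the indexing in the statement. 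The same Schur argument shows that $\omega$ has no cross terms between different orbits, so it decomposes blockwise. On a type-(a) fixed point ($a\in\{0,1\}$) the block is of the form $P_j^{(a)}\otimes B_j$ for a non-degenerate bilinear form $B_j$ on $U_j$, and the skew-symmetry of $\omega$ forces $B_j$ to be symplectic when $a=0$ (so $\dim U_j=2l(0,i)$ is even) and symmetric when $a=1$ (a basis choice identifies it with $C_{\mathbb{C}^{l(1,i)}}$). On a two-point orbit, $\omega$ becomes a perfect pairing $U_j\otimes U_{\sigma(j)}\to\mathbb{C}$, forcing $\dim U_{\sigma(j)}=\dim U_j=:l(2,i)$ and producing the claimed hyperbolic summand with the standard form $\widetilde{\omega}_{(E_i^{(2)},\theta_i^{(2)})}\otimes C_{\mathbb{C}^{l(2,i)}}$. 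Assembling the blocks gives the displayed isomorphism.

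For the harmonic metric, each $(\mathcal{U}_j,\tau_j)|_{X\setminus H}$ admits by Theorem~\ref{SBBM} a pluri-harmonic metric $h_j$ adapted to $\mathcal{P}_*\mathcal{U}_j$ and unique up to a positive scalar. At each type-(0) or type-(1) fixed point I would rescale $h_j$ so that $P_j^{(a)}$ becomes $h_j$-compatible, which is possible because both ambiguities reduce to a single positive scalar; at each two-point orbit I would pick $h_j$ arbitrarily and let the duality $\Psi_{P_j}$ together with $h_j$ determine $h_{\sigma(j)}$. Choosing Hermitian inner products on each multiplicity space compatible with its bilinear form and taking the direct sum yields the desired $h$. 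The main obstacle I foresee is the twofold use of Schur rigidity: proving that morphisms of stable good filtered Higgs bundles of equal slope are either zero or isomorphisms and that the endomorphism algebra of a stable factor is $\mathbb{C}$, in the presence of the filtration and the possibly wild behaviour of $\theta$ along $H$. A secondary subtlety is to lift the blockwise pointwise compatibilities to compatibility of $\omega$ with $h$ as a morphism of filtered bundles on $(X,H)$ rather than only over $X\setminus H$, which I expect to follow from the adaptedness conclusion of Theorem~\ref{SBBM}.
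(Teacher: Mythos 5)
Your proposal is correct and follows essentially the same route as the paper: the canonical (Krull--Schmidt) decomposition into isotypic pieces, the observation that $\Psi_\omega$ permutes the stable factors by duality (your involution $\sigma$ is implicit in the paper's reduction to the two kinds of ``basic polystable objects''), the $\pm1$ dichotomy for self-dual stable factors via Schur rigidity (the paper's citation of \cite[Lemma 3.10]{M1} supplies exactly the rigidity you flag as the main obstacle), the blockwise identification of $\omega$ with $P\otimes\omega_{\mathbb{C}^{2k}}$, $P\otimes C_{\mathbb{C}^l}$, or the hyperbolic form, and the construction of the compatible adapted harmonic metric by rescaling the unique-up-to-positive-scalar metrics on the stable factors and tensoring with compatible metrics on the multiplicity spaces. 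No substantive difference from the paper's argument.
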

We give more details on the harmonic metric in Proposition \ref{B5}. Theorem \ref{intro main} is proved by combining Proposition \ref{intro prop1} and \ref{intro prop2}.
\subsection*{Relation to other works}
In \cite{qm1}, Li and Mochizuki studied harmonic bundles with an additional structure called real structure. A real structure is a holomorphic non-degenerate pairing of the given bundle such that the Higgs field is symmetric with it and the harmonic metric is compatible. Although they focused on the study of generically regular semisimple Higgs bundle, they also obtained the Kobayashi-Hitchin correspondence with symmetry.
\begin{theorem}[{\cite[Theorem 3.28]{qm1}}] Let $X$ be a compact Riemann surface and $D\subset X$ be a divisor. Then the following objects are equivalent on $(X,D)$.
\begin{itemize}
\item Wild harmonic bundles on $(X,D)$ with a real structure.
\item Polystable good filtered Higgs bundles of degree 0 equipped with a perfect symmetric pairing.
\end{itemize}
\end{theorem}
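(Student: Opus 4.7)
The plan is to mirror the architecture of the proof of Theorem \ref{intro main} outlined above, substituting symmetric pairings for skew-symmetric ones and specializing to the curve case (which eliminates the higher-codimension stratification issues on $X$ and simplifies the analysis near $D$). The proof naturally splits into a forward direction (prolongation preserves the pairing), a reverse direction (decompose the polystable filtered Higgs bundle according to the pairing), and a compatibility argument for the harmonic metric.

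For the forward direction, suppose $(E,\overline\partial_E,\theta,h)$ is a wild harmonic bundle on $X\setminus D$ equipped with a real structure $C$, i.e.\ a holomorphic non-degenerate symmetric pairing such that $\theta$ is self-adjoint with respect to $C$ and such that $h$ is compatible with $C$ in the appropriate sense. Applying Theorem \ref{SBBM}, the prolongation $(\mathcal{P}^h_{\ast}E,\theta)$ is a $\mu_L$-polystable good filtered Higgs bundle of degree $0$ satisfying the vanishing condition. Compatibility of $C$ with $h$ yields a bound $|C(s,t)|\le |s|_h|t|_h$ controlling the growth of $C$ across $D$, so $C$ extends to a morphism of filtered bundles $\mathcal{P}^h_{\ast}E\otimes\mathcal{P}^h_{\ast}E\to\mathcal{P}^{(0)}_{\ast}\mathcal{O}_X(\ast D)$; perfectness of the induced pairing follows from non-degeneracy of $C$ together with the prolongation-duality formula. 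This is the symmetric analogue of Proposition \ref{intro prop1}.

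For the reverse direction, let $(\mathcal{P}_{\ast}\mathcal{V},\theta)$ be polystable of degree $0$ with a perfect symmetric pairing $P$, inducing $\Psi_P:(\mathcal{P}_{\ast}\mathcal{V},\theta)\to(\mathcal{P}_{\ast}\mathcal{V}^\vee,\theta^\vee)$. Exactly as in Proposition \ref{intro prop2}, decompose $(\mathcal{P}_{\ast}\mathcal{V},\theta)$ into stable isotypic components and sort them by their behaviour under $\Psi_P$ into three classes: self-dual summands on which the inherited pairing on the multiplicity space is symmetric, self-dual summands on which it is skew-symmetric, and hyperbolic pairs formed by pairing a non-self-dual stable component with its dual. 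A short linear-algebra argument on multiplicity spaces identifies $P$ with the direct sum of standard symmetric, skew-symmetric, and hyperbolic forms tensored with the pairings on the stable pieces.

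The main obstacle --- the crux of the proof --- is selecting harmonic metrics on the stable components whose direct sum is compatible with $P$. For each self-dual stable component $(\mathcal{P}_{\ast}\mathcal{V}_i,\theta_i)$, Theorem \ref{SBBM} provides some harmonic metric $h_i^0$; transporting $h_i^0$ through $P_i$ and $\Psi_P$ produces a second pluri-harmonic metric on the same stable component, which by the uniqueness clause of Theorem \ref{HS} (inherited via Theorem \ref{SBBM}) must equal $c\cdot h_i^0$ for some $c>0$, so rescaling by $\sqrt{c}$ yields a harmonic metric genuinely compatible with $P_i$. For the hyperbolic pairs, choose any harmonic metric on one factor and transport it to the other via $\Psi_P$, making the resulting sum automatically $P$-compatible. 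Assembling these metrics and invoking the adaptedness clause of Theorem \ref{SBBM} gives a harmonic metric on $(\mathcal{V},\theta)|_{X\setminus D}$ adapted to $\mathcal{P}_{\ast}\mathcal{V}$ and compatible with $P$, producing the desired real structure on the wild harmonic bundle and completing the equivalence.
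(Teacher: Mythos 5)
Your proposal is correct and follows essentially the same route the paper uses for its skew-symmetric analogue (Proposition \ref{w-f} for prolongation of the pairing, the canonical decomposition into the three classes of basic polystable objects as in Proposition \ref{B4}, and the rescaling-by-uniqueness argument of the adapted harmonic metric as in Proposition \ref{B5}), which is also the architecture of the cited proof in \cite{qm1}. The only cosmetic slip is attributing the uniqueness-up-to-scalar of the adapted harmonic metric on a stable piece to Theorem \ref{HS} rather than to the non-compact statement (Proposition \ref{mor}, i.e.\ \cite[Corollary 13.6.2]{M2}).
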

Although they only proved for the Riemann surface case, generalization to higher dimensions is straightforward.\par
In \cite{S4}, Simpson established the one-on-one correspondence for reductive flat principal $G$-bundle and semistable $G$-Higgs bundle. Here, we assume $G$ to be a complex reductive algebraic Lie group. Therefore we can regard Theorem \ref{intro main2} as a little bit of a detailed version for $G=\mathrm{Sp}(2n,\mathbb{C})$. 

\subsubsection*{Acknowlegment}
The author thanks his supervisor Hisashi Kasuya for his constant support and many helpful advice. He also helped to improve the readability of this paper.
The author was supported by JST SPRING, Grant Number JPMJSP2138.
\section{Harmonic bundles with symplectic structure}\label{cpt sec}
\subsection{Skew-symmetric pairings of vector spaces}
Let $V$ be a complex vector space of dimension $n$. We fix a hermitian metric $h$ on $V$. Let $V^\vee$ be the dual of $V$. From a hermitian metric $h$ we have an anti-linear map:
\begin{equation*}
\Psi_h:V\to V^\vee
\end{equation*}
defined as $\Psi_h(u)(v):=h(v,u)$ for $u,v\in V.$ \par
 We have an induced hermitian metric $h^\vee$ on $V^\vee$ defined as 
 \begin{equation*}
 h^\vee(u^\vee,v^\vee):=h(\Psi_h^{-1}(v^\vee),\Psi_h^{-1}(u^\vee)).
 \end{equation*}
 Let $\omega$ be a non-degenerate skew-symmetric bilinear form on $V$. We obtain a linear map,
 \begin{equation*}
 \Psi_\omega:V\to V^\vee
  \end{equation*}
 defined as $\Psi_\omega(u)(v):=\omega(u,v)$. \par
 We have an induced skew-symmetric bilinear form $\omega^\vee$ on $V^\vee$ defined as,
\begin{equation*}
 \omega^\vee(u^\vee,v^\vee):=\omega(\Psi_\omega^{-1}(u^\vee),\Psi_\omega^{-1}(v^\vee)).
\end{equation*}
\begin{definition}
Let $(V,h)$ be a vector space with hermitian metric. Let $\omega$ be a non-degenerate skew-symmetric bilinear form on $V$. $\omega$ is compatible with $(V,h)$ if 
\begin{equation*}
\Psi_\omega:(V,h)\to (V^\vee,h^\vee)
\end{equation*}
is an isometry.
\end{definition}
The following Lemma was proved in \cite{qm1} without proof. We give the proof for convenience.
\begin{lemma}\label{m-s}
The following conditions are equivalent
\begin{itemize}
\item h is compatible with $\omega$.
\item $\Psi_{h^\vee}\circ\Psi_{\omega}=\Psi_{\omega^\vee}\circ\Psi_{h}.$
\item $\omega(u,v)=\overline{\omega^\vee(\Psi_h(u),\Psi_h(v))}$ for any $u,v\in V$. 
\end{itemize}
\end{lemma}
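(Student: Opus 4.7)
The plan is to introduce a single anti-linear operator $A := \Psi_h^{-1}\circ \Psi_\omega \colon V\to V$, whose defining property, read off directly from the definitions, is
\begin{equation*}
h(w,Au)=\Psi_h(Au)(w)=\Psi_\omega(u)(w)=\omega(u,w)\qquad\text{for all }u,w\in V.
\end{equation*}
I will then show that each of the three conditions in the lemma is equivalent to the single structural identity $A^2=-\mathrm{Id}_V$, which makes the triangle of equivalences automatic.

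To handle $(1)\Leftrightarrow A^2=-\mathrm{Id}_V$, I unfold the isometry condition. By the very definition of $h^\vee$, the quantity $h^\vee(\Psi_\omega u,\Psi_\omega v)$ equals $h(Av,Au)$; the defining property of $A$ followed by the skew-symmetry of $\omega$ then rewrites this as $h(Av,Au)=\Psi_\omega(u)(Av)=-\Psi_\omega(Av)(u)=-h(u,A^2v)$. So (1) reduces to $h(u,v)=-h(u,A^2v)$ for all $u,v$, which by non-degeneracy is exactly $A^2=-\mathrm{Id}_V$.

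For $(2)\Leftrightarrow A^2=-\mathrm{Id}_V$, I first establish two auxiliary identities by a direct computation with the definitions of the induced metric and pairing: $\Psi_{h^\vee}\circ\Psi_h$ coincides with the canonical injection $\iota\colon V\hookrightarrow V^{\vee\vee}$, whereas $\Psi_{\omega^\vee}\circ\Psi_\omega=-\iota$, the sign being forced by the skew-symmetry of $\omega$. Substituting $\Psi_{h^\vee}=\iota\circ\Psi_h^{-1}$ and $\Psi_{\omega^\vee}=-\iota\circ\Psi_\omega^{-1}$ into (2) reduces it to $\iota\circ A=-\iota\circ A^{-1}$, i.e.\ again $A^2=-\mathrm{Id}_V$.

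Finally, $(2)\Leftrightarrow(3)$ is direct: both sides of (2) are anti-linear maps $V\to V^{\vee\vee}$, so equality can be tested by evaluating at each $\Psi_h(v)\in V^\vee$. A short computation using the identities above shows the left side evaluates to $\overline{\omega(u,v)}$ and the right side to $\omega^\vee(\Psi_h(u),\Psi_h(v))$, and complex conjugation yields (3). The only real obstacle is bookkeeping the anti-linearity of $\Psi_h$ and the sign produced by the skew-symmetry of $\omega$; once $A^2=-\mathrm{Id}_V$ is identified as the common core of the three statements, the proof is purely formal.
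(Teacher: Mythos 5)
Your argument is correct, and every step checks out against the paper's conventions (in particular you correctly track the argument swap in the definition $h^\vee(u^\vee,v^\vee)=h(\Psi_h^{-1}(v^\vee),\Psi_h^{-1}(u^\vee))$ and the anti-linearity of $\Psi_h$). However, it is a genuinely different route from the paper's. The paper fixes a basis, records the representation matrices $H$, $(H^{-1})^T$, $\Omega^T$, $\Omega^{-1}$ of the four maps, translates each of the three conditions into a matrix identity, and observes that the three identities coincide. You instead work coordinate-free: you package everything into the single anti-linear endomorphism $A=\Psi_h^{-1}\circ\Psi_\omega$, characterized by $h(w,Au)=\omega(u,w)$, and show each condition is equivalent to $A^2=-\mathrm{Id}_V$, using the auxiliary identities $\Psi_{h^\vee}\circ\Psi_h=\iota$ and $\Psi_{\omega^\vee}\circ\Psi_\omega=-\iota$ (the sign coming from skew-symmetry), which I verified. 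What your approach buys is conceptual clarity and robustness: it exposes the common invariant content of the three conditions (the quaternion-like relation $A^2=-\mathrm{Id}$, the skew analogue of the real structure $\kappa^2=\mathrm{Id}$ in the symmetric case of Li--Mochizuki), it avoids any bookkeeping of transposes and conjugates in a chosen basis, and it transfers verbatim to the bundle-theoretic setting used later in the paper. The paper's matrix computation is shorter to write down but leaves the final equivalence of the three matrix identities as an unexplained check, whereas in your formulation the equivalence is automatic once each condition is reduced to $A^2=-\mathrm{Id}_V$.
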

\begin{proof}
For a matrix $A$, we denote the transpose of it as $A^T$.
Let $<e_1,\dots,e_n>$ be a basis of $V$ and $<e^\vee_1,\dots,e^\vee_n>$  be the dual basis of $V^\vee$. Let $H:=(h(e_i,e_j))_{1\leq i,j\leq n}$, $\Omega:=(\omega(e_i,e_j))_{1\leq i,j\leq n}.$ The representation matrix of $\Psi_h$ is $H$, $\Psi_{h^\vee}$ is $(H^{-1})^{T}$, $\Psi_\omega$ is $\Omega^{T}$ and $\Psi_{\omega^\vee}$ is $\Omega^{-1}$.\par
When $h$ is compatible with $\omega$, then $(H^{-1})^T=\Omega^{-1} H\overline{\Omega^{-1}}^T$ stands. $\Psi_{h^\vee}\circ\Psi_{\omega}=\Psi_{\omega^\vee}\circ\Psi_{h}$ is equivalent to $(H^{-1})^T\overline{\Omega^T}=\Omega^{-1} H$. The third condition is equivalent to the equality $\Omega^T=\overline{H^{T}\Omega^{-1}H}$. Hence the three conditions are equivalent.
\end{proof}
\subsection{Harmonic bundles with symplectic structure}
Let  $X$ be a complex manifold and $(E,\overline\partial_E,\theta)$ be a Higgs bundle on $X$.
\begin{definition} Let $(E^\vee,\overline\partial_{E^\vee})$ be the dual holomorphic bundle of $(E,\overline\partial_E)$.  A skew-symmetric pairing $\omega$ of $E$ is a global holomorphic section of $E^\vee\otimes E^\vee$
such that $\omega(u,v)=-\omega(v,u)$ holds for any section  $u,v$ of $E$. We say that $\omega$ is perfect if the induced morphism $\Psi_\omega:E\to E^\vee$ is an isomorphism.
\end{definition} 
 We note that when a holomorphic bundle has a perfect symplectic pairing, the rank of it is even.
\begin{definition} A skew-symmetric pairing $\omega$ of the Higgs bundle $(E,\overline\partial_E,\theta)$ is a  skew-symmetric pairing of $(E,\overline\partial_E)$ such that $\omega(\theta\otimes\mathrm{Id})=-\omega(\mathrm{Id}\otimes\theta)$ holds. We call $\omega$ perfect if it is a perfect skew-symmetric pairing of  $(E,\overline\partial_E)$. \end{definition}
A skew-symmetric pairing $\omega$ for  $(E,\overline\partial_E,\theta)$ induces a morphism $\Psi_\omega:(E,\theta)\to (E^\vee,-\theta^\vee)$. Here $\theta^\vee$ is the Higgs field of $E^\vee$ induced from $\theta$.
\begin{remark}
A Higgs bundle with a skew-symmetric pairing is called $\mathrm{Sp}(2n,\mathbb{C})$-Higgs bundle in \cite{GGM}.
\end{remark}
\begin{definition}\label{sym-p}
A symplectic structure $\omega$ of the harmonic bundle $(E,\overline\partial_E,\theta,h)$ is a perfect skew-symmetric pairing of $(E,\overline\partial_E,\theta)$ such that  $h_{|P}$ is compatible with $\omega_{|P}$ for any $P\in X.$ 
 \end{definition}
 \subsection{Harmonic metrics on Principal $G$-bundles}
Let $G$ be a Lie group. In this section, we briefly review harmonic metrics on the principal $G$-bundle. Let $X$ be a Riemannian manifold.
 \begin{definition} Let $P\to X$ be a principal $G$-bundle and $\nabla$ be a flat connection on it. $\nabla$ is called reductive if the corresponding representation $\rho:\pi_1(M)\to G$ is semisimple.
 \end{definition}
 Let $K\subset G$ be a maximal compact subgroup and let $P_K$ be a $K$-reduction of $P$. When $P$ admits a flat connection $\nabla$, to give a  $K$-reduction $P_K$ is equivalent to give a $\pi_1(X)$-equivalent smooth map
 \begin{equation*}
 f:\widetilde{X}\to G/K.
 \end{equation*} 
 Here $\widetilde{X}$ is the universal covering of $X$.\par
 The following result was proved by Donaldson \cite{Do} (when $X$ is a compact Riemann surface and $G=\mathrm{SL}(2,\mathbb{C})$), Corlette \cite{Co} (when $X$ is compact and for semisimple Lie groups) and Simpson \cite{S3} (when $X$ is compact and for algebraic reductive groups).
 \begin{theorem}[\cite{Co,Do,S3}]
Suppose $X$ to be compact. Let $P\to X$ be a principal $G$-bundle with a flat connection $\nabla$. Then there exists a  $\pi_1(X)$-equivalent harmonic map $f:\widetilde{X}\to G/K$ if and only if $\nabla$ is reductive.
 \end{theorem}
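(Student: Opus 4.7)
My plan is to handle the two directions separately using the formalism of the Eells--Sampson/Corlette heat flow for equivariant maps into a non-positively curved target. Write $\mathcal{F}$ for the space of $\pi_1(X)$-equivariant smooth maps $f:\widetilde{X}\to G/K$ and consider the energy functional
\[
E(f) = \int_X \|df\|^2\, dV_X,
\]
whose critical points in $\mathcal{F}$ are exactly the equivariant harmonic maps. The central geometric input throughout is that $G/K$, as a Riemannian symmetric space of non-compact type, is a complete \emph{CAT(0)} space.

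For the direction \emph{harmonic $\Rightarrow$ reductive}, I would argue by contrapositive. If the Zariski closure of $\rho(\pi_1(X))$ is contained in a proper parabolic subgroup $Q\subset G$, then $Q$ stabilises a point at infinity $\xi\in\partial_\infty(G/K)$, and the associated Busemann function $b_\xi$ descends to a $\pi_1(X)$-invariant object. Translating $f$ toward $\xi$ along the one-parameter subgroup generated by the unipotent radical of $Q$ produces a variation $f_t$ along which, by the convexity of distance on a CAT(0) target, $E(f_t)$ is strictly decreasing; this contradicts the first-variation equation at a harmonic $f$.

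For the direction \emph{reductive $\Rightarrow$ harmonic}, I would run the equivariant heat flow
\[
\partial_t f_t = \tau(f_t),\qquad f_0 = f,
\]
where $\tau(\cdot)$ is the tension field. Short-time existence is standard, and long-time existence follows from the Eells--Sampson Bochner identity: since $G/K$ has non-positive sectional curvature, the energy density satisfies $(\partial_t - \Delta)\,e(f_t)\leq 0$, so the parabolic maximum principle keeps $e(f_t)$ uniformly bounded on every finite time interval and we obtain a smooth flow on $[0,\infty)$.

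The main obstacle, and the technical heart of the proof, is to show that the flow does not drift to infinity in $G/K$ as $t\to\infty$, so that a convergent subsequence may be extracted. Fixing a base-point $\tilde{x}_0\in\widetilde{X}$, this amounts to a uniform $C^0$ estimate on the displacement $d\bigl(f_t(\tilde{x}_0), f_0(\tilde{x}_0)\bigr)$. This is exactly where reductivity must be used: if the bound failed, one rescales and passes to a subsequential limit to obtain a non-trivial $\pi_1(X)$-equivariant map into the asymptotic cone of $G/K$, equivalently a $\rho$-fixed point in the spherical Tits building at infinity, whose stabiliser is a proper parabolic subgroup containing $\rho(\pi_1(X))$, contradicting reductivity. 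This is Corlette's key lemma. Once the $C^0$ bound is in hand, Arzel\`a--Ascoli combined with the energy-density bound and elliptic regularity yields a $C^\infty_{\mathrm{loc}}$ subsequential limit $f_\infty$, which is equivariant and harmonic. Simpson's extension to complex reductive algebraic $G$ follows the same blueprint, using the Cartan involution to single out a maximal compact subgroup and reducing to the semisimple case on the derived subgroup.
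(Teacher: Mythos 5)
The paper itself offers no proof of this statement --- it is quoted directly from Corlette, Donaldson and Simpson --- so the only question is whether your reconstruction of the literature argument is sound. Your overall strategy (equivariant Eells--Sampson heat flow into the non-positively curved target $G/K$, long-time existence from the Bochner inequality, and a properness estimate to prevent escape to infinity) is indeed the strategy of \cite{Co}. But there is one genuine error that infects both directions: you identify ``$\rho$ is not reductive'' with ``$\rho(\pi_1(X))$ is contained in a proper parabolic $Q$'', equivalently ``$\rho$ fixes a point $\xi\in\partial_\infty(G/K)$''. These are not equivalent. A semisimple representation can perfectly well land in a proper parabolic: any subgroup of a Levi factor does, e.g.\ a maximal torus inside a Borel, or the trivial representation. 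Non-reductivity means the image lies in some proper parabolic but in \emph{no} Levi subgroup of it. As written, your contrapositive for ``harmonic $\Rightarrow$ reductive'' would show that no equivariant harmonic map exists whenever the image lies in a proper parabolic --- yet for the trivial representation the constant map is harmonic, so the claimed \emph{strict} decrease of $E(f_t)$ under translation toward $\xi$ cannot hold at the stated level of generality. (There is also an unaddressed equivariance problem: geodesic translation toward $\xi$ commutes with the unipotent radical of $Q$, but intertwines the full parabolic only up to the character by which $Q$ rescales the Busemann function, so $f_t$ need not remain $\rho$-equivariant.)

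The same confusion reappears at what you correctly call the technical heart. In the properness step you derive a contradiction from the mere existence of a $\rho$-fixed point at infinity; but a reductive $\rho$ may fix such a point, and Corlette's argument must therefore do more: when $\xi$ is fixed one does not stop, but shows that $\rho$ preserves the parallel set of $\xi$ (a totally geodesic subspace splitting off a flat factor, whose semisimple part is the symmetric space of the Levi quotient), restricts the problem there, and inducts on the dimension of $G$. Likewise the correct converse direction uses that $b_\xi\circ f$ is subharmonic on the compact base, hence constant, to force $f$ into a horosphere and then into the Levi's symmetric space, with non-reductivity (image in no Levi) supplying the final contradiction. So your outline captures the right analytic machinery, but both the escape-to-infinity dichotomy and the converse need the finer alternative ``fixes $\xi$ and preserves a Levi / fixes $\xi$ and preserves no Levi'' rather than ``fixes a point at infinity or not''.
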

From now on, we assume $X$ to be a compact K\"ahler manifold. Let $\pi : G\to \mathrm{GL}(V)$ be a linear representation. We briefly recall how to induce a Higgs bundle structure to $E:= P\times^\pi V$ from a principle $G$-bundle with a reductive flat connection $\nabla$. See \cite{S3} for details. Let $D$ be the induced flat connection of $E$. The harmonic map $f$ induces a metric $h$ on $E$. Let $D=D_h+\phi$ be the decomposition such that $D_h$ is the metric connection and $\phi$ is self-adjoint w.r.t. $h$. Let $D^{0,1}_h$ be the (0,1)-part of $D_h$ and $\theta$ be the (1,0)-part of $\phi$. The harmonicity of $f$ implies that  $D^{0,1}_h\circ D^{0,1}_h=0$ and $D^{0,1}_h\theta=0$. Hence we obtain a harmonic bundle $(E,D^{0,1}_h,\theta,h)$.
 \subsection{Harmonic bundles and  Principal $\mathrm{Sp}(2n,\mathbb{C})$-Bundles}
 Throughout this section, we assume $X$ to be a compact K\"ahler manifold.  In this section, we prove the following:
 \begin{theorem}\label{H-P}
 Let $X$ be a compact K\"ahler manifold. The following objects are equivalent on $X$.
   \begin{itemize}
   \item Polystable Higgs bundle of rank 2n with vanishing Chern classes equipped with a perfect skew-symmetric pairing.
 \item Harmonic bundle of rank 2n equipped with a symplectic structure.
 \item Principal $\mathrm{Sp}(2n,\mathbb{C})$-bundle with a reductive flat connection.
  \end{itemize}
 \end{theorem}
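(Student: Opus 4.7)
The equivalence of the first two items is a consequence of Theorem \ref{intro main} specialized to the case $H = \emptyset$ (so that ``good filtered'' simply means a holomorphic Higgs bundle and the perfect skew-symmetric pairing on the filtered side reduces to a perfect skew-symmetric pairing on the Higgs bundle). Hence the whole work of the proof is to establish the equivalence between the second and the third items, and this is what I plan to do by unwinding the Cartan decomposition $\mathfrak{sp}(2n,\mathbb{C}) = \mathfrak{sp}(2n) \oplus \mathfrak{p}$ on both sides.

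For the direction from harmonic bundle with symplectic structure to principal $\mathrm{Sp}(2n,\mathbb{C})$-bundle with reductive flat connection, the plan is as follows. Let $(E,\overline\partial_E,\theta,h,\omega)$ be given; the perfect skew-symmetric form $\omega$ reduces the frame bundle $P_E$ to an $\mathrm{Sp}(2n,\mathbb{C})$-subbundle $P_{E,\mathrm{Sp}(2n,\mathbb{C})}$ of frames in which $\omega$ is the standard symplectic form. I first check that the connection $\nabla_h = \partial_h + \overline\partial_E + \theta + \theta^{\dagger}_h$ restricts to a connection on this subbundle. The condition $\overline\partial_E \omega = 0$ holds because $\omega$ is holomorphic; the condition $\omega(\theta \cdot , \cdot) + \omega(\cdot , \theta \cdot) = 0$ is the skew-symmetry of $\theta$ with respect to $\omega$; the compatibility of $h$ with $\omega$ then forces the same two equations for $\partial_h$ (using uniqueness of the Chern connection) and for $\theta^{\dagger}_h$ (by adjunction). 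The $h$-compatibility of $\omega$ furthermore provides an $\mathrm{Sp}(2n)$-reduction $P_{E,\mathrm{Sp}(2n)}$ of $P_{E,\mathrm{Sp}(2n,\mathbb{C})}$, corresponding to an equivariant map $f_h : \widetilde{X} \to \mathrm{Sp}(2n,\mathbb{C})/\mathrm{Sp}(2n)$. Because $h$ is pluri-harmonic, standard results recalled in the paper imply that $f_h$ is harmonic, so by Corlette--Donaldson--Simpson the flat connection $\nabla_h$ on $P_{E,\mathrm{Sp}(2n,\mathbb{C})}$ is reductive.

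For the converse direction, let $P \to X$ be a principal $\mathrm{Sp}(2n,\mathbb{C})$-bundle with reductive flat connection $\nabla$, let $i : \mathrm{Sp}(2n,\mathbb{C}) \hookrightarrow \mathrm{GL}(2n,\mathbb{C})$ be the standard representation, and let $E = P \times^i \mathbb{C}^{2n}$. Applying the construction recalled from \cite{S3} gives a harmonic bundle $(E, D^{0,1}_h, \phi^{1,0}, h)$ with decomposition $D = D_h + \phi$. The standard symplectic form $\omega_0$ on $\mathbb{C}^{2n}$, being $\mathrm{Sp}(2n,\mathbb{C})$-invariant, descends to a smooth section $\omega$ of $E^\vee \otimes E^\vee$, skew-symmetric and non-degenerate, and compatible with $h$ because $h$ is induced by an $\mathrm{Sp}(2n)$-reduction. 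Since $D$ is a connection in a principal $\mathrm{Sp}(2n,\mathbb{C})$-bundle, it preserves $\omega$, so $D\omega = 0$. I then plan to split this identity into its $\mathfrak{sp}(2n)$-part (killed by $D_h$) and its $\mathfrak{p}$-part (carried by $\phi$), using that $D_h$ is a metric connection and $\phi$ is $h$-self-adjoint and that $\omega$ is $h$-compatible: this yields $D_h \omega = 0$ and $\omega(\phi \cdot , \cdot) + \omega(\cdot , \phi \cdot) = 0$ separately. Taking types, the former gives $D^{0,1}_h \omega = 0$, i.e.\ $\omega$ is holomorphic, and the latter gives $\omega(\theta \cdot , \cdot) + \omega(\cdot , \theta \cdot) = 0$ for $\theta = \phi^{1,0}$, which is the compatibility of $\omega$ with the Higgs field. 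Hence $\omega$ is a symplectic structure on the harmonic bundle.

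The main obstacle I expect is the book-keeping in the step where the single equation $D\omega = 0$ is split into the two equations $D_h \omega = 0$ and the skew-symmetry of $\phi$ with respect to $\omega$. This requires being careful with the characterization, via Lemma \ref{m-s}, that $\omega$-preserving elements of $\mathfrak{gl}(2n,\mathbb{C})$ split correctly into $h$-skew-Hermitian and $h$-Hermitian parts, so that the corresponding $\mathfrak{sp}(2n) \oplus \mathfrak{p}$ decomposition of $\mathfrak{sp}(2n,\mathbb{C})$ inside $\mathfrak{gl}(2n,\mathbb{C})$ is compatible with the Cartan decomposition used to define $D_h$ and $\phi$. Once this identification is in place, the forward and backward constructions are mutually inverse up to the usual identifications, which finishes the proof.
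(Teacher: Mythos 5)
Your proposal is correct and follows essentially the same route as the paper: the first equivalence is delegated to the skew-symmetric Kobayashi--Hitchin correspondence (Corollary \ref{KH cpt}), and the second is proved by reducing the frame bundle to $\mathrm{Sp}(2n,\mathbb{C})$ via $\omega$, obtaining the $\mathrm{Sp}(2n)$-reduction from the $h$-compatibility, invoking Corlette--Donaldson--Simpson, and conversely descending the invariant symplectic form to the associated bundle and splitting $D\omega=0$ along $D=D_h+\phi$. The only difference is presentational: the paper carries out these steps by explicit matrix identities in local symplectic frames (Propositions \ref{one}--\ref{four} and Lemma \ref{w-k}), whereas you phrase them invariantly via the Cartan decomposition, which amounts to the same verification.
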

 \begin{proof} The equivalence of the first two objects is a consequence of Corollary \ref{KH cpt}. We give the proof of the equivalence of the last two objects in the end of the section.
  \end{proof}
   To prove Theorem \ref{H-P}, we prepare some Propositions.
 \begin{proposition}\label{one}
 Let $(E,\overline\partial_E)$ be a holomorphic bundle of rank 2n on $X$  and $\omega$ be a perfect skew-symmetric pairing of it.
 Let $P_E\to X$ be the principal $\mathrm{GL}(2n,\mathbb{C})$-bundle associated to $E$. Then $P_E$ has a reduction to $P_{E,\mathrm{Sp}(2n,\mathbb{C})}$ such that $P_{E,\mathrm{Sp}(2n,\mathbb{C})}\to X$ is a principal $\mathrm{Sp}(2n,\mathbb{C})$-bundle.
  \end{proposition}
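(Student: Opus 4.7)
The plan is to realize $P_{E,\mathrm{Sp}(2n,\mathbb{C})}$ as the bundle of \emph{symplectic frames} of $(E,\omega)$ and verify that this gives a genuine principal $\mathrm{Sp}(2n,\mathbb{C})$-subbundle of $P_E$. Concretely, fix the standard symplectic form $\omega_0$ on $\mathbb{C}^{2n}$, so that $\mathrm{Sp}(2n,\mathbb{C})=\{g\in\mathrm{GL}(2n,\mathbb{C})\mid g^{\ast}\omega_0=\omega_0\}$, and recall that $P_E=\bigsqcup_{x\in X}\mathrm{Iso}(\mathbb{C}^{2n},E_x)$. Define
\begin{equation*}
P_{E,\mathrm{Sp}(2n,\mathbb{C})}:=\bigsqcup_{x\in X}\bigl\{\phi\in\mathrm{Iso}(\mathbb{C}^{2n},E_x)\,\bigm|\,\phi^{\ast}\omega_x=\omega_0\bigr\}.
\end{equation*}
This is the candidate subset; everything that remains is to check that it is a principal $\mathrm{Sp}(2n,\mathbb{C})$-bundle in a way compatible with the inclusion into $P_E$.

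First I would verify the fiberwise structure. Since $\omega$ is perfect, each fiber $(E_x,\omega_x)$ is a finite-dimensional symplectic vector space of dimension $2n$, so by the linear Darboux (symplectic normal form) theorem there exists an isomorphism $\phi_x:\mathbb{C}^{2n}\to E_x$ with $\phi_x^{\ast}\omega_x=\omega_0$. Hence each fiber of $P_{E,\mathrm{Sp}(2n,\mathbb{C})}$ is non-empty. The group $\mathrm{Sp}(2n,\mathbb{C})$ acts on the right by precomposition, and given two symplectic frames $\phi,\phi'$ over $x$ the element $g:=\phi^{-1}\circ\phi'\in\mathrm{GL}(2n,\mathbb{C})$ satisfies $g^{\ast}\omega_0=\omega_0$, hence $g\in\mathrm{Sp}(2n,\mathbb{C})$; thus the action is free and transitive on each fiber.

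Next, I would establish local triviality. Around any $x_0\in X$, pick a local holomorphic frame $e_1,\dots,e_{2n}$ of $E$ on an open neighborhood $U$; linear Darboux applied pointwise produces a symplectic frame at $x_0$, and a small perturbation shows this can be done holomorphically (or at least smoothly, depending on which category one works in) on a perhaps shrunken neighborhood by applying a Gram--Schmidt-type symplectic orthonormalization process to $e_1,\dots,e_{2n}$ with respect to the non-degenerate skew form $\omega$. This yields a local section of $P_{E,\mathrm{Sp}(2n,\mathbb{C})}\to X$, and combined with the free transitive fiberwise action of $\mathrm{Sp}(2n,\mathbb{C})$ this gives a local trivialization $U\times\mathrm{Sp}(2n,\mathbb{C})\xrightarrow{\sim}P_{E,\mathrm{Sp}(2n,\mathbb{C})}|_U$.

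The one step that requires a little care, rather than being pure formalism, is the symplectic Gram--Schmidt argument that gives local sections; everything else is routine. Since $\omega$ is a smooth (in fact holomorphic) non-degenerate skew form, the standard inductive construction (pick $e_1$ with $\omega(e_1,e_2)=1$ for some $e_2$, split off the symplectic plane $\langle e_1,e_2\rangle$ and recurse on its symplectic complement) runs in families and depends smoothly (resp.\ holomorphically) on the basepoint, so local sections exist with the required regularity. This finishes the reduction of structure group to $\mathrm{Sp}(2n,\mathbb{C})$ and completes the proof.
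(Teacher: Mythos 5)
Your proposal is correct and follows essentially the same route as the paper: the heart of both arguments is a holomorphic symplectic Gram--Schmidt procedure producing local frames in which $\omega$ takes the standard form, from which the reduction of the structure group to $\mathrm{Sp}(2n,\mathbb{C})$ follows. The paper phrases this via transition functions rather than via the bundle of symplectic frames, but the two formulations are equivalent, and your explicit inductive splitting into symplectic planes is exactly the normalization the paper carries out.
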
      
 \begin{proof}
To prove the claim, it is enough to prove that there exists an open covering $\{U_i\}_{i\in\Lambda}$ and a family of section $\{(e_{k,i})^{2n}_{k=1}\}_{i\in\Lambda}$ of $E$ such that
 \begin{itemize}
 \item $(e_{k,i})^{2n}_{k=1}$ is a frame of $E$ on $U_i$,
 \item The family of transition function $\{g_{ij}\}_{i,j\in\Lambda}$ associated to $\{(e_{k,i})^{2n}_{k=1}\}_{i\in\Lambda}$ takes value in $\mathrm{Sp}(2n,\mathbb{C})$.
  \end{itemize}
  To show such an open covering and frames exists, we only have to show that there exists an open covering $\{U_i\}_{i\in\Lambda}$ of $X$ and on each $U_i$, we have a  frame $(e_{k,i})^{2n}_{k=1}$ of $E$  such that w.r.t $(e_{k,i})^{2n}_{k=1}$, $\omega|_{U_i}$ has the form
  \begin{equation*}
\omega|_{U_i}=\sum^{n}_{k=1}\big(e^\vee_{k,i}\otimes e^\vee_{k+n,i}-e^\vee_{k+n,i}\otimes e^\vee_{k,i}\big).
  \end{equation*} 
  Here, $e^\vee_{k,i}$ is the dual frame of $e_{k,i}$.
We note that 
 \begin{equation*}
 \begin{pmatrix} 
  \omega|_{U_i}(e_{1,i},e_{1,i}) & \dots  & \omega|_{U_i}(e_{1,i},e_{2n,i}) \\
    \vdots &  \ddots & \vdots \\
 \omega|_{U_i}(e_{2n,i},e_{1,i})  & \dots  & \omega|_{U_i}(e_{2n,i},e_{2n,i})
\end{pmatrix}
=
\mathcal{J}_n:=
 \begin{pmatrix} 
 O  & \huge{I_n}  \\
 \huge{-I_n}  & O
\end{pmatrix}.
 \end{equation*}
Here $I_n$ is the $n\times n$ identity matrix. Once we showed such frames exist, then the transition functions obiously take value in $\mathrm{Sp}(2n,\mathbb{C})$. \par
We now prove that such frames exist around any $P\in X$. Let $U_P$ be an open neighborhood of $P$ and $(e_{k})^{2n}_{k=1}$ be a  frame of $E$ on $U_P$. Since $\omega$ is perfect, there exists a $e_k(k\neq 1)$ such that $\omega(e_1,e_k)|_{P}\neq 0$. We may shrink $U_P$ so that $\omega(e_1,e_k)$ does not take $0$ in $U_P$. We may also permute $(e_{k})^{2n}_{k=1}$ so we can assume $\omega(e_1,e_{n+1})$ does not take $0$ in $U_P$. Under this assumption, we construct a new frame $(e'_{k})^{2n}_{k=1}$ as
\begin{align*}
e'_1&:=e_1,\\
e'_{n+1}&:=-\frac{e_1}{\omega(e_1,e_{n+1})},\\
e'_{k}:=e_k-\omega(e_k,e'_{n+1})e&'_1+\omega(e_k,e'_1)e'_{n+1}(k: otherwise).
\end{align*}
By direct calculation, we can check $\omega(e'_1,e'_{n+1})=1$ and $\omega(e'_k,e'_1)=\omega(e'_k,e'_{n+1})=0 (k\neq 1, n+1)$. It is easy to see that $(e'_{k})^{2n}_{k=1}$ is actually a frame. \par
By the same argument as above for $e'_{2}$, we can assume that $\omega(e'_2, e'_{n+2})$  does not take 0 in $U_P$. We construct a new frame $(e^{''}_{k})^{2n}_{k=1}$ as
\begin{align*}
e^{''}_1&:=e'_1,\\
e^{''}_{n+1}&:=e^{'}_{n+1},\\
e^{''}_2&:=e'_2,\\
e^{''}_{n+2}&:=-\frac{e'_2}{\omega(e'_2,e'_{n+2})},\\
e^{''}_{k}:=e'_k-\omega(e'_k,e^{''}_{n+2})e&^{''}_2+\omega(e'_k,e^{''}_2)e^{''}_{n+2}(k:otherwise).
\end{align*}
By direct calculation, we can check $\omega(e^{''}_i,e^{''}_{n+i})=1(i=1,2)$ and $\omega(e^{''}_k,e^{''}_i)=\omega(e^{''}_k,e^{''}_{n+i})=0 (i=1,2, k\neq 1,2, n+1,n+2)$. Continuing this procedure, we finally  obtain a frame $(\widetilde{e}_{k})^{2n}_{k=1}$ on $U_P$ such that 
 \begin{equation*}
 \begin{pmatrix} 
  \omega|_{U_P}(\widetilde{e}_{1},\widetilde{e}_{1}) & \dots  & \omega|_{U_P}(\widetilde{e}_{1},\widetilde{e}_{2n}) \\
    \vdots &  \ddots & \vdots \\
 \omega|_{U_P}(\widetilde{e}_{2n},\widetilde{e}_{1})  & \dots  & \omega|_{U_P}(\widetilde{e}_{2n},\widetilde{e}_{2n})
\end{pmatrix}
=\mathcal{J}_n.
 \end{equation*}
 We can construct such a frame around for arbitrary $P\in X$. Hence we proved the claim.
\end{proof}
We set $\mathrm{Sp}(2n):=\mathrm{Sp}(2n,\mathbb{C})\cap\mathrm{U}(2n)$. Here $\mathrm{U}(2n)$ is the set of unitary matrices. $\mathrm{Sp}(2n)$ is a maximal compact subgroup of $\mathrm{Sp}(2n,\mathbb{C}).$
   \begin{proposition}\label{two}
 Let $(E,\overline\partial_E,\theta,h)$ be a harmonic bundle of rank 2n on $X$ and $\omega$ be a symplectic structure of it. Then the associated principal $\mathrm{Sp}(2n,\mathbb{C})$-bundle $P_{E,\mathrm{Sp}(2n,\mathbb{C})}$ admits a reductive flat connection $\nabla$.
  \end{proposition}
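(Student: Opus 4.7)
The plan is to produce $\nabla$ as the restriction to $P_{E,\mathrm{Sp}(2n,\mathbb{C})}$ of the canonical flat connection $\nabla_h=\partial_h+\overline{\partial}_E+\theta+\theta_h^{\dagger}$ of the harmonic bundle, and to deduce reductivity from the pluri-harmonicity of $h$ via the theorem of Corlette--Donaldson--Simpson recalled earlier.

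First I would verify that $\omega$ is parallel for $\nabla_h$, viewed as a global section of $E^\vee\otimes E^\vee$ with the induced connection. Decomposing $\nabla_h$ into its four summands, the vanishing of $\nabla_h\omega$ splits into four statements to be checked separately. The piece $\overline{\partial}_E\omega=0$ is immediate because $\omega$ is holomorphic. The Higgs piece vanishes by the defining skew-symmetry $\omega(\theta\otimes\mathrm{Id})+\omega(\mathrm{Id}\otimes\theta)=0$. The remaining identities $\partial_h\omega=0$ and $\theta_h^{\dagger}\omega=0$ are equivalent to the statement that $\Psi_\omega\colon(E,\overline{\partial}_E,\theta,h)\to(E^\vee,\overline{\partial}_{E^\vee},-\theta^\vee,h^\vee)$ intertwines the $(1,0)$-Chern connections and the Hermitian adjoints of the Higgs fields on either side; this is in turn a consequence of Lemma \ref{m-s} together with the functoriality of the Chern connection and of the Hermitian adjoint, since by the pointwise compatibility of $\omega$ with $h$ the map $\Psi_\omega$ is a holomorphic isometry from $(E,h)$ onto $(E^\vee,h^\vee)$.

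Once $\nabla_h\omega=0$ is in hand, parallel transport by $\nabla_h$ preserves $\omega$, so $\nabla_h$ restricts to a flat connection $\nabla$ on $P_{E,\mathrm{Sp}(2n,\mathbb{C})}$. For reductivity, the compatibility of $\omega$ with $h$ allows one to arrange the Darboux frames of Proposition \ref{one} to be simultaneously $h$-orthonormal (a pointwise linear-algebra fact reducing again to Lemma \ref{m-s}); this yields an $\mathrm{Sp}(2n)$-reduction $P_{E,\mathrm{Sp}(2n)}\subset P_{E,\mathrm{Sp}(2n,\mathbb{C})}$, and hence an equivariant map $f_h\colon\widetilde{X}\to\mathrm{Sp}(2n,\mathbb{C})/\mathrm{Sp}(2n)$. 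Because $\nabla_h$ preserves the $\mathrm{Sp}(2n,\mathbb{C})$-structure and $h$ is pluri-harmonic, $f_h$ is harmonic, so the theorem of Corlette--Donaldson--Simpson forces $\nabla$ to be reductive.

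The main obstacle I anticipate is the verification of $\partial_h\omega=0$ and $\theta_h^{\dagger}\omega=0$: pointwise compatibility of $\omega$ with $h$ is only algebraic data at each fibre, and one has to ensure that it globalises to a differential statement involving the Chern connection and the adjoint Higgs field. The harmonic-bundle-isomorphism viewpoint through $\Psi_\omega$ sidesteps explicit computation, but a direct matrix argument in the spirit of Lemma \ref{m-s} would also work.
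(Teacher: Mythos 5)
Your proposal is correct, and it reaches the same two milestones as the paper's proof (first, that $\nabla_h$ descends to a flat connection on $P_{E,\mathrm{Sp}(2n,\mathbb{C})}$; second, that the $h$-induced $\mathrm{Sp}(2n)$-reduction gives a harmonic equivariant map, whence reductivity by Corlette--Donaldson--Simpson). The difference lies in how the first milestone is verified. The paper works entirely in the Darboux frames of Proposition \ref{one}, where $\omega$ has constant matrix $\mathcal{J}_n$, and checks term by term that the connection form $A_i=h_i^{-1}\partial h_i+\theta+h_i^{-1}\overline{\theta^T}h_i$ is $\mathfrak{sp}(2n,\mathbb{C})$-valued: the relation $\theta^T\mathcal{J}_n=-\mathcal{J}_n\theta$ handles $\theta$, and Lemma \ref{m-s} is used to show $h_i\in\mathrm{Sp}(2n,\mathbb{C})$, which then handles both $\theta_h^\dagger$ and $h_i^{-1}\partial h_i$. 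Your route instead establishes the invariant statement $\nabla_h\omega=0$ by exhibiting $\Psi_\omega$ as a holomorphic isometric isomorphism of harmonic bundles onto $(E^\vee,\overline{\partial}_{E^\vee},-\theta^\vee,h^\vee)$ and invoking uniqueness of the Chern connection and functoriality of adjoints. The two arguments are equivalent --- your identities $\partial_h\omega=0$ and $\theta_h^\dagger\omega=0$ are precisely the coordinate-free form of the paper's matrix computations --- but yours avoids the frame-by-frame bookkeeping and makes transparent why each of the four summands of $\nabla_h$ kills $\omega$; the paper's version has the advantage that the explicit statements ($h_i\in\mathrm{Sp}(2n,\mathbb{C})$, $s_i=\exp(\tfrac{1}{2}\log h_i)\in\mathrm{Sp}(2n,\mathbb{C})$) are reused verbatim in Lemma \ref{w-k} to build the $\mathrm{Sp}(2n)$-reduction, which your sketch of the ``simultaneously $h$-orthonormal Darboux frame'' would still need to reproduce. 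Both treatments leave the harmonicity of $f_h$ into $\mathrm{Sp}(2n,\mathbb{C})/\mathrm{Sp}(2n)$ (as opposed to $\mathrm{GL}(2n,\mathbb{C})/\mathrm{U}(2n)$) at the level of an assertion, so you are not missing anything the paper supplies.
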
  
  \begin{proof} Since $h$ is a pluri-harmonic metric, the connection $\nabla_h=\partial_h+\overline\partial_E+\theta+\theta^\dagger_h$ is a flat connection. Let $\{U_i\}_{i\in\Lambda}$ and $\{(e_{k,i})^{2n}_{k=1}\}_{i\in\Lambda}$ be the open cover and the frame which we constructed in Proposition \ref{one}. Let $\mathfrak{sp}(2n,\mathbb{C})$ be the Lie algebra of $\mathrm{Sp}(2n,\mathbb{C})$. To prove the claim, first, we show that the connection form of $\nabla_h$ w.r.t $(e_{k,i})^{2n}_{k=1}$ is a $\mathfrak{sp}(2n,\mathbb{C})$-valued 1-form on $U_i$. Once this is shown, since the transition functions of $\{(e_{k,i})^{2n}_{k=1}\}_{i\in\Lambda}$ take value in  $\mathrm{Sp}(2n,\mathbb{C})$,  we obtain a connection form on $P_{E,\mathrm{Sp}(2n,\mathbb{C})}$ and hence it induces a connection $\nabla$. The flatness of $\nabla$ follows from the flatness of $\nabla_h$. Reductiveness of $\nabla$ follows from $h$: From  Lemma \ref{w-k}, we know that $h$ defines a $\mathrm{Sp}(2n)$-reduction of $P_{E,\mathrm{Sp}(2n,\mathbb{C})}$. Since $\nabla$ is flat, $h$ induces a map $f_h:\widetilde{X}\to\mathrm{Sp}(2n,\mathbb{C})/\mathrm{Sp}(2n)$. $f_h$ is harmonic since $h$ is a pluri-harmonic metric. Reductiveness of $\nabla$ follows immediately. \par
  Let $A_i$ be the connection form of $\nabla_h$ w.r.t. $(e_{k,i})^{2n}_{k=1}$. Let $h_i$ be a $n\times n$ matrix such that 
  \begin{equation*}
  h_i:=
   \begin{pmatrix} 
 h|_{U_i}(e_{1,i},e_{1,i}) & \dots  &  h|_{U_i}(e_{1,i},e_{2n,i}) \\
    \vdots &  \ddots & \vdots \\
  h|_{U_i}(e_{2n,i},e_{1,i})  & \dots  &  h|_{U_i}(e_{2n,i},e_{2n,i})
\end{pmatrix}.
  \end{equation*}
 From the standard argument of the connections, we have 
 \begin{equation*}
 A_i=h_i^{-1}\partial h_i + \theta|_{U_i} + \theta^\dagger_h|_{U_i}=h_i^{-1}\partial h_i + \theta|_{U_i} + h_i^{-1} \overline{\theta^T}|_{U_i} h_i.
 \end{equation*}  
 We show that $A_i$ takes value in $\mathfrak{sp}(2n,\mathbb{C})$. First, we show that $\theta|_{U_i}$ takes value in $\mathfrak{sp}(2n,\mathbb{C})$. Recall that the local description of $\omega$ w.r.t. $(e_{k,i})^{2n}_{k=1}$ is $\mathcal{J}_n$. Since $\omega(\theta\otimes\mathrm{Id})=-\omega(\mathrm{Id}\otimes\theta)$ holds,
 \begin{equation*}
 \theta^T|_{U_i}\mathcal{J}_n=-\mathcal{J}_n\theta|_{U_i}
 \end{equation*}
 holds. Hence we showed it.\par
 We next prove $h_i$ takes value in $\mathrm{Sp}(2n,\mathbb{C})$. Once this is shown, then it is obvious that $\theta^\dagger_h|_{U_i}= h_i^{-1} \overline{\theta^T}|_{U_i} h_i$ takes value in  $\mathfrak{sp}(2n,\mathbb{C})$. We also can show that $h^{-1}_i\partial h_i$ takes value in it: Suppose $h_i$ takes value in $\mathrm{Sp}(2n,\mathbb{C})$. Then we have the following
 \begin{equation*}
 h_i^T\mathcal{J}_nh_i=\mathcal{J}_n.
 \end{equation*}
 Then we have 
  \begin{align*}
  h_i^T&=-\mathcal{J}_nh^{-1}_i\mathcal{J}_n,\\
h_i&=-\mathcal{J}_n(h^{-1}_i)^T\mathcal{J}_n,\\
     \partial h_i^T\mathcal{J}_n&h_i+h^T_i\mathcal{J}_n\partial h_i=0.
 \end{align*}
 Hence we have 
 \begin{align*}
 0&= \partial h^T_i\mathcal{J}_nh_i+h^T_i\mathcal{J}_n\partial h_i\\
 &=\partial h^T_i\mathcal{J}_n(-\mathcal{J}_n(h^{-1}_i)^T\mathcal{J}_n)+(-\mathcal{J}_nh^{-1}_i\mathcal{J}_n)\mathcal{J}_n\partial h_i\\
 &=\partial h^T_i(h^{-1}_i)^T\mathcal{J}_n+\mathcal{J}_nh^{-1}_i\partial h_i.
    \end{align*}
 Since $(h^{-1}_i\partial h_i)^T=\partial h^T_i(h^{-1}_i)^T$, $h^{-1}_i\partial h_i$   takes value in $\mathfrak{sp}(2n,\mathbb{C})$. We now prove $h_i$ takes value in $\mathrm{Sp}(2n,\mathbb{C})$. Let $(e^\vee_{k,i})^{2n}_{k=1}$  be the dual frame of $(e_{k,i})^{2n}_{k=1}$, $h^\vee$ be the dual metric of $h$, and $\omega^\vee$ be the dual of $\omega$. Then the matrix realizations of $h^\vee$ w.r.t to $(e^\vee_{k,i})^{2n}_{k=1}$ is $(h^{-1}_i)^T$. Since $\omega$ is compatible with $h$ we can use Lemma \ref{m-s} and hence we have
 \begin{equation*}
 (h_i^{-1})^T=\mathcal{J}_nh_i\mathcal{J}_n^T.
  \end{equation*}
  Hence we have 
  \begin{equation*}
 \mathcal{J}_n=h_i^T\mathcal{J}_nh_i.
  \end{equation*}  
  This shows that  $h_i$ takes value in $\mathrm{Sp}(2n,\mathbb{C})$.
   \end{proof}
   Let $M(2n,\mathbb{C})$ be the set of $2n\times 2n$-matrix, $\mathfrak{p}\subset M(2n,\mathbb{C})$ be the set of hermitian matrix, and $\mathfrak{p}_+\subset \mathfrak{p}$ be the set of positive definite ones. As it is well known the standard exponential map
  \begin{equation*}
  \mathrm{exp}:\mathfrak{p}\to\mathfrak{p}_+
     \end{equation*}
     is a real analytic isomorphism. We set $\mathrm{log}:=(\mathrm{exp})^{-1}$.\par
     Although the following Lemma might be well known to experts, we give the proof for convenience.
   \begin{lemma}\label{w-k}
    Let $E$ be a complex vector bundle, $h$ be a hermitian metric, and  $\omega$ be a smooth perfect skew-symmetric structure. We assume $h$ is compatible with $\omega$. 
   Under this assumption, $h$ defines a $\mathrm{Sp}(2n)$-reduction $P_{E,\mathrm{Sp}(2n)}$ of $P_{E,\mathrm{Sp}(2n,\mathbb{C})}$. 
      \end{lemma}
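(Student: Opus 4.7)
The plan is to realize $P_{E,\mathrm{Sp}(2n)}$ as the sub-bundle of $P_{E,\mathrm{Sp}(2n,\mathbb{C})}$ whose points are frames simultaneously $\omega$-symplectic and $h$-orthonormal, and to build local sections by normalizing the Gram matrix of $h$ while preserving the standard form of $\omega$. First I take the smooth local symplectic frames $(e_{k,i})_{k=1}^{2n}$ on an open cover $\{U_i\}$ produced by the (smooth version of the) construction in Proposition \ref{one}, so that $\omega|_{U_i}$ is represented by $\mathcal{J}_n$; write $h_i$ for the Gram matrix of $h$ in this frame, a smooth $U_i$-family of positive-definite hermitian $2n\times 2n$ matrices. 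The third characterization of compatibility in Lemma \ref{m-s}, combined with $\Omega = \mathcal{J}_n$, yields $h_i^T\mathcal{J}_n h_i = \mathcal{J}_n$, so $h_i$ in fact takes values in $\mathrm{Sp}(2n,\mathbb{C})$; this uses only compatibility of $h$ with $\omega$ and is essentially the same manipulation that appears later at the end of the proof of Proposition \ref{two}.

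The essential linear-algebraic input is that the positive hermitian square root preserves $\mathrm{Sp}(2n,\mathbb{C})$: for $A \in \mathrm{Sp}(2n,\mathbb{C})$ positive-definite hermitian, the map $X \mapsto \mathcal{J}_n^{-1} X^T \mathcal{J}_n$ is an involutive ring automorphism that sends $A$ to $A^{-1}$ and preserves positive hermitianity (because $\mathcal{J}_n$ is a real unitary matrix), so by uniqueness of the positive hermitian square root the relation $\mathcal{J}_n^{-1}(A^{1/2})^T \mathcal{J}_n = A^{-1/2}$ must hold, which is equivalent to $A^{1/2} \in \mathrm{Sp}(2n,\mathbb{C})$. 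Together with the smoothness of the functional calculus on positive hermitian matrices, this produces a smooth $\mathrm{Sp}(2n,\mathbb{C})$-valued map $h_i^{1/2}$ on $U_i$.

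I would then perform the gauge transformation $\tilde e_{k,i} := \sum_l (\overline{h_i^{-1/2}})_{lk}\, e_{l,i}$. Because $\mathrm{Sp}(2n,\mathbb{C})$ is stable under complex conjugation (as $\mathcal{J}_n$ is real), the new frame is still symplectic, and the standard change-of-basis computation gives a new Gram matrix of $h$ equal to $h_i^{-1/2}\, h_i\, h_i^{-1/2} = I$. Transition functions between the resulting frames $(\tilde e_{k,i})$ therefore preserve both the matrix $\mathcal{J}_n$ and the identity matrix, so they take values in $\mathrm{Sp}(2n,\mathbb{C}) \cap \mathrm{U}(2n) = \mathrm{Sp}(2n)$ and define the desired reduction $P_{E,\mathrm{Sp}(2n)} \subset P_{E,\mathrm{Sp}(2n,\mathbb{C})}$. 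The main technical point is the preservation of $\mathrm{Sp}(2n,\mathbb{C})$ by the positive hermitian square root; the remainder is routine bookkeeping with change-of-basis formulas.
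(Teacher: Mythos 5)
Your proof is correct and follows essentially the same route as the paper's: both normalize the symplectic frames of Proposition \ref{one} by the positive hermitian square root of $h_i$ (the paper writes it as $s_i=\mathrm{exp}(\frac{1}{2}\mathrm{log}\,h_i)$ and gets $s_i\in\mathrm{Sp}(2n,\mathbb{C})$ from $\mathrm{log}\,h_i\in\mathfrak{sp}(2n,\mathbb{C})$, whereas you deduce $h_i^{1/2}\in\mathrm{Sp}(2n,\mathbb{C})$ from uniqueness of positive square roots), and then observe the new transition functions lie in $\mathrm{Sp}(2n,\mathbb{C})\cap\mathrm{U}(2n)$. The only slip is cosmetic: $X\mapsto\mathcal{J}_n^{-1}X^T\mathcal{J}_n$ is an anti-automorphism, not an automorphism, but since $A^{1/2}$ commutes with itself your argument is unaffected.
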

   \begin{proof}
In Proposition \ref{one}, we constructed an open cover $\{U_i\}_{i\in\Lambda}$ and a family of frame $\{(e_{k,i})^{2n}_{k=1}\}_{i\in\Lambda}$ such that  its transition functions $\{g_{ij}\}_{i,j\in\Lambda}$ takes value in $\mathrm{Sp}(2n,\mathbb{C})$. We recall that $\{g_{ij}\}_{i,j\in\Lambda}$ constructs $P_{E,\mathrm{Sp}(2n,\mathbb{C})}$. To prove $h$ induces a $\mathrm{Sp}(2n)$-reduction, it is enough to show that on each $U_i$, $h$ defines a function $s_i:U_i\to\mathrm{Sp}(2n,\mathbb{C})$ such that if $U_i\cap U_j\neq\emptyset$
   \begin{equation*}
   s^{-1}_i(x)g_{ij}(x)s_j(x)\in \mathrm{Sp}(2n), x\in U_i\cap U_j
            \end{equation*}
   holds. Actually, if we set $g'_{ij}=s^{-1}_ig_{ij}s_j$, then it is easy to check that $\{g'_{ij}\}_{i,j\in\Lambda}$ defines a principal  $\mathrm{Sp}(2n)$-bundle which is a reduction of  $P_{E,\mathrm{Sp}(2n,\mathbb{C})}$.\par
   We now construct $s_i$. Let  $h_i$ be the matrix realization of $h$ w.r.t. $(e_{k,i})^{2n}_{k=1}$ as in Proposition \ref{two}. We showed that $h_i$ takes value in $\mathrm{Sp}(2n,\mathbb{C})$. We set
   \begin{equation*}
   s_i:=\mathrm{exp}\bigg(\frac{\mathrm{log}h_i}{2}\bigg).
   \end{equation*}
 $\mathrm{log}h_i$ makes sense since $h_i$ is a positive definite hermitian matrix. Since $h_i$ takes value in $\mathrm{Sp}(2n,\mathbb{C})$, $\mathrm{log}h_i$ takes value in $\mathfrak{sp}(2n,\mathbb{C})$. Hence  $s_i$ is a $\mathrm{Sp}(2n,\mathbb{C})$-valued smooth function on $U_i$.\par
 We next show that $s_i^{-1}g_{ij}s_j\in\mathrm{U}(n)$. We show this by direct calculation. Before going to the calculation we note that if $U_i\cap U_i\neq\emptyset$, then $h_i=g_{ij}h_j\overline g_{ij}^T$.
 \begin{align*}
\overline{s^{-1}_ig_{ij}s_j}^Ts^{-1}_ig_{ij}s_j&=s_j \overline{g_{ij}}^T s^{-1}_i s^{-1}_i g_{ij} s_j \\
 &=s_j\overline{g_{ij}}^T\mathrm{exp}\bigg(-\frac{\mathrm{log}h_i}{2}\bigg)\mathrm{exp}\bigg(-\frac{\mathrm{log}h_i}{2}\bigg)g_{ij}s_j\\
 &=s_j\overline{g_{ij}}^Th_i^{-1}g_{ij}s_j\\
 &=s_jh^{-1}_js_j\\
 &=s_j\mathrm{exp}\bigg(-\frac{\mathrm{log}h_j}{2}\bigg)\mathrm{exp}\bigg(-\frac{\mathrm{log}h_j}{2}\bigg)s_j\\
&=I_n.\\
  \end{align*}
  The first equation holds since $h_i$ is hermitian. Since $s_i$ is $\mathrm{Sp}(2n,\mathbb{C})$-valued, $s_i^{-1}g_{ij}s_j$ takes value in $\mathrm{Sp}(2n)$. The claim is proved.
          \end{proof}
   Let $i:\mathrm{Sp}(2n,\mathbb{C})\to\mathrm{GL}(2n,\mathbb{C})$ be the standard representaion of $\mathbb{C}^{2n}$.
 \begin{proposition}\label{three}
 Let $P\to X$ be a principal $\mathrm{Sp}(2n,\mathbb{C})$-bundle. Then the associated bundle $E:=P\times^i\mathbb{C}^{2n}$ admits a smooth perfect skew-symmetric pairing $\omega$. 
 \end{proposition}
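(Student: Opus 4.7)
The plan is to exploit the defining property of $\mathrm{Sp}(2n,\mathbb{C})$: it is by definition the subgroup of $\mathrm{GL}(2n,\mathbb{C})$ that preserves the standard skew form $\omega_0$ on $\mathbb{C}^{2n}$ whose matrix in the standard basis is $\mathcal{J}_n$. Since the structure group of $P$ preserves $\omega_0$, this bilinear form on the typical fiber descends to a global pairing on the associated bundle, by the standard mechanism that $G$-invariant tensors on the representation space produce well-defined sections of the associated tensor bundles.

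Concretely, for each $x\in X$ I would define $\omega_x:E_x\otimes E_x\to\mathbb{C}$ as follows. Pick any $p\in P_x$; this gives the linear isomorphism $\iota_p:\mathbb{C}^{2n}\to E_x$, $v\mapsto[p,v]$. Set
\begin{equation*}
\omega_x(\iota_p(v),\iota_p(w)):=\omega_0(v,w).
\end{equation*}
For a different representative $p'=p\cdot g$ with $g\in\mathrm{Sp}(2n,\mathbb{C})$, one has $\iota_{p'}=\iota_p\circ g$, and the identity $\omega_0(gv,gw)=\omega_0(v,w)$ (which is precisely the definition of $\mathrm{Sp}(2n,\mathbb{C})$) shows that the formula is independent of the chosen $p$. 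Skew-symmetry and perfectness of $\omega$ are inherited immediately from those of $\omega_0$, so the only remaining point is smoothness.

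For smoothness, I would pick local sections $\sigma_\alpha:U_\alpha\to P$ and let $(e_{k,\alpha})_{k=1}^{2n}$ be the induced local frame of $E|_{U_\alpha}$ defined by $e_{k,\alpha}(x):=[\sigma_\alpha(x),\epsilon_k]$ for the standard basis $\{\epsilon_k\}$ of $\mathbb{C}^{2n}$. By construction the matrix $\bigl(\omega(e_{k,\alpha},e_{l,\alpha})\bigr)_{k,l}$ equals the constant matrix $\mathcal{J}_n$, so $\omega$ is smooth on each $U_\alpha$ and hence globally smooth. There is essentially no obstacle here: the statement is a transparent consequence of the definition of $\mathrm{Sp}(2n,\mathbb{C})$ as the stabilizer of $\omega_0$, and the argument should occupy only a few lines.
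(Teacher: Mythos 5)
Your proof is correct and is essentially the same argument as the paper's: both rest on the fact that $\mathrm{Sp}(2n,\mathbb{C})$ stabilizes the standard form with matrix $\mathcal{J}_n$, the paper phrasing this via local frames whose transition functions preserve $\mathcal{J}_n$ and gluing the local constant forms, while you phrase it fiberwise via the associated-bundle construction and then verify smoothness in exactly those local frames. No gap.
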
  
 \begin{proof}
 By the definition of $E$, we have an open covering $\{U_i\}_{i\in\Lambda}$ of $X$ and on each $U_i$, we have a   frame $(e_{k,i})^{2n}_{k=1}$ of $E$ such that the associated tranisition functions $\{g_{ij}\}_{i,j\in\Lambda}$ takes value in  $\mathrm{Sp}(2n,\mathbb{C})$. We define a section $\omega_i$ of $E^\vee\otimes E^\vee|_{U_i}$ as 
 \begin{equation*}
 \omega_i:=\sum^{n}_{k=1}\big(e^\vee_{k,i}\otimes e^\vee_{k+n,i}-e^\vee_{k+n,i}\otimes e^\vee_{k,i}\big).
 \end{equation*}
 Here, $e^\vee_{k,i}$ is the dual frame of $e_{k,i}$. We note that 
 \begin{equation*}
 \begin{pmatrix} 
  \omega_i(e_{1,i},e_{1,i}) & \dots  & \omega_i(e_{1,i},e_{2n,i}) \\
    \vdots &  \ddots & \vdots \\
 \omega_i(e_{2n,i},e_{1,i})  & \dots  & \omega_i(e_{2n,i},e_{2n,i})
\end{pmatrix}
=\mathcal{J}_n.
 \end{equation*}
 Since the transition function $\{g_{ij}\}_{i,j\in\Lambda}$ takes value in  $\mathrm{Sp}(2n,\mathbb{C})$, $\omega_{i}|_{U_i\cap U_j}=\omega_{j}|_{U_i\cap U_j}$ holds. Hence we can glue them and construct a global section $\omega$ of $E^\vee\otimes E^\vee$ such that $\omega|_{U_i}=\omega_i$. By the local description of $\omega$, it is a smooth perfect skew-symmetric pairing.
  \end{proof}

 \begin{proposition}\label{four}
 Let $P\to X$ a principle $\mathrm{Sp}(2n,\mathbb{C})$-bundle with a reductive flat connection $\nabla$. Then we obtain a harmonic bundle $(E,\overline\partial_E,\theta,h)$ and it has a symplectic structure $\omega$.
  \end{proposition}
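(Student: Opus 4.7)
The plan is to obtain the harmonic bundle $(E,\overline\partial_E,\theta,h)$ via the Corlette-Donaldson-Simpson construction, build $\omega$ through Proposition \ref{three}, and then verify that the three defining conditions of a symplectic structure all follow from the single observation that $\omega$ is parallel with respect to the induced flat connection.

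First, since $\nabla$ is reductive, the Corlette-Donaldson-Simpson theorem furnishes a $\pi_1(X)$-equivariant harmonic map $f\colon\widetilde X\to\mathrm{Sp}(2n,\mathbb{C})/\mathrm{Sp}(2n)$, equivalently a harmonic $\mathrm{Sp}(2n)$-reduction $P_K\subset P$. Set $E:=P\times^i\mathbb{C}^{2n}$; the flat connection $\nabla$ descends to a flat connection $D$ on $E$, and the reduction $P_K$ together with the standard Hermitian inner product on $\mathbb{C}^{2n}$ defines a metric $h$ on $E$. Decomposing $D=D_h+\phi$ with $D_h$ the $h$-metric connection and $\phi$ self-adjoint, the harmonicity of $f$ yields $(D_h^{0,1})^2=0$ and $D_h^{0,1}\phi^{1,0}=0$, so setting $\overline\partial_E:=D_h^{0,1}$ and $\theta:=\phi^{1,0}$ we obtain a harmonic bundle $(E,\overline\partial_E,\theta,h)$, as recalled in the introduction. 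Proposition \ref{three} then supplies a smooth perfect skew-symmetric pairing $\omega$ on $E$, locally represented by the matrix $\mathcal{J}_n$ in any frame coming from a local section of $P$.

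The crucial observation is that the standard bilinear form on $\mathbb{C}^{2n}$ defined by $\mathcal{J}_n$ is $\mathrm{Sp}(2n,\mathbb{C})$-invariant; hence, regarded as a section of $E^\vee\otimes E^\vee$, the pairing $\omega$ is parallel under $D$, i.e.\ $D\omega=0$. Because $\mathrm{Sp}(2n)=\mathrm{Sp}(2n,\mathbb{C})\cap\mathrm{U}(2n)$ simultaneously preserves $h$ and the standard symplectic form, and $D_h$ is the connection induced by the $\mathrm{Sp}(2n)$-reduction $P_K$, its local connection forms take values in $\mathfrak{sp}(2n)$; consequently $D_hh=0$ and $D_h\omega=0$. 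Subtracting from $D\omega=0$ gives $\phi\cdot\omega=0$, namely $\omega(\phi u,v)+\omega(u,\phi v)=0$ for all sections $u,v$ of $E$. Taking the $(0,1)$-part of $D_h\omega=0$ shows $\overline\partial_E\omega=0$, so $\omega$ is holomorphic; taking the $(1,0)$-part of $\phi\cdot\omega=0$ yields $\omega(\theta\otimes\mathrm{Id})=-\omega(\mathrm{Id}\otimes\theta)$.

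Finally, compatibility of $h$ with $\omega$ is checked pointwise using a local frame coming from a section of $P_K$: in such a frame the matrix realizations of $h$ and $\omega$ are $I_{2n}$ and $\mathcal{J}_n$ respectively, and Lemma \ref{m-s} applied to $H=I_{2n}$, $\Omega=\mathcal{J}_n$ gives compatibility at every point. This completes the verification that $\omega$ is a symplectic structure on $(E,\overline\partial_E,\theta,h)$. The only step requiring some care is the identification of $D_h$ as valued in $\mathfrak{sp}(2n)$; this relies on the standard fact that for a principal $G$-bundle with flat connection and harmonic $K$-reduction, the metric part of the connection is a connection on the reduction, together with the Lie algebra identity $\mathfrak{sp}(2n)=\mathfrak{sp}(2n,\mathbb{C})\cap\mathfrak{u}(2n)$.
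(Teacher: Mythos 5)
Your proof is correct, and it organizes the verification differently from the paper. The paper works with the explicit local formulas for $\delta^{1,0}$ and $\delta^{0,1}$, writes $\theta=\tfrac{1}{2}\big(A_i^{1,0}-(A_i^{0,1})^{\dagger}_h+h_i^{-1}\partial h_i\big)$, and checks term by term that everything takes values in $\mathfrak{sp}(2n,\mathbb{C})$; holomorphicity of $\omega$ is then a separate matrix computation $\overline\partial\mathcal{J}_n-B_i^T\mathcal{J}_n-\mathcal{J}_nB_i=0$. You instead start from the single structural fact $D\omega=0$ (invariance of $\mathcal{J}_n$ under $\mathrm{Sp}(2n,\mathbb{C})$), use that $D_h$ is a connection on the $\mathrm{Sp}(2n)$-reduction to get $D_h\omega=0$ and $D_hh=0$, and obtain both the holomorphicity of $\omega$ and the skew-symmetry of $\theta$ by splitting $D\omega=0$ into its $D_h$- and $\phi$-parts and then into types. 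This is cleaner and makes the logical dependence transparent; the one point you rightly flag --- that the $\mathfrak{k}\oplus\mathfrak{p}$ decomposition of the flat connection relative to $P_K$ coincides with the metric-connection/self-adjoint decomposition --- is exactly where the paper's explicit formulas do the work, and it holds here because $\mathfrak{sp}(2n,\mathbb{C})$ is stable under $A\mapsto\overline{A}^T$, so the Hermitian and skew-Hermitian parts of an $\mathfrak{sp}(2n,\mathbb{C})$-valued form stay in $\mathfrak{sp}(2n,\mathbb{C})$. You also gain something the paper leaves implicit: the compatibility of $h$ with $\omega$ is only asserted there (``by construction''), whereas your pointwise check in a unitary frame via Lemma \ref{m-s} with $H=I_{2n}$, $\Omega=\mathcal{J}_n$ makes it explicit.
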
  
  \begin{proof}
 By the previous proposition, we have a smooth bundle $E$ with a smooth perfect skew-symmetric pairing $\omega$. Since $\nabla$ is a reductive a flat bundle, we have a  $\pi_1(X)$-equivalent harmonic map $f:\widetilde{X}\to \mathrm{Sp}(2n,\mathbb{C})/\mathrm{Sp}(2n)$. $f$ induces a hermitian metric $h$ on $E$ and by construction, it is compatible with $\omega$.\par
  Let $D_{\nabla}$ be the flat bundle of $E$ induced by $\nabla$. We have a decomposition $D_{\nabla}=D_h+\phi$ such that $D_h$ is a metric connection and $\phi$ is self-adjoint w.r.t. $h$. Let $\theta$ be the $(1,0)$-part of $\phi$.  Since $\phi$ is self-adjoint we have the decomposition $\phi=\theta+\theta^{\dagger}_h$. As we recalled in the previous section, the reductiveness of  $\nabla$ implies that $D^{0,1}_h\circ D^{0,1}_{h}=0$ and $D^{0,1}_h\theta=0$. Hence $(E,D^{0,1}_{h},\theta,h)$ is a harmonic bundle.\par
 Next, we show that $\theta$ is compatible with $\omega$. Let $(e_{k,i})^{2n}_{k=1}$ be the frame that we used in the last proposition, and let $A_i$ be the connection matrix of $D_\nabla$ w.r.t. $(e_{k,i})^{2n}_{k=1}$ (i.e. $D_\nabla=d+A_i$ locally).  Note that $A_i$ takes value in $\mathfrak{sp}(2n,\mathbb{C})$. We briefly recall how we obtain the decomposition $D_{\nabla}=D_h+\phi.$ Let $D^{1,0}$ (resp. $D^{0,1})$ be the (1,0) (resp. (0,1))-part of $D_\nabla$. Let $\delta^{1,0}$ (resp. $\delta^{0,1})$ be the (1,0) (resp. (0,1))-type of the differential operator which makes $D^{1,0}+\delta^{0,1}$ and $D^{0,1}+\delta^{1,0}$ metric connections.  $D_h$ and $\phi$ were defined as follows
 \begin{equation*}
D_h:=\frac{D^{1,0}+D^{0,1}+\delta^{1,0}+\delta^{0,1}}{2},\phi:=\frac{D^{1,0}+D^{0,1}-\delta^{1,0}-\delta^{0,1}}{2}.
 \end{equation*}
We note that $\delta^{1,0}$ and $\delta^{0,1}$do exsits and  locally they are expressed as
\begin{align*}
\delta^{1,0}&=\partial-(A_i^{0,1})^{\dagger}_h+h^{-1}_i\partial h_i,\\
\delta^{0,1}&=\overline\partial-(A_i^{1,0})^{\dagger}_h+h^{-1}_i\overline\partial h_i.
\end{align*}
Hence $\theta$ has the form 
\begin{equation*}
\theta=\frac{A_i^{1,0}-(A_i^{0,1})^{\dagger}_h+h^{-1}_i\partial h_i}{2}.
\end{equation*}
Hence $\theta$ takes value in $\mathfrak{sp}(2n,\mathbb{C})$ and therefore it is compatible with $\omega$. \par
We next prove $\omega$ is holomorphic and hence it is a symplectic structure of $(E,D^{0,1}_{h},\theta,h)$. We have to show $D^{0,1}_h\omega=0$. By the construction of  $D_h$ we have 
\begin{equation*}
D^{0,1}_h=\frac{D^{0,1}+\delta^{0,1}}{2}.
\end{equation*}
Let $B_i$ be the connection matrix of $D^{0,1}_h$. From the local description of $\delta^{0,1}$, $B_i$ is a (0,1)-form which takes value in $\mathfrak{sp}(2n,\mathbb{C})$. Hence we have 
\begin{align*}
D^{0,1}_h\omega&=\overline\partial\mathcal{J}_n-B_i^T\mathcal{J}_n-\mathcal{J}_nB_j=0.
\end{align*}
The first equality follows from the standard argument of connection (See \cite{Ko}, for example). Therefore we proved the claim.
  \end{proof}
  \begin{proof}[Proof of Theorem \ref{H-P}.]
  Proposition \ref{one} and \ref{two} gives a path from a harmonic bundle with symplectic structure to a principal $\mathrm{Sp}(2n,\mathbb{C})$-bundle with a reductive flat connection. The inverse path is given by Proposition \ref{three} and \ref{four}. 
    \end{proof}
\section{Good filtered Higgs bundles and Good Wild Harmonic bunldes}\label{f sh0}
\subsection{Filtered sheaves}\label{f sh}
Let $X$ be a complex manifold and $H$ be a simple normal crossing hypersurface of $X$. Let $H:=\bigcup_{i\in\Lambda}H_\lambda$ be the decomposition such that each $H_i$ is smooth.
\subsubsection{Filtered sheaves}
For any $P\in H$, a holomorphic coordinate neighborhood $(U_P,z_1,\dots,z_n)$ around $P$ is called admissible if $H_P:=H\cap U_P=\bigcup^{l(P)}_{i=1}\{z_i=0\}$. For admissible coordinate neighborhood, we obtain a map $\rho_P:\{1,\dots,l(P)\}\to\Lambda$ such that $H_{\rho_{P(i)}}\cap U_p=\{z_i=0\}.$ We also obtain a map $\kappa_P:\mathbb{R}^{\Lambda}\to\mathbb{R}^{l(P)}$ by $\kappa_P(\textbf{a})=(a_{\rho(1)},\dots,a_{\rho(l(P))})$.\par
Let $\mathcal{O}_{X}(\ast H)$ be the sheaf of meromorphic function on $X$ which may have poles along $H$. Let $\mathcal{V}$ be a torsion free $\mathcal{O}_{X}(\ast H)$-module. A filtered sheaf over $\mathcal{V}$ is defined to be a tuple of coherent $\mathcal{O}_X$-submodules $\mathcal{P}_{\textbf{\textit{a}}}\mathcal{V}\subset \mathcal{V}$ ($\textbf{\textit{a}}\in\mathbb{R}^{\Lambda}$) such that 
\begin{itemize}
\item  $\mathcal{P}_{\textbf{\textit{a}}}\mathcal{E}\subset \mathcal{P}_{\textbf{\textit{b}}}\mathcal{E}$ if $\textbf{\textit{\textit{a}}}\leq\textbf{\textit{b}}$, i.e. $a_i\leq b_i$ for any $i\in\Lambda$.
\item $\mathcal{P}_{\textbf{\textit{a}}}\mathcal{E}\otimes\mathcal{O}_X(\ast H)=\mathcal{E}$ for any $\textbf{\textit{a}}\in\mathbb{R}^{\Lambda}$.
\item $\mathcal{P}_{\textbf{\textit{a}+\textit{n}}}\mathcal{E}=\mathcal{P}_{\textbf{\textit{a}}}\mathcal{E}\otimes\mathcal{O}_X(\sum_{i\in\Lambda}n_i H_i)$ for any $\textbf{\textit{a}}\in\mathbb{R}^{\Lambda}$ and for any $\textbf{\textit{n}}\in\mathbb{Z}^{\Lambda}$.
\item For any $\textbf{\textit{a}}\in\mathbb{R}^{\Lambda}$, there exists $\bm{\epsilon}$ $\in\mathbb{R}_{>0}^{\Lambda}$ such that $\mathcal{P}_{\textbf{\textit{a}}+\bm{\epsilon}}\mathcal{E}=\mathcal{P}_{\textbf{\textit{a}}}\mathcal{E}$.
\item For any $P\in H$, let $(U_P,z_1,\dots,z_n)$ be an admissible coordinate of $P$. Then $\mathcal{P}_{\textbf{\textit{a}}}\mathcal{E}|_{U_P}$ depends only on $\kappa_P(\textbf{\textit{a}})$ for any $\textbf{\textit{a}}\in\mathbb{R}^{\Lambda}.$
\end{itemize}
For any coherent $\mathcal{O}_{X}(\ast H)$-submodule $\mathcal{E'}\subset\mathcal{E}$, we obtain a filtered sheaf $\mathcal{P}_{*}\mathcal{E'}$ over $\mathcal{E}'$ by $\mathcal{P}_{\textbf{\textit{a}}}\mathcal{E}'=\mathcal{P}_{\textbf{\textit{a}}}\mathcal{E}\cap \mathcal{E}'$. If $\mathcal{V}'$ is saturated, i.e. $\mathcal{E}'':=\mathcal{E}/\mathcal{E}'$ is torsion-free, then we obtain a filtered sheaf $\mathcal{P}_{*}\mathcal{E}''$ over $\mathcal{E}''$ by $\mathcal{P}_{\textbf{\textit{a}}}\mathcal{E}'':=\mathrm{Im}(\mathcal{P}_{\textbf{\textit{a}}}\mathcal{E}\to\mathcal{E}'')$.\par
A morphism of filtered sheaves $f:\mathcal{P}_{*}\mathcal{E}_1\to\mathcal{P}_{*}\mathcal{E}_2$ is a morphism of $\mathcal{O}_{X}(\ast H)$-modules such that $f(\mathcal{P}_{\textbf{\textit{a}}}\mathcal{E}_1)\subset\mathcal{P}_{\textbf{\textit{a}}}\mathcal{E}_2$ for any $\textbf{\textit{a}}\in\mathbb{R}^{\Lambda}$. \par
Let $\mathcal{P}_*\mathcal{E}$ be a flltered sheaf on $X$. For every open subset $U\subset X$, we can induce a filtered sheaf over $\mathcal{E}|_U$ from $\mathcal{P}_*\mathcal{E}$. We denote this filtered sheaf $\mathcal{P}_*\mathcal{E}|_U$. Conversely, let $X=\bigcup_{i\in\Lambda}U_i$ be an open covering. Let $\mathcal{P}_*\mathcal{V}_i$ be a   filtered sheaf on $U_i$. If $\mathcal{P}_*\mathcal{E}_i|_{U_i\cap U_j}=\mathcal{P}_*\mathcal{E}_i|_{U_i\cap U_j}$, we have a unique filtered sheaf $\mathcal{P}_*\mathcal{E}$ on $X$ such that $\mathcal{P}_*\mathcal{E}|_{U_i}=\mathcal{P}_*\mathcal{E}_i.$ See \cite[Section 2.1.2]{M2} for details of this paragraph.
\subsubsection{Filtered Higgs sheaves}
Let $\mathcal{E}$ be a torsion-free coherent $\mathcal{O}_{X}(\ast H)$-module. A Higgs field $\theta:\mathcal{V}\to\Omega^1_X\otimes\mathcal{V}$ is a $\mathcal{O}_X$-linear morphism of sheaves such that $\theta\wedge\theta=0$. When $\mathcal{V}$ is equipped with a Higgs field, a sub-Higgs sheaf of  $\mathcal{V}'$ is a coherent $\mathcal{O}_X(\ast H)$-submodule $\mathcal{V}'\subset\mathcal{V}$ such that $\theta(\mathcal{V}')\subset\Omega^1_X\otimes\mathcal{V}'$.  A pair of a filtered sheaf $\mathcal{P}_*\mathcal{V}$ over $\mathcal{V}$ and a Higgs field $\theta$ of $\mathcal{V}$ is called a filtered Higgs bundle.
\subsection{$\mu_L$- stability condition for  filtered Higgs sheaves}
Throughout this section, we assume $X$ to be a smooth projective variety, $H=\bigcup_{i\in\Lambda}H_i$ to be a normal crossing divisor of it, and $L$ to be an ample line bundle.
\subsubsection{Slope of filtered sheaves}\label{slope}
Let $\mathcal{P}_*\mathcal{E}$ be a filtered sheaf on $(X, H)$. We recall the definition of the first Chern class $c_1(\mathcal{P}_*\mathcal{E})$. Let $\textbf{\textit{a}}\in\mathbb{R}^{\Lambda}$. Let $\eta_i$ be a generic point on $H_i$. The $\mathcal{O}_{X,\eta_i}$-module $(\mathcal{P}_{\textbf{\textit{a}}}\mathcal{E})_{\eta_i}$ only depends on $a_i$ which we denote as $\mathcal{P}_{a_i}(\mathcal{E}_{\eta_i})$. We obtain a $\mathcal{O}_{H_i,\eta_i}$-module $\mathrm{Gr}^{\mathcal{P}}_{a_i}(\mathcal{E}_{\eta_i}):=\mathcal{P}_{a_i}(\mathcal{E}_{\eta_i})/\sum_{b_i<a_i}\mathcal{P}_{b_i}(\mathcal{E}_{\eta_i})$. $c_1(\mathcal{P}_*\mathcal{E})$ is defined as 
\begin{equation*}
c_1(\mathcal{P}_*\mathcal{E}):=c_1(\mathcal{P}_{\textbf{\textit{a}}}\mathcal{E})-\sum_{i\in\Lambda}\sum_{a_i-1<a\leq a_i}a\cdot \mathrm{rank}\mathrm{Gr}^{\mathcal{P}}_{a}(\mathcal{E}_{\eta_i})\cdot[H_i].
\end{equation*}
Here, $[H_i]\in H^2(X,\mathbb{R})$ is the cohomology class induced by $H_i$. \par
The slope $\mu_L(\mathcal{P}_*\mathcal{E})$ of a filtered sheaf $\mathcal{P}_*\mathcal{E}$ with respect to $L$ is  defined as 
\begin{equation*}
\mu_L(\mathcal{P}_*\mathcal{E})=\frac{1}{\mathrm{rank}\mathcal{E}}\int_Xc_1(\mathcal{P}_*\mathcal{E})\cdot c_1(L)^{\mathrm{dim}X-1}.
\end{equation*}
\subsubsection{$\mu_L$-stablity condition}
Let $(\mathcal{P}_*\mathcal{E},\theta)$ be a filtered Higgs bundle over $(X,H)$. We say that $(\mathcal{P}_*\mathcal{E},\theta)$ is $\mu_L$-stable (resp. $\mu_L$-semistable) if for every sub Higgs sheaf $\mathcal{E}'\subset\mathcal{E}$ such that $0<\mathrm{rank}\mathcal{E}'<\mathrm{rank}\mathcal{E}$, $\mu_L(\mathcal{P}_*\mathcal{E}')<\mu_L(\mathcal{P}_*\mathcal{E})$ (resp. $\mu_L(\mathcal{P}_*\mathcal{E}')\leq\mu_L(\mathcal{P}_*\mathcal{E}$) ) holds.\par
We say that $(\mathcal{P}_*\mathcal{E},\theta)$ is $\mu_L$-polystable if the following two conditions are satisfied 
\begin{itemize}
\item $(\mathcal{P}_*\mathcal{E},\theta)$ is $\mu_L$-semistable.
\item We have a decomposition $(\mathcal{P}_*\mathcal{E},\theta)=\bigoplus_i (\mathcal{P}_*\mathcal{E}_i,\theta_i)$ such that each $(\mathcal{P}_*\mathcal{E}_i,\theta_i)$ is $\mu_L$-stable and $\mu_L(\mathcal{P}_*\mathcal{E})=\mu_L(\mathcal{P}_*\mathcal{E}_i)$ holds.
\end{itemize}
\subsubsection{Canonical decomposition}
Let $(\mathcal{P}_*\mathcal{E}_1,\theta_1)$ and $(\mathcal{P}_*\mathcal{E}_2,\theta_2)$ be filtered Higgs bundle on $(X,H)$. We use the following result frequently without mention.
\begin{proposition}[{\cite[Lemma 3.10]{M1}}]
Let $(\mathcal{P}_*\mathcal{E}_i,\theta_i)(i=1,2)$ be $\mu_L$-semistable reflexive satrated Higgs sheaves such that $\mu_L(\mathcal{E}_1)=\mu_L(\mathcal{E}_2)$. Assume either one of the following:
\begin{itemize}
\item One of $(\mathcal{P}_*\mathcal{E}_i,\theta_i)$ is $\mu_L$-stable and $\mathrm{rank}\mathcal{E}_1=\mathrm{rank}\mathcal{E}_2$ holds.
\item Both $(\mathcal{P}_*\mathcal{E}_i,\theta_i)$ are $\mu_L$-stable.
\end{itemize}
If there is a non-trivial map $f:(\mathcal{P}_*\mathcal{E}_1,\theta_1)\to(\mathcal{P}_*\mathcal{E}_2,\theta_2)$, then $f$ is an isomorphism.
\end{proposition}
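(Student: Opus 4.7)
The plan is the classical slope--stability argument for Schur's lemma, lifted to the filtered Higgs setting. Denote the non-trivial map by $f$ and form $\mathcal{K}:=\ker f \subset \mathcal{E}_1$, $\mathcal{I}:=\mathrm{Im}(f)\subset \mathcal{E}_2$, and the saturation $\mathcal{I}^{\mathrm{sat}}$ of $\mathcal{I}$ in $\mathcal{E}_2$. Because $f$ intertwines the Higgs fields, each of these is a sub-Higgs sheaf; $\mathcal{K}$ and $\mathcal{I}^{\mathrm{sat}}$ inherit subfiltrations from the ambient filtered sheaves, while $\mathcal{I}$ is equipped with the quotient filtration from $\mathcal{P}_*\mathcal{E}_1$.

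Next I would exploit additivity of rank and of the filtered first Chern class on the short exact sequence
\begin{equation*}
0 \to \mathcal{K} \to \mathcal{E}_1 \to \mathcal{I} \to 0,
\end{equation*}
which yields the weighted-average identity
\begin{equation*}
\mu_L(\mathcal{P}_*\mathcal{E}_1)\cdot\mathrm{rank}(\mathcal{E}_1) = \mu_L(\mathcal{P}_*\mathcal{K})\cdot\mathrm{rank}(\mathcal{K}) + \mu_L(\mathcal{P}^{\mathrm{quot}}_*\mathcal{I})\cdot\mathrm{rank}(\mathcal{I}).
\end{equation*}
Because $f$ is a morphism of filtered sheaves, the quotient filtration on $\mathcal{I}$ is contained in the subfiltration coming from $\mathcal{E}_2$; combined with the fact that passing to the saturation only raises the slope, semistability of $\mathcal{P}_*\mathcal{E}_2$ gives
\begin{equation*}
\mu_L(\mathcal{P}^{\mathrm{quot}}_*\mathcal{I}) \leq \mu_L(\mathcal{P}^{\mathrm{sub}}_*\mathcal{I}) \leq \mu_L(\mathcal{P}_*\mathcal{I}^{\mathrm{sat}}) \leq \mu_L(\mathcal{P}_*\mathcal{E}_2) = \mu_L(\mathcal{P}_*\mathcal{E}_1),
\end{equation*}
while semistability of $\mathcal{P}_*\mathcal{E}_1$ similarly bounds $\mu_L(\mathcal{P}_*\mathcal{K}) \leq \mu_L(\mathcal{P}_*\mathcal{E}_1)$. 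Plugging these into the weighted-average identity forces every inequality in the chain to be an equality.

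I then invoke stability. When both $\mathcal{E}_i$ are $\mu_L$-stable, the slope equalities contradict stability unless $\mathcal{K}\in\{0,\mathcal{E}_1\}$ and $\mathcal{I}^{\mathrm{sat}}\in\{0,\mathcal{E}_2\}$; non-triviality of $f$ rules out $\mathcal{K}=\mathcal{E}_1$ and $\mathcal{I}^{\mathrm{sat}}=0$. In the other case, stability of whichever $\mathcal{E}_i$ is stable (say $\mathcal{E}_1$) gives $\mathcal{K}=0$, so $f$ is injective, and then the rank equality $\mathrm{rank}(\mathcal{I})=\mathrm{rank}(\mathcal{E}_2)$ together with torsion-freeness of $\mathcal{E}_2/\mathcal{I}^{\mathrm{sat}}$ forces $\mathcal{I}^{\mathrm{sat}}=\mathcal{E}_2$.

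It remains to upgrade the identifications $\mathcal{K}=0$ and $\mathcal{I}^{\mathrm{sat}}=\mathcal{E}_2$ to an isomorphism of filtered sheaves. Injectivity is already established; the equality of slopes between $\mathcal{I}$ with its quotient filtration and $\mathcal{E}_2$ with its subfiltration forces the codimension-one part of $\mathcal{E}_2/\mathcal{I}$ to have vanishing intersection with $c_1(L)^{\dim X - 1}$, hence to be zero by ampleness of $L$, giving $\mathcal{I}=\mathcal{E}_2$ as $\mathcal{O}_X(\ast H)$-modules; the same slope equality forces $f(\mathcal{P}_{\mathbf{a}}\mathcal{E}_1)=\mathcal{P}_{\mathbf{a}}\mathcal{E}_2$ for all $\mathbf{a}\in\mathbb{R}^\Lambda$. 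I expect this last point---deducing coincidence of the quotient and sub filtrations from slope equality, rather than merely equality of underlying $\mathcal{O}_X(\ast H)$-modules---to be the main obstacle, since it is precisely the genuinely filtered content that is absent from the classical non-filtered Schur's lemma.
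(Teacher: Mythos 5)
This proposition is quoted by the paper from \cite[Lemma 3.10]{M1} and is not proved there, so there is no in-paper argument to compare against; your proposal follows the standard filtered Schur-type argument, which is surely also the route taken in the cited source. The overall architecture (kernel/image/saturation, additivity of the parabolic first Chern class on the short exact sequence, the chain of slope inequalities, and the case analysis via stability) is correct, and you correctly identify where the genuinely filtered content lies.

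Two steps are, however, genuinely incomplete as written. First, from the slope equalities you conclude that the codimension-one part of $\mathcal{E}_2/\mathcal{I}$ vanishes ``hence $\mathcal{I}=\mathcal{E}_2$ as $\mathcal{O}_X(\ast H)$-modules.'' Ampleness of $L$ only kills the divisorial part of the support of $\mathcal{E}_2/\mathcal{I}$; it says nothing about a cokernel supported in codimension at least two (compare the inclusion of an ideal sheaf of a point into $\mathcal{O}_{\mathbb{P}^2}$, which has the same first Chern class but is not an isomorphism). This is exactly where the hypothesis that the $(\mathcal{P}_*\mathcal{E}_i,\theta_i)$ are \emph{reflexive} saturated sheaves must be used: once $f$ is an isomorphism outside a closed subset $Z$ of codimension at least two, $\mathcal{H}om(\mathcal{P}_{\textbf{\textit{a}}}\mathcal{E}_2,\mathcal{P}_{\textbf{\textit{a}}}\mathcal{E}_1)$ is reflexive, so $f^{-1}$ extends across $Z$ and $f$ is an isomorphism everywhere. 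You never invoke reflexivity, so as stated the inference fails. Second, the final and most delicate point --- that equality of slopes between the quotient filtration $f(\mathcal{P}_{\textbf{\textit{a}}}\mathcal{E}_1)$ and the ambient filtration $\mathcal{P}_{\textbf{\textit{a}}}\mathcal{E}_2$ forces the two filtrations to coincide --- is only ``expected'' in your write-up. It is true, but it requires the strict monotonicity of the parabolic degree: for two filtered structures $\mathcal{P}'_*\subseteq\mathcal{P}_*$ on the same $\mathcal{O}_X(\ast H)$-module, one has $\mu_L(\mathcal{P}'_*)\leq\mu_L(\mathcal{P}_*)$ with equality only if $\mathcal{P}'_{\textbf{\textit{a}}}=\mathcal{P}_{\textbf{\textit{a}}}$ for all $\textbf{\textit{a}}$ (the difference of degrees is a nonnegative combination, weighted by jumps of the filtration along each $H_i$ and by lengths of the quotients $\mathcal{P}_{\textbf{\textit{a}}}/\mathcal{P}'_{\textbf{\textit{a}}}$, of intersection numbers against $c_1(L)^{\dim X-1}$, all of which must vanish). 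Since you flag this as ``the main obstacle'' without supplying it, the proof is not complete; supplying this lemma and the reflexivity argument would close it.
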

The following is straightforward from the above result.
\begin{corollary}Let $(\mathcal{P}_*\mathcal{E},\theta)$ be a $\mu_L$-polystable reflexive satrated Higgs sheaves. Then there exists an unique decomposition $(\mathcal{P}_*\mathcal{E},\theta)=\bigoplus_i(\mathcal{P}_*\mathcal{E}_i,\theta_i)\otimes \mathbb{C}^{m(i)}$ such that (i) $(\mathcal{P}_*\mathcal{E}_i,\theta_i)$ are $\mu_L$-stable, (ii) $\mu_L(\mathcal{P}_*\mathcal{E})=\mu_L(\mathcal{P}_*\mathcal{E}_i)$, (iii) $(\mathcal{P}_*\mathcal{E}_i,\theta_i)\centernot\simeq(\mathcal{P}_*\mathcal{E}_j,\theta_j)$ $(i\neq j)$. We call the decomposition $(\mathcal{P}_*\mathcal{E},\theta)=\bigoplus_i(\mathcal{P}_*\mathcal{E}_i,\theta_i)\otimes \mathbb{C}^{m(i)}$ the canonical decomposition.
\end{corollary}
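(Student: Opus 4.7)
The plan is to derive this corollary directly from the cited Schur-type proposition. Existence of such a decomposition follows by starting from the polystable decomposition $(\mathcal{P}_*\mathcal{E},\theta)=\bigoplus_j(\mathcal{P}_*\mathcal{F}_j,\psi_j)$ guaranteed by the definition of $\mu_L$-polystability, where each $(\mathcal{P}_*\mathcal{F}_j,\psi_j)$ is $\mu_L$-stable with $\mu_L(\mathcal{P}_*\mathcal{F}_j)=\mu_L(\mathcal{P}_*\mathcal{E})$. Next I would introduce the equivalence relation $j\sim j'$ if $(\mathcal{P}_*\mathcal{F}_j,\psi_j)\simeq(\mathcal{P}_*\mathcal{F}_{j'},\psi_{j'})$, choose one representative $(\mathcal{P}_*\mathcal{E}_i,\theta_i)$ from each equivalence class, set $m(i)$ to be the cardinality of the class, and fix an isomorphism for each member of the class to rewrite $\bigoplus_{j\in[i]}(\mathcal{P}_*\mathcal{F}_j,\psi_j)\simeq(\mathcal{P}_*\mathcal{E}_i,\theta_i)\otimes\mathbb{C}^{m(i)}$. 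Conditions (i), (ii), (iii) are then immediate from the construction.

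For uniqueness, suppose there is another decomposition $(\mathcal{P}_*\mathcal{E},\theta)=\bigoplus_k(\mathcal{P}_*\mathcal{E}_k',\theta_k')\otimes\mathbb{C}^{m'(k)}$ satisfying the same conditions. Fix $i_0$ and consider, for each $k$, the composition of the inclusion of a single copy $(\mathcal{P}_*\mathcal{E}_{i_0},\theta_{i_0})\hookrightarrow(\mathcal{P}_*\mathcal{E},\theta)$ with the projection to one factor $(\mathcal{P}_*\mathcal{E}_k',\theta_k')$ inside its isotypic block. By the cited proposition, each such projection is either zero or an isomorphism; since both $(\mathcal{P}_*\mathcal{E}_{i_0},\theta_{i_0})$ and $(\mathcal{P}_*\mathcal{E}_k',\theta_k')$ are $\mu_L$-stable of the same slope, a nonzero map forces them to be isomorphic. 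The inclusion of $(\mathcal{P}_*\mathcal{E}_{i_0},\theta_{i_0})$ is not the zero map, so at least one $k_0$ with $(\mathcal{P}_*\mathcal{E}_{i_0},\theta_{i_0})\simeq(\mathcal{P}_*\mathcal{E}_{k_0}',\theta_{k_0}')$ must exist, and by condition (iii) it is unique. A rank count within that isotypic block then gives $m(i_0)=m'(k_0)$.

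The main point that requires care is the rigorous identification of the multiplicity spaces. Strictly speaking, one should interpret $\mathbb{C}^{m(i)}$ as the multiplicity space $\mathrm{Hom}((\mathcal{P}_*\mathcal{E}_i,\theta_i),(\mathcal{P}_*\mathcal{E},\theta))$, which by applying the cited proposition to both $\mathrm{Hom}$ and $\mathrm{End}$ (the latter gives $\mathrm{End}(\mathcal{P}_*\mathcal{E}_i,\theta_i)=\mathbb{C}\cdot\mathrm{Id}$) is canonically a vector space of dimension $m(i)$ independent of the chosen decomposition. The canonical isotypic decomposition $(\mathcal{P}_*\mathcal{E},\theta)=\bigoplus_i(\mathcal{P}_*\mathcal{E}_i,\theta_i)\otimes\mathrm{Hom}((\mathcal{P}_*\mathcal{E}_i,\theta_i),(\mathcal{P}_*\mathcal{E},\theta))$ obtained in this way is manifestly independent of any choice, proving uniqueness. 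The whole argument is a formal Schur-lemma-type deduction; no analytic or stability-theoretic difficulty remains once the cited proposition is in hand.
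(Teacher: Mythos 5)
Your argument is correct and is precisely the formal Schur-lemma deduction the paper intends: the paper itself offers no proof, stating only that the corollary is ``straightforward from the above result,'' and your grouping of the stable summands of the polystable decomposition into isotypic blocks, together with the uniqueness argument via the multiplicity spaces $\mathrm{Hom}((\mathcal{P}_*\mathcal{E}_i,\theta_i),(\mathcal{P}_*\mathcal{E},\theta))$ and $\mathrm{End}(\mathcal{P}_*\mathcal{E}_i,\theta_i)=\mathbb{C}\cdot\mathrm{Id}$, supplies exactly the omitted details. No gap remains.
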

\subsection{Filtered bundles}
\subsubsection{Local case}\label{local}
Let $U$ be an open neighborhood of $0\in\mathbb{C}^n$. Let $H_{U_i}:=U\cap\{z_i=0\}$ and $H_U:=\bigcup^l_{i=1}H_{U_i}$. Let $\mathcal{V}$ be a locally free $\mathcal{O}_U(\ast H)$-module. A filtered bundle $\mathcal{P}_*\mathcal{V}$ is a family of locally free $\mathcal{O}_U$-modules $\mathcal{P}_{\textit{\textbf{a}}}\mathcal{V}$ indexed by $\textit{\textbf{a}}\in\mathbb{R}^l$ such that 
\begin{itemize}
\item $\mathcal{P}_{\textit{\textbf{a}}}\mathcal{V}\subset\mathcal{P}_{\textit{\textbf{b}}}\mathcal{V}$ for $\textit{\textbf{a}}\leq\textit{\textbf{b}}$.
\item There exists a frame $(v_1,\dots,v_r)$ of $\mathcal{V}$ and tuples $a(v_j)\in\mathbb{R}$ $( j=1,\dots,l )$ such that 
\begin{equation*}
\mathcal{P}_{\textit{\textbf{b}}}\mathcal{V}=\bigoplus^r_{j=1}\mathcal{O}_U\bigg(\sum^l_{i=1}[b_i-a(v_j)]H_{U_i}\bigg)v_j.
\end{equation*}
Here for $c\in\mathbb{R}$, $[c]:=\mathrm{max}\{a\leq c| a\in\mathbb{Z}\}$.
\end{itemize}
Hence locally, a filtered bundle is a filtered sheaf that is locally free and has a frame compatible with filtration.
\subsubsection{Pullback of filtered bundles}
We use the same notation as in the previous section. Let $\varphi:\mathbb{C}^n\to\mathbb{C}^n$ be a map given by $\varphi(\xi_1,\dots,\xi_n)=(\xi^{m_1}_1,\dots,\xi^{m_{l}}_l,\xi_{l+1},\dots,\xi_n)$. We set $U':=\varphi^{-1}(U)$ and $H_{U',i}:=\varphi^{-1}(H_{U,i})$. We denote the induced ramified covering $U'\to U$ as $\varphi.$\par
For any $\textbf{\textit{b}}\in \mathbb{R}^l$, we set $\varphi^*(\textbf{\textit{b}})=(m_ib_i)\in\mathbb{R}^l$. Let $\mathcal{P}_*\mathcal{V}$ be a filtered bundle on $(U,H_U)$.

\subsubsection{Global case}
In this section, we assume $X$ to be a complex manifold and $H=\bigcup_{i\in\Lambda}H_i$ to be a normal crossing divisor of it.\par
Let $\mathcal{V}$ be a locally free $\mathcal{O}_X(\ast H)$-module. A filtered bundle $\mathcal{P}_*\mathcal{V}$ over $\mathcal{V}$ is a filtered sheaf over $\mathcal{V}$ such that it is locally written as in Section \ref{local}. We give some examples of filtered bundles.\par
Let $\mathcal{P}_*\mathcal{V}_1$ and $\mathcal{P}_*\mathcal{V}_2$ be filtered bundles. For $P\in H$, we take an admiisble coordinate neighborhood $(U_P,z_1,\dots,z_n)$ such that each and any $\mathcal{P}_{\textbf{\textit{a}}}\mathcal{V}_i|_{U_P}$ only depends on $\kappa_P(\textbf{\textit{a}})$. We define filtered bundles $\mathcal{P}_*(\mathcal{V}_1|_{U_P}\oplus\mathcal{V}_2|_{U_P})$, $\mathcal{P}_*\mathcal{V}_1|_{U_P}\otimes\mathcal{P}_*\mathcal{V}_2|_{U_P}$ and $\mathcal{P}_*(\mathcal{H}om(\mathcal{V}_1|_{U_P},\mathcal{V}_2|_{U_P}))$ on $U_P$ as,
\begin{align*}
\mathcal{P}_{\textbf{\textit{a}}}(\mathcal{V}_1|_{U_P}\oplus\mathcal{V}_2|_{U_P}):&=\mathcal{P}_{\textbf{\textit{a}}}\mathcal{V}_1|_{U_P}\oplus\mathcal{P}_{\textbf{\textit{a}}}\mathcal{V}_2|_{U_P},\\
\mathcal{P}_{\textbf{\textit{a}}}(\mathcal{V}_1|_{U_P}\otimes\mathcal{V}_2|_{U_P}):&=\sum_{\textbf{\textit{c}}_1+\textbf{\textit{c}}_2\leq \textbf{\textit{a}}}\mathcal{P}_{c_1}\mathcal{V}_1|_{U_P}\otimes\mathcal{P}_{c_2}\mathcal{V}_2|_{U_P},\\
\mathcal{P}_{\textbf{\textit{a}}}(\mathcal{H}om(\mathcal{V}_1|_{U_P},\mathcal{V}_2|_{U_P})):=\bigg\{&f\in\mathcal{H}om(\mathcal{V}_1|_{U_P},\mathcal{V}_2|_{U_P}) \mid f(\mathcal{P}_{\textbf{\textit{b}}}\mathcal{V}_1|_{U_P})\subset f(\mathcal{P}_{\textbf{\textit{a}+\textbf{\textit{b}}}}\mathcal{V}_2|_{U_P})(\forall \textbf{\textit{b}}\in\mathbb{R}^{l(P)})\bigg\}.
\end{align*}
Here $\textbf{\textit{a}}\in\mathbb{R}^{l(P)}$. We construct filtered bundles as above around for each $P\in H$. After taking a suitable covering of $X$, we can glue the filtered bundles and obtain unique filtered bundles $\mathcal{P}_*(\mathcal{V}_1\oplus\mathcal{V}_2)$, $\mathcal{P}_*(\mathcal{V}_1\otimes\mathcal{V}_2)$ and $\mathcal{P}_*(\mathcal{H}om(\mathcal{V}_1,\mathcal{V}_2))$ such that $\mathcal{P}_*(\mathcal{V}_1\oplus\mathcal{V}_2)|_{U_P}=\mathcal{P}_*(\mathcal{V}_1|_{U_P}\oplus\mathcal{V}_2|_{U_P})$, $\mathcal{P}_*(\mathcal{V}_1\otimes\mathcal{V}_2)|_{U_P}=\mathcal{P}_*(\mathcal{V}_1|_{U_P}\otimes\mathcal{V}_2|_{U_P})$ and $\mathcal{P}_*(\mathcal{H}om(\mathcal{V}_1,\mathcal{V}_2)|_{U_P})=\mathcal{P}_*(\mathcal{H}om(\mathcal{V}_1|_{U_P},\mathcal{V}_2|_{U_P}))$ holds for any $P\in H$. We denote these filtered bundles $\mathcal{P}_*\mathcal{V}_1\oplus\mathcal{P}_*\mathcal{V}_2$, $\mathcal{P}_*\mathcal{V}_1\otimes\mathcal{P}_*\mathcal{V}_2$ and $\mathcal{H}om(\mathcal{P}_*\mathcal{V}_1,\mathcal{P}_*\mathcal{V}_2)$.\par
\par
Let $\mathcal{P}_*\mathcal{V}$ be a filtered bundle and let $\mathcal{V}'\subset\mathcal{V}$ be a locally free sub $\mathcal{O}_X(\ast H)$-module of $\mathrm{rank}\mathcal{V}'<\mathrm{rank}\mathcal{V}.$ We obtain a filtered bundle $\mathcal{P}_*\mathcal{V}'$ (See section \ref{f sh}). 
\begin{remark}
Let $\mathcal{V}',\mathcal{V}''\subset\mathcal{V}$ be locally free subsheaves. We note that even if $\mathcal{V}=\mathcal{V}'\oplus\mathcal{V}''$ holds, $\mathcal{P}_*\mathcal{V}=\mathcal{P}_*\mathcal{V}'\oplus\mathcal{P}_*\mathcal{V}''$ does not always hold. Here $\mathcal{P}_*\mathcal{V}', \mathcal{P}_*\mathcal{V}''$ is the induced filtration from $\mathcal{P}_*\mathcal{V}$. We say that the $\mathcal{P}_*\mathcal{V}$ is compatible with decomposition if $\mathcal{P}_*\mathcal{V}=\mathcal{P}_*\mathcal{V}'\oplus\mathcal{P}_*\mathcal{V}''$holds.\par
We give a very easy example of a filtered bundle that is not compatible with decomposition. Let $U$ be an open neighborhood of $0\in\mathbb{C}$. Let $\mathcal{V}:=\mathcal{O}_U(\ast 0)e_1\oplus\mathcal{O}_U(\ast 0)e_2$. For every $a\in\mathbb{R}$, we set 
\begin{equation*}
\mathcal{P}_a\mathcal{V}:=\mathcal{O}_U\big([a] 0\big)e_1\oplus\mathcal{O}_U\bigg(\bigg[a+\frac{1}{2}\bigg] 0\bigg)e_2.
\end{equation*}
We set $\mathcal{V}_1:=\mathcal{O}_U(\ast 0)(e_1+e_2)$ and $\mathcal{V}_2:=\mathcal{O}_U(\ast 0)(e_1-e_2)$. It is easy to see that $\mathcal{V}=\mathcal{V}_1\oplus\mathcal{V}_2$ holds. Let $\mathcal{P}_*\mathcal{V}_1$ and $\mathcal{P}_*\mathcal{V}_2$ be the induced filtered bundle. The decomposition $\mathcal{V}=\mathcal{V}_1\oplus\mathcal{V}_2$ is not compatible with filtration. For example, take $a=\frac{1}{2}$. Then 
\begin{align*}
\mathcal{P}_{\frac{1}{2}}&\mathcal{V}=\mathcal{O}_U e_1\oplus\mathcal{O}_U\frac{e_2}{z},\\
\mathcal{P}_{\frac{1}{2}}\mathcal{V}_1&=\mathcal{O}_U(\ast 0)(e_1+e_2)\cap\mathcal{P}_{\frac{1}{2}}\mathcal{V}=\mathcal{O}_U(e_1+e_2),\\
\mathcal{P}_{\frac{1}{2}}\mathcal{V}_2&=\mathcal{O}_U(\ast 0)(e_1-e_2)\cap\mathcal{P}_{\frac{1}{2}}\mathcal{V}=\mathcal{O}_U(e_1-e_2).
\end{align*}
Hence the decomposition is not compatible with the filtration. Obviously, if we set $\mathcal{V}'_1:=\mathcal{O}_U(\ast 0)(e_1)$ and $\mathcal{V}'_2:=\mathcal{O}_U(\ast 0)(e_2)$, then the decomposition is compatible with the filtration.
\end{remark}
Let rank$\mathcal{V}=r$ and $\mathcal{P}_*\mathcal{V}$ be a filtered bundle over it. We obtain a filtered bundle $\mathcal{P}_*\mathcal{V}^{\otimes r}$ over $\mathcal{V}^{\otimes r}$ as above. We obtain a filtered bundle $\mathrm{det} (\mathcal{P}_*\mathcal{V})$ over $\mathrm{det}\mathcal{V}\subset\mathcal{V}^{\otimes r} $ by the canonical way. \par
We construct a filtered bundle over $\mathcal{P}^{(0)}_*(\mathcal{O}_X(\ast H))$ over $\mathcal{O}_X(\ast H)$. Let $P\in H$ and $(U_P,z_1,\dots,z_n)$ be the admissible coordinate of $P$. For $\textbf{\textit{a}}\in\mathbb{R}^{\Lambda}$, we define 
\begin{equation*}
\mathcal{P}^{(0)}_{\textbf{\textit{a}}}(\mathcal{O}_X(\ast H))|_{U_P}:=\mathcal{O}_X\bigg(\sum^{l(P)}_{i=1} [\kappa(\textbf{\textit{a}})_i]H_i\bigg)
\end{equation*}
here $\kappa(\textbf{\textit{a}})_i$ is the $i$-th component of $\kappa(\textbf{\textit{a}})$ and for $a\in\mathbb{R}$, $[a]:=\mathrm{max}\{n\in\mathbb{Z} | n\leq a\}$. We then glue the filtered bundle above and obtain the filtered bundle $\mathcal{P}^{(0)}_{\ast}(\mathcal{O}_X(\ast H))$. Let $\mathcal{P}_*\mathcal{V}$ be a filtered bundle over $\mathcal{V}$. We have a  filtered bundle $\mathcal{P}_*\mathcal{V}^\vee:=\mathcal{H}om(\mathcal{P}_*\mathcal{V},\mathcal{P}^{(0)}_*(\mathcal{O}_X(\ast H))$.
\subsubsection{Induced bundles and filtrations}
We use the same notation as the previous section.\par
Let $I\subset\Lambda$ be any subset and $\bm{\delta}_I\in\mathbb{R}^{\Lambda}$ be the element such that the $j$-th component is 0 if $j\in\Lambda\backslash I$ and 1 if $j\in\Lambda$. Let $H_I:=\bigcap_{i\in I}H_i$ and $\partial H_I:=H_I\backslash\big( \bigcup_{i\in\Lambda}H_i\big)$.  \par
Let $\mathcal{P}_*\mathcal{V}$ be filtered bundle over $(X,H)$. In this section, we introduce some subsheaves of $\mathcal{P}_{\textbf{\textit{a}}}\mathcal{V}|_{H_I}(\textbf{\textit{a}}\in\mathbb{R}^{\Lambda})$. We use these subsheaves to define  Chern characters for $\mathcal{P}_*\mathcal{V}$ in the next section. \par
Let $i\in\Lambda$. Let $\textbf{\textit{a}}\in\mathbb{R}^{\Lambda}$ and for $a_i-1< b\leq a_i$, let $\textbf{\textit{a}}(b,i):=\textbf{\textit{a}}+(b-a_i)\bm{\delta}_i$. We want to introduce a filtration on $\mathcal{P}_{\textbf{\textit{a}}}\mathcal{V}|_{H_i}$. First, we define $^iF_b(\mathcal{P}_{\textbf{\textit{a}}}\mathcal{V}|_{H_i})$ as 
\begin{equation*}
^iF_b(\mathcal{P}_{\textbf{\textit{a}}}\mathcal{V}|_{H_i}):=\mathcal{P}_{\textbf{\textit{a}}(b,i)}\mathcal{V}|_{H_i}\bigg/\mathcal{P}_{\textbf{\textit{a}}(a_i-1,i)}\mathcal{V}|_{H_i}.
\end{equation*}
This is a locally free $\mathcal{O}_{H_i}$-module and it is a subbundle of $\mathcal{P}_{\textbf{\textit{a}}}\mathcal{V}|_{H_i}.$ Hence $^iF_\ast$ gives a increasing filtration on $\mathcal{P}_{\textbf{\textit{a}}}\mathcal{V}|_{H_i}$ indexed by $(a_i-1,a_i]$.\par
For general $I\subset \Lambda$, we introduce a family of subbundle of $\mathcal{P}_{\textbf{\textit{a}}}\mathcal{V}|_{H_I}$. Let $\textbf{\textit{a}}_I$ be the image of $\textbf{\textit{a}}$ of the natural projection $\mathbb{R}^\Lambda\to\mathbb{R}^I$. Let $(\textbf{\textit{a}}_I-\bm{\delta}_I,\textbf{\textit{a}}_I]:=\prod_{i\in I}(a_i-1,a_i]$. For any $\textbf{\textit{b}}\in(\textbf{\textit{a}}_I-\bm{\delta}_I,\textbf{\textit{a}}_I]$, we set 
\begin{equation*}
^IF_{\textbf{\textit{b}}}(\mathcal{P}_{\textbf{\textit{a}}}\mathcal{V}|_{H_I}):=\bigcap_{i\in I}{^i}F_{b_i}\big(\mathcal{P}_{\textbf{\textit{a}}}\mathcal{V}|_{H_i}\big).
\end{equation*}  
From the local description of  filtered bundles, for any $P\in H_I$, there exists a neighborhood $X_P$ of P in $X$ and a non-canonical decomposition 
\begin{equation*}
\mathcal{P}_{\textbf{\textit{a}}}\mathcal{V}|_{X_P\cap H_I}=\bigoplus_{\textbf{\textit{b}}\in(\textbf{\textit{a}}_I-\bm{\delta}_I,\textbf{\textit{a}}_I] }\mathcal{G}_{P, \textbf{\textit{b}} }
\end{equation*}
such that the following holds for any $\textbf{\textit{c}}\in (\textbf{\textit{a}}_I-\bm{\delta}_I,\textbf{\textit{a}}_I]$
\begin{equation*}
^IF_{\textbf{\textit{c}}}(\mathcal{P}_{\textbf{\textit{a}}}\mathcal{V}|_{X_P\cap H_I})=\bigoplus_{\textbf{\textit{b}}\leq\textbf{\textit{c}}}\mathcal{G}_{P, \textbf{\textit{b}}}.
\end{equation*}
Hence for any $\textbf{\textit{c}}\in(\textbf{\textit{a}}_I-\bm{\delta}_I,\textbf{\textit{a}}_I]$, we obtain the following locally free $\mathcal{O}_{H_I}$-modules:
\begin{equation*}
^I\mathrm{Gr}^F_{\textbf{\textit{c}}}(\mathcal{P}_{\textbf{\textit{a}}}\mathcal{V}):=\frac{^IF_{\textbf{\textit{c}}}(\mathcal{P}_{\textbf{\textit{a}}}\mathcal{V}|_{H_I})}{\sum_{\textbf{\textit{b}}\lneq\textbf{\textit{c}}} {^I}F_{\textbf{\textit{b}}}(\mathcal{P}_{\textbf{\textit{a}}}\mathcal{V}|_{H_I})}.
\end{equation*}
Here $(b_i)=\textbf{\textit{b}}\lneq\textbf{\textit{c}}=(c_i)$ means that $b_i\leq c_i$ for any $i$ and $\textbf{\textit{b}}\neq\textbf{\textit{c}}$.
We note that $^I\mathrm{Gr}^F_{\textbf{\textit{c}}}(\mathcal{P}_{\textbf{\textit{a}}}\mathcal{V})$ forms a subbundle of $\mathcal{P}_{\textbf{\textit{a}}}\mathcal{V}|_{H_I}$ on the irreducible component of $H_I$.
\subsubsection{First Chern class and Second Chern class for filtered bundles}
We use the same notation as in the previous section.\par
In this section, we recall the definition of the first Chern class and the second Chern character for filtered bundles. Let $\mathcal{P}_*\mathcal{V}$ be a filtered bundle over $(X, H)$.
In Section \ref{slope}, we recalled the definition of the first Chren class for filtered sheaves. Since filtered bundles are filtered sheaves, the first Chern class of filtered bundles is defined as follows.
\begin{equation*}
c_1(\mathcal{P}_*\mathcal{V})=c_1(\mathcal{P}_{\textbf{\textit{a}}}\mathcal{V})-\sum_{i\in\lambda}\sum_{a_i-1<b\le a_i}b\cdot \mathrm{rank}^i\mathrm{Gr}^F_{b}(\mathcal{P}_{\textbf{\textit{a}}}\mathcal{V}|_{H_i})\cdot[H_i]\in H^2(X,\mathbb{R}).
\end{equation*}
Let $\textrm{Irr}(H_i\cap H_j)$ be the set of irreducible components of $H_i\cap H_j$. For $C\in \textrm{Irr}(H_i\cap H_j)$, let $[C]\in H^4(X,\mathbb{R})$ be the induced cohomology class and let $^C\mathrm{Gr}^F_{(c_i,c_j)}(\mathcal{P}_{\textbf{\textit{a}}}\mathcal{V})$ be the restriction of $^{(i,j)}\mathrm{Gr}^F_{(c_i,c_j)}(\mathcal{P}_{\textbf{\textit{a}}}\mathcal{V})$ to $C$. Let  $\iota_{i^\ast}:H^2(H_i,\mathbb{R})\to H^4(X,\mathbb{R})$ be the Gysin map induced by $\iota_i:H_i\to X$. The second Chern character for filtered bundles is defined as follows.
\begin{align*}
\mathrm{ch_2}(\mathcal{P}_*\mathcal{V}):=&\mathrm{ch_2}(\mathcal{P}_{\textbf{\textit{a}}}\mathcal{V})-\sum_{i\in\Lambda}\sum_{a_i-1<b\le a_i}b\cdot \iota_{i^\ast}(c_1(^i\mathrm{Gr}^F_{b}(\mathcal{P}_{\textbf{\textit{a}}}\mathcal{V}|_{H_i})))\\
&+\frac{1}{2}\sum_{i\in\Lambda}\sum_{a_i-1<b\le a_i}b^2\cdot\mathrm{rank}(^i\mathrm{Gr}^F_{b}(\mathcal{P}_{\textbf{\textit{a}}}\mathcal{V}))[H_i]^2\\
&+\frac{1}{2}\sum_{i,j\in\Lambda^2,i\neq j}\sum_{C\in \textrm{Irr}(H_i\cap H_j)}\sum_{a_i-1<c_i\leq a_i, a_j-1<c_j\leq a_j}c_i\cdot c_j\mathrm{rank} ^C\mathrm{Gr}^F_{(c_i,c_j)}(\mathcal{P}_{\textbf{\textit{a}}}\mathcal{V})\cdot [C].
\end{align*}
\subsection{Prolongation of vector bundles}
Let $X$ be a complex manifold and $H=\cup_{i\in\Lambda}H_i$ be a normal crossing hypersurface. Let $(E,\overline\partial_E)$ be a holomorphic vector bundle over $X \backslash H$ and $h$ be a hermitian metric of $E$. We define a presheaf $\widetilde{\mathcal{P}^h_{\textit{\textbf{a}}}E}$ on $X$  such that for an open set $U$ of $X$, $\widetilde{\mathcal{P}^h_{\textit{\textbf{a}}}E}(U)$ is a set of holomorphic section of $E$ on $U$ which satisfies the following growing condition along $U\cap H$:
\begin{itemize}
\item Let $P\in H$ and $(U_P,z_1,\dots,z_n)$ be an admissible neighborhood of $P$ such that $\overline{U}_P\subset U$. Let $\textbf{\textit{c}}:=\kappa_P(\textbf{\textit{a}})$. A holomorphic section $s$ of $E$ on $U$ is $s\in \mathcal{P}^h_{\textit{\textbf{a}}}E(U)$ when $s$ satisfies the following estimate on $U_P$
\begin{equation*}
|s|_h \leq O\bigg(\prod^{c}_{i=1}|z_i|^{-c_i-\epsilon}\bigg)
\end{equation*}
for any $\epsilon\in\mathbb{R}_{>0}$.
\end{itemize}
We denote the sheafification of $\widetilde{\mathcal{P}^h_{\textit{\textbf{a}}}E}$ as $\mathcal{P}^h_{\textit{\textbf{a}}}E$.
We obtain a $\mathcal{O}_X$-module $\mathcal{P}^h_{\textit{\textbf{a}}}E$ and we obtain a $\mathcal{O}_X(\ast H)$-module $\mathcal{P}^h_{\ast}E:=\bigcup_{\textit{\textbf{a}}\in\mathbb{R}^{\Lambda}}\mathcal{P}^h_{\textit{\textbf{a}}}E$.
\begin{definition}
Let $\mathcal{P}_{\ast}\mathcal{V}$ be a filtered bundle over $(X, H)$. Let $(E,\overline\partial_E)$ be a holomorphic bundle obtained from the restriction of  $\mathcal{V}$ to $X- H$. Let $h$ be a hermitian metric of $E$. $h$ is called adapted if $\mathcal{P}^h_{\ast}E=\mathcal{P}_{\ast}\mathcal{V}$ stands.
\end{definition}
We remark that in general, we do not know whether $\mathcal{P}^h_{\ast}E$ is locally free or not. However, it was proved in \cite[Theorem 21.3.1]{M2} that when the metric $h$ is $\textit{acceptable}$ and $\mathrm{det}(E,\overline\partial_E,h)$ is flat, $\mathcal{P}^h_{\ast}E$ is locally free. We say that $h$ is acceptable when the following condition holds:
\begin{itemize}
\item Let $P\in H$ and let $(U_P,z_1,\dots,z_n)$ be an admissible neighborhood of $P$. We regard as $U_P=\prod^n_{i=1}\{|z_i|<1\}$. Let $g_P$ be a Poincar\'e like metric on $U_P\backslash U_P\cap H$. The metric $h$ is called acceptable around $P$ when the curvature of the Chern connection is bounded with respect to $g_P$ and $h$. $h$ is called acceptable if it is acceptable around any $P\in H$.
\end{itemize}
\subsection{Good filtered Higgs bundle}
Throughout this section, we assume $X$ to be a complex manifold and $H=\bigcup_{i\in\Lambda}H_i$ to be a simple normal crossing hypersurface of it.
\subsubsection{Good set of Irregular values}
Let $P\in H$. Let $(U_P,z_1,\dots,z_n)$ be an admissible coordinate around $P$. We denote the stalk of $\mathcal{O}_X(\ast H)$ at $P$ as $\mathcal{O}_X(\ast H)_P$. Let $f\in \mathcal{O}_X(\ast H)_P$. If $\mathcal{O}_{X,P}$, we set $\mathrm{ord}(f)=(0,\dots0)\in\mathbb{R}^{l(P)}$. If there exsits a $g\in\mathcal{O}_{X,P}$, $g(P)\neq0$ and a  $\textbf{\textit{n}}\in\mathbb{Z}_{< 0}^{l(P)}$ such that $g=f\prod z_i^{-n_i}$, we set $\mathrm{ord}(f)=\textbf{\textit{n}}$. Otherwise, $\mathrm{ord}(f)$ is not defined. Note that when $\mathrm{dim}X=1$ and when $f$ has at least  a simple pole at $P$, then $\mathrm{ord}(f)$ is the usual order.\par
For any $\mathfrak{a}\in\mathcal{O}_X(\ast H)_P/\mathcal{O}_{X,P}$, we take a lift $\tilde{\mathfrak{a}}\in\mathcal{O}_X(\ast H)_P$. If $\mathrm{ord}(\mathfrak{a})$ is defined, we set $\mathrm{ord}(\mathfrak{a}):=\mathrm{ord}(\tilde{\mathfrak{a}})$. Otherwise $\mathrm{ord}(\mathfrak{a})$ is not defined. $\mathrm{ord}(\mathfrak{a})$ does not depend on the lift.\par
Let $\mathcal{I}_P\subset\mathcal{O}_X(\ast H)_P/\mathcal{O}_{X,P}$ be finite subset. We say that $\mathcal{I}_P$ is called a good set of irregular values if 
\begin{itemize}
\item $\mathrm{ord}(\mathfrak{a})$ is defined for any $\mathrm{ord}(\mathfrak{a})\in\mathcal{I}_P$
\item $\mathrm{ord}(\mathfrak{a-b})$ is defined for any $\mathrm{ord}(\mathfrak{a}), \mathrm{ord}(\mathfrak{b})\in\mathcal{I}_P$
\item $\{\mathrm{ord}(\mathfrak{a-b})| \mathfrak{a}, \mathfrak{b}\in\mathcal{I}_P\}$ is totally orded with respect to the order $\leq_{\mathbb{Z}^{l(P)}}$.
\end{itemize}
Note that when $\mathrm{dim}X=1$, then any finite subset of $\mathcal{O}_X(\ast H)_P/\mathcal{O}_{X,P}$ is a good set of irregular values.
\subsubsection{Good filtered Higgs bundle}
Let $(\mathcal{P}_{*}\mathcal{V},\theta)$ be a filtered Higgs bundle. Let $P\in X$ and let $\mathcal{O}_{X,\widehat{P}}$ be the completion of the local ring $\mathcal{O}_{X,P}$ with respect to its maximal ideal.\par
We say that $(\mathcal{P}_{*}\mathcal{V},\theta)$ is called unramifiedly good at $P$ if there exisits a good set of irregular values $\mathcal{I}_P$ and exsits a decomposition of Higgs bunlde
\begin{equation*}
(\mathcal{P}_{*}\mathcal{V},\theta)\otimes\mathcal{O}_{X,\widehat{P}}=\bigoplus_{\mathfrak{a}\in\mathcal{I}_P}(\mathcal{P}_{*}\mathcal{V}_{\mathfrak{a}},\theta_{\mathfrak{a}})
\end{equation*}
such that $(\theta_{\mathfrak{a}}-\mathrm{d}\widetilde{\mathfrak{a}}\mathrm{Id}_{\mathcal{V}_{\mathfrak{a}}})\mathcal{P}_{\textit{\textbf{a}}}\mathcal{V}_{\mathfrak{a}}\subset\mathcal{P}_{\textit{\textbf{a}}}\mathcal{V}_{\mathfrak{a}}\otimes\Omega^1_X(\mathrm{log}H)$ for evey $\textit{\textbf{a}}\in\mathbb{R}^\Lambda$. Here $\widetilde{\mathfrak{a}}$ is the lift of $\mathfrak{a}$.\par
$(\mathcal{P}_{*}\mathcal{V},\theta)$ is called good at $P$ if there exists a neighborhood $U_P$ and  a covering map $\varphi_P:U'_P\to U_P$ ramified over $H\cap U_P$ such that $\varphi^\ast_P(\mathcal{P}_{*}\mathcal{V},\theta)$ is unramified good at $\varphi^{-1}_P(P)$.\par
$(\mathcal{P}_{*}\mathcal{V},\theta)$ is called good (resp. unramifiedly good) if it is good (resp. unramfiedly good) at any point of $H$.
\subsection{Good Wild Harmonic Bundles}
\subsubsection{Local condition for Higgs fields}
Let $U:=\prod^n_{i=1}\big\{|z_i|<1\big\}$ and $H_{U_i}:=U\cap\{z_i=0\}$ and $H_U:=\bigcup^l_{i=1}H_{U_i}$. Let $(E,\overline\partial_E,\theta)$ be a Higgs bundle on $U - H_U$. The Higgs field $\theta$ has an expression
\begin{equation*}
\theta=\sum^l_{i=1}\frac{F_i}{z_i}dz_i+\sum^n_{i=l+1}G_idz_i.
\end{equation*}
Let $T$ be a formal variable. We have characteristic polynomials 
 \begin{equation*}
 \mathrm{det}(T-F_i(z))=\sum_k A_{i,k}(z) T^k, \mathrm{det}(T-G_i(z))=\sum_k B_{i,k}(z)T^k
 \end{equation*}
 where $ A_{i,k}(z), B_{i,k}(z) $ are holomorphic functions on $U-H_U$.
 \begin{definition}
 We say that $\theta$ is tame if $A_{i,k}(z), B_{i,k}(z)$ are holomorphic functions on $U$ and if the restriction of $A_{i,k}$ to $H_{U_i}$ are constant for any $j$ and $k$.
 \end{definition}
 \begin{definition}
 \hfill
  \begin{itemize}
 \item We say that $\theta$ is unramfiedly good if there exists a good set of irregular value $\mathrm{Irr}(\theta)\subset M(U, H_U)\big/H(X)$ and a decomposition
 \begin{equation*}
 (E,\theta)=\bigoplus_{\mathfrak{a}\in\mathrm{Irr}(\theta)}(E_{\mathfrak{a}},\theta_{\mathfrak{a}})
 \end{equation*}
 such that each $\theta_{\mathfrak{a}}-d\widetilde{\mathfrak{a}}\cdot\mathrm{Id}_{E_{\mathfrak{a}}}$ is tame. Here $\widetilde{\mathfrak{a}}$ is the lift of $\mathfrak{a}$.
 \item For  $e\in\mathbb{Z}_{>0}$, we define the covering map $\phi_e:U\to U$ as $\phi(z_1,\dots,z_n)=(z^e_1,\dots,z^e_l,z_{l+1},\dots,z_n)$. 
  We say that $\theta$ is good if there exists a $e\in\mathbb{Z}_{>0}$ and the pullback of $(E,\overline\partial_E,\theta)$ by $\phi_e$ is unramifiedly good.
 \end{itemize}
 \end{definition}
 \subsubsection{Global condition of Higgs fields and Good Wild Harmonic bundles}
 Let $X$ be a complex manifold and $H$ be a normal crossing hypersurface. Let $(E,\overline\partial_E, \theta)$ be a Higgs bundle on $X-H$.
 \begin{definition}
 \hfill
  \begin{itemize}
\item We say that $\theta$ is (unramifiedly) good at $P\in H$ if it is (unramifiedly) good on an admissible coordinate neighborhood of $P$.
\item We say that $\theta$ is (unramifiedly) good on $(X,H)$ if it is (unramifiedly) good for any $P\in H$.
 \end{itemize}
 \end{definition}
We next recall \textit{good wild harmonic bundles}. Let $h$ be a pluri-harmonic metric of $(E,\overline\partial_E, \theta)$ (i.e.
$(E,\overline\partial_E, \theta, h)$ is a harmonic bundle on $X-H$).
 \begin{definition}
 We say that  $(E,\overline\partial_E, \theta, h)$ is a (unramifiedly) good wild harmonic bundle on $(X,H)$ if $\theta$ is (unramifiedly) good on $(X,H)$.
   \end{definition} 
 \subsection{Kobayashi-Hitchin Correspondence}
Let $X$ be a connected smooth projective variety and $H$ be a simple normal crossing divisor. Let $L$ be any ample line bundle.
\par
 In \cite{M1, M2} Mochizuki proved that there is a one-on-one correspondence between $\mu_L$-polystable good filtered Higgs bundles with vanishing Chern classes and good wild harmonic bundles. This correspondence is called Kobayashi-Hitchin Correspondence.
 \begin{proposition}[{\cite[Proposition 13.6.1 and 13.6.4]{M1}}]\label{mor}
 Let $(E,\theta,h)$ be a good wild harmonic bundle on $(X,H)$.
 \begin{itemize}
 \item $(\mathcal{P}^h_{\ast}E,\theta)$ is  $\mu_L$-polystable with $\mu_L(\mathcal{P}^h_{\ast}E)=0$.
 \item $c_1(\mathcal{P}^h_{\ast}E)=0$ and $ \int_X\mathrm{ch}_2(\mathcal{P}_{\ast}\mathcal{V})c_1(L)^{\mathrm{dim}X-2}=0$ holds.
 \item Let $h'$ be another pluri-harmonic metric of $(E,\theta, h)$ such that $\mathcal{P}^{h'}_{\ast}E=\mathcal{P}^h_{\ast}E.$ Then there exists a decomposition of the Higgs bundle $(E,\theta)=\oplus_i(E_i,\theta_i)$ such that (i) the decomposition is orthogonal with respect to both $h$ and $h'$, (ii) $h|_{E_i}=a_ih'|_{E_i}$ for some $a_i>0$.
  \end{itemize}
 \end{proposition}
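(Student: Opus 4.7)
The three bullets correspond to three familiar phenomena in the theory of harmonic bundles, each of which I would handle by a technique tied to the pluri-harmonicity of $h$. For the first and second bullets, the key input is that $\nabla_h=\partial_h+\overline\partial_E+\theta+\theta_h^\dagger$ is flat, so the curvature of the Chern connection $(\partial_h,\overline\partial_E)$ satisfies $F_h=-[\theta,\theta_h^\dagger]$. Taking trace gives $\mathrm{tr}(F_h)=0$, which already yields $c_1(E,\overline\partial_E,h)=0$ as a $(1,1)$-form on $X\setminus H$; the task for the second bullet is then to show that the boundary contribution from $H$ in the definition of $c_1(\mathcal{P}^h_\ast E)$ also vanishes, and similarly for $\mathrm{ch}_2$. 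Here I would appeal to the unramified good decomposition of $\theta$ near each $P\in H$: on each irregular summand $(E_{\mathfrak{a}},\theta_{\mathfrak{a}})$, norm estimates for $h$ (Simpson's norm estimate, adapted to the wild case by Mochizuki) relate the parabolic weights of $\mathcal{P}^h_\ast E$ to the growth exponents of flat sections, so the Chern--Weil integrals converge and the boundary terms exactly cancel the discrete contributions coming from the parabolic weights.

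For polystability I would follow Simpson's argument. Let $\mathcal{V}'\subset\mathcal{V}$ be a saturated sub-Higgs sheaf. On $X\setminus H$ the orthogonal projection $\pi:E\to E'$ with respect to $h$ is a measurable endomorphism with $\pi=\pi^{\ast}=\pi^2$, and the sub-Higgs condition translates into $(\mathrm{Id}-\pi)\overline\partial_E\pi=0$ and $(\mathrm{Id}-\pi)[\theta,\pi]=0$. A Chern--Weil computation against $c_1(L)^{\dim X-1}$ then expresses the difference between $\mu_L(\mathcal{P}_\ast\mathcal{V}')$ and $\mu_L(\mathcal{P}^h_\ast E)$ as $-\int_{X\setminus H}(|\overline\partial_E\pi|^2+|[\theta,\pi]|^2)\,\omega_L^{\dim X-1}$ plus boundary corrections controlled by the parabolic filtration, which are non-positive and finite under the good wild hypothesis. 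Hence $\mu_L(\mathcal{P}_\ast\mathcal{V}')\le\mu_L(\mathcal{P}^h_\ast E)=0$, and equality forces $\overline\partial_E\pi=0$ and $[\theta,\pi]=0$, yielding an $h$-orthogonal decomposition $(E,\overline\partial_E,\theta)=(E',\overline\partial_{E'},\theta')\oplus(E'',\overline\partial_{E''},\theta'')$ of harmonic bundles. Iterating this splitting gives $\mu_L$-polystability.

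For the last bullet I would set $s:=h^{-1}h'$, viewed as a section of $\mathrm{End}(E)$ on $X\setminus H$ which is positive self-adjoint with respect to both metrics. Comparing the decompositions $\nabla_h=\partial_h+\overline\partial_E+\theta+\theta_h^\dagger$ and $\nabla_{h'}=\partial_{h'}+\overline\partial_E+\theta+\theta_{h'}^\dagger$, the pluri-harmonicity of both metrics together with the hypothesis $\mathcal{P}^{h'}_\ast E=\mathcal{P}^h_\ast E$ translates into a Bochner-type identity which forces $s$ to be covariantly constant for $\nabla_h$; equivalently $\overline\partial_E s=0$ and $[\theta,s]=0$. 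Thus $s$ is a holomorphic endomorphism of the Higgs bundle with positive spectrum, and its eigenvalue decomposition produces the desired splitting $(E,\overline\partial_E,\theta)=\bigoplus_i(E_i,\overline\partial_{E_i},\theta_i)$ with $h'|_{E_i}=a_i h|_{E_i}$; the eigenspaces are simultaneously orthogonal for $h$ and $h'$ because $s$ is self-adjoint for both. Across all three bullets the main obstacle I expect is the boundary analysis at $H$: checking that the good wild condition makes the Chern--Weil integrals absolutely convergent and produces precisely the weight contributions entering the definitions of $c_1$ and $\mathrm{ch}_2$ for filtered bundles, and that the Bochner identity for $s$ has no uncontrolled boundary error. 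This is exactly the technical heart of Mochizuki's treatment, and I do not see a way to bypass it.
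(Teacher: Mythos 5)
The paper does not prove this proposition at all: it is quoted verbatim from Mochizuki (\cite[Propositions 13.6.1 and 13.6.4]{M1}) and used as a black box, so there is no ``paper's own proof'' to compare against. Measured against the actual proof in the cited reference, your outline correctly identifies the architecture --- the Hitchin equation $F_h=-[\theta,\theta_h^\dagger]$ and Chern--Weil theory for the vanishing statements, Simpson's second-fundamental-form computation with the $h$-orthogonal projection $\pi$ onto a saturated sub-Higgs sheaf for polystability, and the endomorphism $s=h^{-1}h'$ with a Bochner argument for the comparison of two adapted pluri-harmonic metrics. The eigenvalue decomposition of $s$ and the simultaneous orthogonality of the eigenspaces are also handled correctly.

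However, as a proof the proposal has a genuine gap, one you yourself flag: every one of the three bullets reduces to an analytic assertion at the divisor $H$ that is stated but not established. Concretely: (a) that $\mathcal{P}^h_\ast E$ is a locally free filtered bundle and that the Chern--Weil integrals of $h$ over the noncompact $X\setminus H$ converge and compute $c_1(\mathcal{P}^h_\ast E)$ and $\mathrm{ch}_2(\mathcal{P}^h_\ast E)$ including the weight and graded-rank corrections in their definitions; (b) that for a saturated sub-Higgs sheaf the induced filtration $\mathcal{P}_\ast\mathcal{V}'$ is the one adapted to the restricted metric, so that the degree formula $\deg_L(\mathcal{P}_\ast\mathcal{V}')=\int\mathrm{tr}(\pi\Lambda F_h)-\lVert\overline{\partial}_E\pi\rVert^2-\lVert[\theta,\pi]\rVert^2\le 0$ has no uncontrolled boundary error; and (c) that $s=h^{-1}h'$ is mutually bounded (this is where $\mathcal{P}^{h'}_\ast E=\mathcal{P}^h_\ast E$ is used) so that the integration by parts forcing $\overline{\partial}_E s=[\theta,s]=0$ is legitimate. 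All three rest on the norm estimates for good wild harmonic bundles near $H$, which occupy the bulk of \cite{M1} and cannot be waved through with the phrase ``boundary corrections controlled by the parabolic filtration.'' Since the proposition is imported rather than reproved in this paper, citing \cite{M1} is the appropriate resolution; but as a self-contained argument your text is a correct plan, not a proof.
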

 \begin{theorem}[{\cite[Theorem 2.23.]{M2}}]\label{KH0}
 Let $(\mathcal{P}_{\ast}\mathcal{V},\theta)$ be a good filtered Higgs bundle on $(X, H)$ and $(E,\overline\partial_E,\theta)$ be the Higgs bundle on $X\backslash H$ which is the restriction of $(\mathcal{P}_{\ast}\mathcal{V},\theta)$. \par
 Suppose that $(\mathcal{P}_{\ast}\mathcal{V},\theta)$ is $\mu_L$-polystable and satisfies the following vanishing condition:
 \begin{equation}\label{vanish}
 \mu_L(\mathcal{P}_{\ast}\mathcal{V})=0, \int_X\mathrm{ch}_2(\mathcal{P}_{\ast}\mathcal{V})c_1(L)^{\mathrm{dim}X-2}=0.
  \end{equation}
  Then there exists a pluri-harmonic metric $h$ for $(E,\overline\partial_E,\theta)$ such that $(\mathcal{V},\theta)|_{X\backslash H}\simeq(E,\theta)$ extends to $(\mathcal{P}_{\ast}\mathcal{V},\theta)\simeq(\mathcal{P}^h_{\ast}E,\theta)$.
  \end{theorem}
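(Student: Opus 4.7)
The plan is to follow the continuity-method strategy of Simpson-Mochizuki, adapted to the good wild filtered setting. As a preliminary reduction, I would use the canonical decomposition coming from $\mu_L$-polystability to write $(\mathcal{P}_*\mathcal{V},\theta) = \bigoplus_i (\mathcal{P}_*\mathcal{V}_i,\theta_i)\otimes\mathbb{C}^{m(i)}$ with each factor stable of slope zero. Since a direct sum of pluri-harmonic metrics (combined with the standard Hermitian metric on each multiplicity space) is again pluri-harmonic, and the prolongation commutes with the decomposition once each $h_i$ is adapted, it suffices to treat the stable case.

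For a stable good filtered Higgs bundle, the first step is to construct an \emph{initial metric} $h_0$ on $(E,\overline\partial_E,\theta)|_{X\setminus H}$ that is adapted to $\mathcal{P}_*\mathcal{V}$. Using the good (unramified after ramified cover) decomposition $(\mathcal{P}_*\mathcal{V},\theta)\otimes\mathcal{O}_{X,\widehat{P}} = \bigoplus_{\mathfrak{a}\in\mathcal{I}_P}(\mathcal{P}_*\mathcal{V}_\mathfrak{a},\theta_\mathfrak{a})$ in admissible local coordinates, and choosing Poincaré-type model metrics on each summand weighted by the filtration exponents, one builds $h_0$ locally and glues via a partition of unity. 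One checks that $h_0$ is acceptable (Chern curvature bounded relative to a Poincaré metric) and that the pseudo-curvature $G(h_0) := \Lambda_\omega F(h_0) + \Lambda_\omega[\theta,\theta^\dagger_{h_0}]$ lies in a suitable weighted $L^p$-space.

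The second step is to solve the Hermitian-Einstein equation $G(h) = 0$ for a metric $h = h_0\cdot s$ with $s$ positive self-adjoint. One runs a continuity method on the family $G(h_t) = -t\log s_t$: openness for $t>0$ is standard elliptic theory on the complement of $H$, and closedness reduces to a uniform $C^0$-estimate on $\log s_t$. The estimate is where $\mu_L$-stability enters crucially: a failure of the estimate, together with the Uhlenbeck-Yau trick adapted to filtered bundles, would produce a weakly holomorphic $\theta$-invariant sub-sheaf whose filtered prolongation $\mathcal{P}_*\mathcal{V}'\subset\mathcal{P}_*\mathcal{V}$ destabilizes, contradicting stability. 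This yields a Hermitian-Einstein metric $h$ adapted to $\mathcal{P}_*\mathcal{V}$.

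The final step is to upgrade from Hermitian-Einstein to pluri-harmonicity. This is done via the Bogomolov-Gieseker inequality for good filtered Higgs bundles with a Hermitian-Einstein metric, namely
\begin{equation*}
\int_X\bigl(2\,\mathrm{ch}_2(\mathcal{P}_*\mathcal{V}) - \tfrac{1}{\mathrm{rank}\mathcal{V}}c_1(\mathcal{P}_*\mathcal{V})^2\bigr) c_1(L)^{\mathrm{dim}X-2} \geq 0,
\end{equation*}
with equality forcing the traceless primitive part of $F(h) + [\theta,\theta^\dagger_h]$ to vanish, which is equivalent to $\nabla_h^2=0$. The hypothesis (\ref{vanish}) is exactly the equality case, so $h$ is pluri-harmonic. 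The main obstacle throughout is the analytic control near the wild singular locus: justifying the Chern-Weil identities that yield the Bogomolov-Gieseker inequality, the integration by parts producing the destabilizing subsheaf, and the identification $\mathcal{P}^h_*E \simeq \mathcal{P}_*\mathcal{V}$ at the end, all require sharp norm estimates on $\theta$ and $h$ governed by the irregular values $\mathcal{I}_P$ and the good decomposition, and this is the technical heart of Mochizuki's argument in \cite{M1,M2}.
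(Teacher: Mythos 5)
The paper offers no proof of this statement: it is quoted verbatim from Mochizuki (\cite[Theorem 2.23]{M2}, building on \cite{M1}) and used as a black box, so there is nothing internal to compare your argument against. Your outline is a faithful roadmap of the strategy behind the cited result --- reduction to the stable case via the canonical decomposition, construction of an acceptable initial metric from the good local decomposition, a Hermitian--Einstein metric obtained through stability and the Uhlenbeck--Yau destabilizing-subsheaf mechanism, and the equality case of Bogomolov--Gieseker forcing pluri-harmonicity under (\ref{vanish}) --- but as written it is a plan rather than a proof, since the analytic content you flag as ``the technical heart'' is precisely what the citation stands in for. One small point: with the usual conventions the inequality for a Hermitian--Einstein filtered Higgs bundle reads $\int_X\bigl(2\,\mathrm{ch}_2(\mathcal{P}_*\mathcal{V})-\tfrac{1}{\mathrm{rank}\mathcal{V}}c_1(\mathcal{P}_*\mathcal{V})^2\bigr)c_1(L)^{\mathrm{dim}X-2}\leq 0$, the reverse of the sign you wrote, though this does not affect the logic of the equality case.
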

\begin{remark}
We note that Theorem \ref{KH0} was proved not only for the Higgs bundles but for all $\lambda$-flat bundles. The $\lambda=1$ case was established in \cite{M1}.
\end{remark}
  \section{Good filtered Higgs bundles with skew-symmetric pairings}\label{main s}
\subsection{Pairings of filtered bundle}
Throughout this section, we assume $X$ to be a smooth projective variety and let $H=\bigcup_{i\in \Lambda} H_i$ be a normal crossing divisor of it, and $L$ to be an ample line bundle on $X$. However, we only use this assumption in Section 4.1.4. The results in other sections can generalized for any complex manifold and normal crossing hypersurfaces.
\subsubsection{Pairings of locally free $\mathcal{O}_X(\ast H)$-modules}
Let $\mathcal{O}_X(\ast H)$ be the sheaf of meromorphic function on $X$ whose poles are contained in $H$. We recall the pairings of $\mathcal{O}_X(\ast H)$-modules  following \cite{qm1}. \par
Let $\mathcal{V}$ be a locally free $\mathcal{O}_X(\ast H)$-module of finite rank. Let $\mathcal{V}^{\vee}:=\mathcal{H}om_{\mathcal{O}_X(\ast H)}(\mathcal{V},\mathcal{O}_X(\ast H))$ be the dual of $\mathcal{V}.$
The determinant bundle of $\mathcal{V}$ is denoted by 
det$(\mathcal{V}):=\bigwedge^{\textrm{rank}\mathcal{V}}\mathcal{V}.$ There exists a natural isomorphism $\textrm{det}(\mathcal{V}^\vee)\simeq\textrm{det}(\mathcal{V})^\vee.$ For a morphism $f:\mathcal{V}_1\to\mathcal{V}_2$ of locally free $\mathcal{O}_X(\ast H)$-modules, we have the dual $f^\vee:\mathcal{V}_2^\vee\to\mathcal{V}_1^\vee$. If rank($\mathcal{V}_1)$=rank$(\mathcal{V}_2$), then we have the induced morphism $\mathrm{det}(f):\mathrm{det}(\mathcal{V}_1)\to\mathrm{det}(\mathcal{V}_2)$.\par
A pairing $P$ of a pair of locally free $\mathcal{O}_X(\ast H)$-modules $\mathcal{V}_1$ and $\mathcal{V}_2$ is a morphism $P:\mathcal{V}_1\otimes\mathcal{V}_2\to\mathcal{O}_X(\ast D)$. It induces a morphism $\Psi_P:\mathcal{V}_1\to\mathcal{V}_2^\vee$ by $\Psi_P(u)(v):=P(u,v).$ Let $\textrm{ex}:\mathcal{V}_1\otimes\mathcal{V}_2\simeq\mathcal{V}_2\otimes\mathcal{V}_1$ be the morphism defined by $\textrm{ex}(u\otimes v)=v\otimes u$. We obtain a pairing $P\circ\textrm{ex}:\mathcal{V}_2\otimes\mathcal{V}_1\to\mathcal{O}_X(\ast H)$. We have $\Psi_P^\vee=\Psi_{P\circ\textrm{ex}}$. If rank $\mathcal{V}_1$=rank $\mathcal{V}_2$, we obtain the induced pairing $\textrm{det}P:\textrm{det}(\mathcal{V}_1)\otimes\textrm{det}(\mathcal{V}_2)\to\mathcal{O}_X(\ast H)$. We have $\textrm{det}(\Psi_P)=\Psi_{\textrm{det}(P)}.$\par
A pairing $P$ is called non-degenerate if $\Psi_P$ is an isomorphism. It is equivalent to that $P\circ\textrm{ex}$ is non-degenerate. It is also equivalent to be $\textrm{det}P$ is non-degenerate. If $P$ is non-degenerate, we obtain a pairing $P^\vee$ of $\mathcal{V}_2^\vee$ and $\mathcal{V}_1^\vee$ defined by $P\circ(\Psi_P^{-1}\otimes\Psi_{P\circ\textrm{ex}}).$\par
A pairing $P$ of locally free $\mathcal{O}_X(\ast H)$-module $\mathcal{V}$ is a morphism $P:\mathcal{V}\otimes\mathcal{V}\to\mathcal{O}_X(\ast H)$. It is called skew-symmetric if $P\circ\textrm{ex}=-P.$ Note that $\textrm{det}(P)$ is natural defined in this case. If $P$ is non-degenerate, then rank$:\mathcal{V}$ must be even and we have induced pairing $P^\vee$ of $\mathcal{V}^\vee.$
\subsubsection{Pairings of filtered bundles}
Let $\mathcal{P}_*\mathcal{V}_i$ $(i=1,2)$ be a filtered bundle on $(X, H)$. A pairing $P$ of $\mathcal{P}_*\mathcal{V}_1$ and $\mathcal{P}_*\mathcal{V}_2$ is a morphism between filtered bunlde 
\begin{equation*}
P: \mathcal{P}_*\mathcal{V}_1\otimes \mathcal{P}_*\mathcal{V}_2\to \mathcal{P}^{(0)}_*(\mathcal{O}_X(\ast H)).
\end{equation*}
We obtain a pairing $P\circ\textrm{ex}$ of $\mathcal{P}_*\mathcal{V}_2$ and $\mathcal{P}_*\mathcal{V}_1$.

From the pairing $P$, we also obtain the  following morphism 
\begin{equation*}
\Psi_P:\mathcal{P}_*\mathcal{V}_1\to \mathcal{P}_*\mathcal{V}_2^\vee.
\end{equation*}
\begin{definition}
P is called perfect if the morphism $\Psi_P$ is an isomorphism of filtered bundles.
 \end{definition}

Let $\mathcal{V}'_i\subset \mathcal{V}_i$ be a locally free $\mathcal{O}_X(\ast H)$-submodules. We also assume $\mathcal{V}'_i$ are saturated i.e. $\mathcal{V}_i/\mathcal{V}'_i$ are locally free. From a pairing $P$ of $\mathcal{P}_*\mathcal{V}_1$ and $\mathcal{P}_*\mathcal{V}_2$, we have the induced pairing $P'$ for $\mathcal{P}_*\mathcal{V}'_1$ and $\mathcal{P}_*\mathcal{V}'_2$. We have a sequence of sheaves:
\begin{equation*}
\mathcal{V}'_1\overset{i_1}{\longrightarrow}\mathcal{V}_1\overset{\Psi_P}{\longrightarrow}\mathcal{V}_2^\vee \overset{i^\vee_2}{\longrightarrow}\mathcal{V}_2^{'\vee}
\end{equation*}
where $i_1$ is the canonical inclusion and $i^\vee_2$ is the dual of the canonical inclusion. Note that $\Psi_{P'}=i_2^\vee\circ\Psi_P\circ i_1.$ Let $\mathcal{U}_1:=\mathrm{ker}({i_2^\vee\circ\Psi_P})$. It is a subsheaf of $\mathcal{V}_1$.
 \begin{lemma}\label{decomp}
 If $P$ and $P'$ are perfect, then we have the decomposition $\mathcal{V}_1=\mathcal{V}'_1\oplus \mathcal{U}_1.$
 \end{lemma}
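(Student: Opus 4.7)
The plan is to exhibit $\mathcal{V}'_1$ as the image of a canonical splitting of the map $j := i_2^\vee \circ \Psi_P : \mathcal{V}_1 \to \mathcal{V}_2^{'\vee}$, whose kernel is $\mathcal{U}_1$ by definition. The key observation is that by construction $j \circ i_1 = \Psi_{P'}$, which is an isomorphism of $\mathcal{O}_X(\ast H)$-modules since $P'$ is perfect. Thus I only need to manufacture a right inverse of $j$ landing in $\mathcal{V}'_1$ and invoke the splitting lemma.

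First I would define the splitting
\begin{equation*}
\sigma := i_1 \circ \Psi_{P'}^{-1} : \mathcal{V}_2^{'\vee} \longrightarrow \mathcal{V}_1,
\end{equation*}
which satisfies $j \circ \sigma = \mathrm{id}_{\mathcal{V}_2^{'\vee}}$ by the factorization above. This shows $j$ is surjective and yields a short exact sequence
\begin{equation*}
0 \longrightarrow \mathcal{U}_1 \longrightarrow \mathcal{V}_1 \overset{j}{\longrightarrow} \mathcal{V}_2^{'\vee} \longrightarrow 0
\end{equation*}
split by $\sigma$. Hence $\mathcal{V}_1 = \mathrm{image}(\sigma) \oplus \mathcal{U}_1$, and since $\mathrm{image}(\sigma) = i_1(\mathcal{V}'_1) = \mathcal{V}'_1$, the claimed decomposition follows.

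To verify the two defining properties of a direct sum concretely: given any local section $v \in \mathcal{V}_1$, set $v' := \Psi_{P'}^{-1}(j(v)) \in \mathcal{V}'_1$ and $u := v - v'$; then $j(u) = j(v) - j(i_1(v')) = j(v) - \Psi_{P'}(v') = 0$, so $u \in \mathcal{U}_1$ and $v = v' + u$. Conversely, if $w \in \mathcal{V}'_1 \cap \mathcal{U}_1$, then $\Psi_{P'}(w) = j(i_1(w)) = 0$, which forces $w = 0$ since $\Psi_{P'}$ is an isomorphism.

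I do not expect a serious obstacle: the argument is essentially a formal splitting of a short exact sequence. The only substantive input is that perfectness of $P'$ in the filtered-bundle sense gives $\Psi_{P'}$ as an isomorphism of the underlying $\mathcal{O}_X(\ast H)$-modules, which is built into the definition. Note that the perfectness of $P$ itself is not logically needed beyond providing the pairing whose restriction is $P'$; only perfectness of $P'$ is used to split $j$.
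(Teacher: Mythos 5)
Your proof is correct and rests on the same idea as the paper's: perfectness of $P'$ makes $\Psi_{P'}=i_2^\vee\circ\Psi_P\circ i_1$ an isomorphism, which splits $\mathcal{V}_1$ as $\mathcal{V}'_1\oplus\ker(i_2^\vee\circ\Psi_P)=\mathcal{V}'_1\oplus\mathcal{U}_1$. Your explicit splitting map $\sigma=i_1\circ\Psi_{P'}^{-1}$ in fact spells out what the paper compresses into two exact sequences and ``the standard argument of sheaves,'' and your remark that only the perfectness of $P'$ (not of $P$) is logically needed here is accurate.
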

 \begin{proof}
 We have the following short exact sequence of sheaves:
 \begin{equation*}
 0\longrightarrow\mathcal{V}'_1\longrightarrow \mathcal{V}_1 \longrightarrow \mathcal{V}_1/\mathcal{V}'_1 \longrightarrow 0.
 \end{equation*}

Since $P$ and $P'$ are non-degenerate, we have another short exact sequence of sheaves:
 \begin{equation*}
 0\longrightarrow \mathcal{V}'_1\longrightarrow \mathcal{V}_1 \longrightarrow \mathcal{U}_1 \longrightarrow 0.
 \end{equation*}
By the standard argument of sheaves,  we have $\mathcal{U}_1\simeq  \mathcal{V}_1/\mathcal{V}'_1.$ 
Hence we have $\mathcal{V}_1=\mathcal{V}'_1\oplus \mathcal{U}_1.$
 \end{proof}
\subsubsection{Skew-symmetric pairings of filtered bundles}
Let $\omega$ be a skew-symmetric pairing of a filtered bundle $\mathcal{P}_*\mathcal{V}$ on $(X, H)$. 
Let $\mathcal{V}'\subset \mathcal{V}$ be a saturated locally free $\mathcal{O}_X(\ast H)$-submodule. Let $(\mathcal{V}')^{\perp\omega}$ be the kernel of the following composition:
\begin{equation*}
\mathcal{V}\overset{\Psi_{\omega}}\longrightarrow \mathcal{V}^\vee\overset{i^\vee}\longrightarrow \mathcal{V}^{'\vee}
\end{equation*}
where $i^\vee$ is the dual of the canonical inclusion. Let $\omega'$ be the induced skew-symmetric pairing of $\mathcal{P}_*\mathcal{V}'$. The next Lemma is the special case of Lemma \ref{decomp}.
\begin{lemma}
If $\omega$ and $\omega'$ are perfect, then we have the decomposition $\mathcal{V}=\mathcal{V}'\oplus(\mathcal{V}')^{\perp\omega}$.
\end{lemma}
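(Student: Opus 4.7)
The plan is simply to invoke the previous lemma. Specifically, I apply Lemma \ref{decomp} in the case $\mathcal{V}_1=\mathcal{V}_2=\mathcal{V}$, $\mathcal{V}'_1=\mathcal{V}'_2=\mathcal{V}'$, $P=\omega$, and $P'=\omega'$. Under this identification, the map $\Psi_\omega:\mathcal{V}\to\mathcal{V}^\vee$ plays the role of $\Psi_P$, and $(\mathcal{V}')^{\perp\omega}$ is by definition the kernel of $i^\vee\circ\Psi_\omega:\mathcal{V}\to\mathcal{V}^{'\vee}$, which is exactly the submodule $\mathcal{U}_1$ appearing in the statement of Lemma \ref{decomp}.

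I first check that the hypotheses of Lemma \ref{decomp} are satisfied. By assumption, both $\omega$ and $\omega'$ are perfect, so the pairings $P$ and $P'$ in the reduction above are perfect as well; this is the only input needed (the skew-symmetry of $\omega$ does not enter the argument, it only guarantees that the induced $\omega'$ on $\mathcal{P}_*\mathcal{V}'$ is again skew-symmetric and that the orthogonal is given by the same kernel formula as in the general two-module situation). Saturatedness of $\mathcal{V}'\subset\mathcal{V}$ is part of the hypothesis, so the quotient $\mathcal{V}/\mathcal{V}'$ is locally free and the setup of Lemma \ref{decomp} applies verbatim.

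Applying Lemma \ref{decomp} then yields the decomposition
\begin{equation*}
\mathcal{V}=\mathcal{V}'\oplus\ker(i^\vee\circ\Psi_\omega)=\mathcal{V}'\oplus(\mathcal{V}')^{\perp\omega},
\end{equation*}
which is the desired conclusion. There is no real obstacle; the only thing worth verifying carefully is that the kernel appearing in Lemma \ref{decomp} agrees with the definition of $(\mathcal{V}')^{\perp\omega}$ given just above the statement, which is immediate from unwinding the definitions of $\Psi_\omega$ and the dual of the inclusion $\mathcal{V}'\hookrightarrow\mathcal{V}$.
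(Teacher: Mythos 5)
Your proof is correct and coincides with the paper's own treatment: the paper simply notes that this lemma is the special case of Lemma \ref{decomp} with $\mathcal{V}_1=\mathcal{V}_2=\mathcal{V}$, $\mathcal{V}'_1=\mathcal{V}'_2=\mathcal{V}'$ and $P=\omega$, which is exactly the reduction you carry out. Your identification of $(\mathcal{V}')^{\perp\omega}$ with the kernel $\mathcal{U}_1$ from Lemma \ref{decomp} is the only point that needs checking, and you check it.
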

\subsection{Skew-symmetric pairings of good filtered Higgs bundle}
Throughout this section, we assume $X$ to be a smooth projective variety and let $H=\bigcup_{i\in \Lambda} H_i$ be a normal crossing divisor of it, and $L$ to be an ample line bundle on $X$. 
\subsubsection{Skew-symmetric pairings of Higgs bundle}
 \begin{definition}
 A skew-symmetric pairing $\omega$ on a good filtered Higgs bundle $(\mathcal{P}_*\mathcal{V},\theta)$ over $(X,H)$ is a skew-symmetric pairing $\omega$ of $\mathcal{P}_*\mathcal{V}$ such that $\omega(\theta\otimes \mathrm{Id})=-\omega(\mathrm{Id}\otimes \theta)$. 
  \end{definition}
 When $(\mathcal{P}_*\mathcal{V},\theta)$ has a skew-symmetric pairing $\omega$, we have an induced morphism $\Psi_\omega:(\mathcal{P}_*\mathcal{V}, \theta)\to(\mathcal{P}_*\mathcal{V}^\vee,-\theta^\vee)$ between good filtered Higgs bundles. We also obtain a symmetric pairing det$(\omega)$ of $(\mathrm{det}(\mathcal{P}_*\mathcal{V}),\mathrm{tr} \theta$).
\subsubsection{Harmonic bundles with skew-symmetric structure}\label{KH par}
We use the same notation as the last section. Let $(E,\overline\partial_E,\theta,h)$ be a good wild harmonic bundle on  $(X, H)$. Let $\omega$ be a symplectic structure of the harmonic bundle $(E,\overline\partial_E,\theta,h)$. By Proposition \ref{mor}, we obtain a  $\mu_L$-polystable good filtered Higgs bundle $(\mathcal{P}_*^hE, \theta)$ with vanishing Chern classes.
 \begin{lemma}
$\omega$ induces a perfect skew-symmetric pairing for the Higgs bundle $(\mathcal{P}^h_*E,\theta)$.
 \end{lemma}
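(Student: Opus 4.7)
The plan is to verify three things in order: (i) the open pairing $\omega$ extends to a morphism of filtered bundles $\mathcal{P}_*^h E \otimes \mathcal{P}_*^h E \to \mathcal{P}^{(0)}_*(\mathcal{O}_X(\ast H))$; (ii) the induced morphism $\Psi_\omega$ is an isomorphism of filtered bundles; (iii) the algebraic identities (skew-symmetry, compatibility with $\theta$) transport from $X\setminus H$ to the whole space.

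For (i), I would work locally in an admissible coordinate $(U_P,z_1,\dots,z_n)$ around $P\in H$. Given sections $s_j\in\mathcal{P}^h_{\textbf{\textit{a}}_j}E(U_P)$ with $\textbf{\textit{c}}_j:=\kappa_P(\textbf{\textit{a}}_j)$, by definition $|s_j|_h\leq C_\epsilon\prod_i|z_i|^{-c_{j,i}-\epsilon}$ for every $\epsilon>0$. The compatibility of $h$ with $\omega$ from Definition \ref{sym-p} says that $\Psi_\omega:(E,h)\to(E^\vee,h^\vee)$ is a pointwise isometry, hence by Cauchy-Schwarz applied to the duality pairing,
\[
|\omega(s_1,s_2)|=|\Psi_\omega(s_1)(s_2)|\leq|\Psi_\omega(s_1)|_{h^\vee}|s_2|_h=|s_1|_h|s_2|_h\leq C'_\epsilon\prod_i|z_i|^{-(c_{1,i}+c_{2,i})-2\epsilon}.
\]
Multiplying by $\prod_i z_i^{[c_{1,i}+c_{2,i}]}$ and using $c_{j,i}-[c_{j,i}]\in[0,1)$ together with Riemann's removable singularity theorem shows that $\omega(s_1,s_2)\in\mathcal{O}_X(\sum_i[c_{1,i}+c_{2,i}]H_i)(U_P)$, which is exactly a section of $\mathcal{P}^{(0)}_{\textbf{\textit{a}}_1+\textbf{\textit{a}}_2}(\mathcal{O}_X(\ast H))$ by the local description of that filtered bundle.

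For (ii), the same isometry property $|\Psi_\omega(s)|_{h^\vee}=|s|_h$ gives an isomorphism $\Psi_\omega:\mathcal{P}^h_{\textbf{\textit{a}}}E\xrightarrow{\sim}\mathcal{P}^{h^\vee}_{\textbf{\textit{a}}}E^\vee$ for every $\textbf{\textit{a}}$. The identification $\mathcal{P}^{h^\vee}_*E^\vee=(\mathcal{P}^h_*E)^\vee$ of filtered bundles then finishes the job. This identification is the main technical input and the place where I expect the most friction: it is known for good wild harmonic bundles through Mochizuki's analysis of acceptable metrics (the prolongation $\mathcal{P}^h_*E$ is locally free and compatible with duality, since the Chern connection has bounded curvature), and one can either quote this from \cite{M2} or give a direct check by running the argument in (i) also for $\omega^{-1}$ (which makes sense because $\omega$ is perfect on $X\setminus H$ and compatibility with $h$ passes to the inverse via Lemma \ref{m-s}). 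Either route produces a two-sided filtered inverse of $\Psi_\omega$.

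Finally, for (iii), the skew-symmetry $\omega\circ\mathrm{ex}=-\omega$ and the Higgs-compatibility $\omega(\theta\otimes\mathrm{Id})=-\omega(\mathrm{Id}\otimes\theta)$ both hold on $X\setminus H$ by definition of symplectic structure; since they are equalities between morphisms of filtered bundles and these morphisms are determined by their restrictions to the locally free sheaves on the dense open $X\setminus H$, they extend to equalities on all of $X$. Combining (i)--(iii) gives that $\omega$ is a perfect skew-symmetric pairing of the good filtered Higgs bundle $(\mathcal{P}^h_*E,\theta)$.
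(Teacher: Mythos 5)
Your proposal is correct and follows essentially the same route as the paper: the compatibility of $\omega$ with $h$ makes $\Psi_\omega$ an isometry, which yields the filtered isomorphism $\mathcal{P}^h_*E\xrightarrow{\sim}\mathcal{P}^{h^\vee}_*E^\vee$, and the natural identification $\mathcal{P}^{h^\vee}_*E^\vee\simeq(\mathcal{P}^h_*E)^\vee$ finishes the argument. The paper's proof is just these two sentences; your growth estimates in (i), the removable-singularity argument, and the density argument in (iii) are the details it leaves implicit.
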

\begin{proof}
Since $\omega$ is compatible with $h$, it induces an isomorphism $\Psi_\omega:\mathcal{P}^h_*E\to\mathcal{P}^{h^{\vee}}_*E^\vee$. Since $\mathcal{P}^{h^{\vee}}_*E^\vee$ is naturally isomorphic to $(\mathcal{P}^{h}_*E)^\vee$, $\omega$ induces a perfect pairing for $\mathcal{P}^h_*E$.
\end{proof}
As a consequence, we have the following.
 \begin{proposition}\label{w-f}
 Let $(E,\overline\partial_E,\theta, h)$ be a good wild harmonic bundle equipped with symplectic structure $\omega$. Then $(\mathcal{P}_*^hE, \theta)$ is a  $\mu_L$-polystable
good filtered Higgs bundle equipped with a perfect skew-symmetric pairing $\omega$ and satisfies the vanishing condition (\ref{vanish}).
 \end{proposition}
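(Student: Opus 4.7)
The strategy is to combine three ingredients: the analytic input from the non-compact Kobayashi-Hitchin correspondence, the prolongation lemma proved just above, and a compatibility check for the Higgs field.

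First, I would invoke Proposition \ref{mor} applied to the good wild harmonic bundle $(E,\overline\partial_E,\theta,h)$. This yields directly that $(\mathcal{P}_*^hE,\theta)$ is a $\mu_L$-polystable good filtered Higgs bundle with $\mu_L(\mathcal{P}_*^hE)=0$, $c_1(\mathcal{P}_*^hE)=0$, and $\int_X \mathrm{ch}_2(\mathcal{P}_*^hE)\,c_1(L)^{\dim X-2}=0$, i.e.\ the vanishing condition (\ref{vanish}).

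Next, to produce the perfect skew-symmetric pairing on $\mathcal{P}_*^hE$, I would appeal to the lemma immediately preceding this proposition: since $\omega$ is compatible with $h$ pointwise on $X\setminus H$, the isomorphism $\Psi_\omega:E\to E^\vee$ is an isometry with respect to $h$ and the dual metric $h^\vee$, so it intertwines the growth conditions defining the prolongations. Consequently it induces an isomorphism of filtered bundles $\mathcal{P}_*^hE\xrightarrow{\sim}\mathcal{P}_*^{h^\vee}E^\vee\simeq (\mathcal{P}_*^hE)^\vee$, that is, a perfect skew-symmetric pairing
\begin{equation*}
\omega:\mathcal{P}_*^hE\otimes\mathcal{P}_*^hE\to\mathcal{P}^{(0)}_*(\mathcal{O}_X(\ast H)).
\end{equation*}
Skew-symmetry is inherited from the skew-symmetry of $\omega$ on $X\setminus H$.

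Finally, I would verify the compatibility with the Higgs field, namely $\omega(\theta\otimes\mathrm{Id})=-\omega(\mathrm{Id}\otimes\theta)$ on $\mathcal{P}_*^hE$. By the definition of a symplectic structure on the harmonic bundle (Definition \ref{sym-p}), this identity already holds as a relation of morphisms on $E$ over $X\setminus H$. Both sides are morphisms of filtered sheaves into $\mathcal{P}^{(0)}_*(\mathcal{O}_X(\ast H))\otimes\Omega^1_X(\log H)$ (the filtered Hom sheaf is torsion-free), and two such morphisms agreeing on the dense open $X\setminus H$ must coincide globally. Combining the three steps gives all conclusions of the proposition. The only nontrivial point is the prolongation lemma above, which is the analytic heart of the argument; once that is granted, the remaining work is formal and the proof is just the assembly I have described.
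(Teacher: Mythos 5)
Your proposal is correct and follows essentially the same route as the paper: $\mu_L$-polystability and the vanishing condition come from Proposition \ref{mor}, and the perfect skew-symmetric pairing comes from the preceding lemma, whose content is exactly your observation that compatibility of $\omega$ with $h$ makes $\Psi_\omega$ an isometry and hence an isomorphism $\mathcal{P}^h_*E\to\mathcal{P}^{h^\vee}_*E^\vee\simeq(\mathcal{P}^h_*E)^\vee$ of prolongations. Your final step checking that the relation $\omega(\theta\otimes\mathrm{Id})=-\omega(\mathrm{Id}\otimes\theta)$ persists on the filtered objects is left implicit in the paper, but your density argument is the standard justification.
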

\subsection{Kobayashi-Hitchin correspondence with skew-symmetry}\label{KH part}
Throughout this section, we assume $X$ to be a smooth projective variety and let $H=\bigcup_{i\in \Lambda} H_i$ be a normal crossing divisor of it, and $L$ to be an ample line bundle on $X$. 
 \subsubsection{Basic polystable object (1)}
 Let $(\mathcal{P}_*\mathcal{V}, \theta)$ be a stable good filtered Higgs bundle of degree 0 such that $(\mathcal{P}_*\mathcal{V}, \theta)\simeq(\mathcal{P}_*\mathcal{V}^\vee, -\theta^\vee)$. Let $P$ be a pairing of a filtered bundle 
 \begin{equation*}
 P:\mathcal{P}_*\mathcal{V}\otimes \mathcal{P}_*\mathcal{V}\to \mathcal{P}^{(0)}_*(\mathcal{O}_X(\ast H))
 \end{equation*}
 such that it induces an isomorphism $\Psi_{P}:(\mathcal{P}_*\mathcal{V}, \theta)\to(\mathcal{P}_*\mathcal{V}^\vee, -\theta^\vee)$. If there is another pairing $P'$ which induces an isomorphism $\Psi_{P'}$, then since a stable bundle is simple there exists an $\alpha\in\mathbb{C}$ such that $P'=\alpha P$. 
  \begin{lemma}
 Either one of $P\circ \mathrm{ex}=P$ or $P\circ \mathrm{ex}=-P$ holds.
 \end{lemma}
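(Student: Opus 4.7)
The proof is quite short, essentially a direct exploitation of the uniqueness statement recorded immediately before the lemma. Here is how I would organize it.

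First I would show that $P\circ\mathrm{ex}$ is itself a pairing of the same type as $P$, namely one that induces an isomorphism $(\mathcal{P}_*\mathcal{V},\theta)\to(\mathcal{P}_*\mathcal{V}^\vee,-\theta^\vee)$ in the category of good filtered Higgs bundles. For the filtered-bundle isomorphism part, note that $\Psi_{P\circ\mathrm{ex}}=\Psi_P^\vee$ under the natural identification $\mathcal{P}_*\mathcal{V}\simeq\mathcal{P}_*\mathcal{V}^{\vee\vee}$, so this is an isomorphism because $\Psi_P$ is. For compatibility with the Higgs field, the identity $P(\theta u,v)+P(u,\theta v)=0$ (which is exactly the statement that $\Psi_P$ intertwines $\theta$ and $-\theta^\vee$) immediately yields $P\circ\mathrm{ex}(\theta u,v)+P\circ\mathrm{ex}(u,\theta v)=P(v,\theta u)+P(\theta v,u)=0$, so $\Psi_{P\circ\mathrm{ex}}$ also intertwines $\theta$ and $-\theta^\vee$.

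Next I would invoke the uniqueness statement that precedes the lemma: stability of $(\mathcal{P}_*\mathcal{V},\theta)$ makes it simple, so any two pairings inducing an isomorphism to $(\mathcal{P}_*\mathcal{V}^\vee,-\theta^\vee)$ differ by a nonzero scalar. Applied to $P$ and $P\circ\mathrm{ex}$ this gives some $\alpha\in\mathbb{C}^\times$ with
\begin{equation*}
P\circ\mathrm{ex}=\alpha P.
\end{equation*}

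Finally, I would apply $\mathrm{ex}$ once more. Since $\mathrm{ex}^2=\mathrm{Id}$,
\begin{equation*}
P=(P\circ\mathrm{ex})\circ\mathrm{ex}=\alpha\,(P\circ\mathrm{ex})=\alpha^2 P,
\end{equation*}
so $\alpha^2=1$ and hence $\alpha=\pm 1$, which is exactly the dichotomy $P\circ\mathrm{ex}=P$ or $P\circ\mathrm{ex}=-P$. There is no real obstacle here; the only point requiring any care is verifying that $P\circ\mathrm{ex}$ remains compatible with the Higgs field so that the uniqueness-up-to-scalar observation genuinely applies.
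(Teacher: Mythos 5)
Your proof is correct and follows essentially the same route as the paper: the paper also deduces from simplicity that $\Psi_{P\circ\mathrm{ex}}=\Psi_P^\vee=\alpha\Psi_P$ and then uses $(\Psi_P^\vee)^\vee=\Psi_P$ (your $\mathrm{ex}^2=\mathrm{Id}$) to conclude $\alpha^2=1$. Your added verification that $P\circ\mathrm{ex}$ still intertwines $\theta$ with $-\theta^\vee$ is a worthwhile detail the paper leaves implicit.
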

 \begin{proof} This was proved in \cite[Lemma 3.19]{qm1}. The claim follows from the fact that there exists a $\alpha\in\mathbb{C}$ such that $\Psi^\vee_P=\alpha\Psi_P$, $(\Psi^\vee_P)^\vee=\Psi_P$, $\Psi_{P\circ\mathrm{ex}}=\Psi^\vee_{P}$.
 \end{proof}
 Let $C_{\mathbb{C}^l}$ be a symmetric pairing of $\mathbb{C}^l$ defined by $C(\bm{x},\bm{y}):=\sum_ix_iy_i$ for $\bm{x}, \bm{y}\in\mathbb{C}^l$. Let $\omega_{\mathbb{C}^{2k}}$ be a skew-symmetric pairing of $\mathbb{C}^{2k}$ defined by $\omega_{\mathbb{C}^{2k}}(\bm{x},\bm{y}):$$=\sum_i(x_{2i-1}y_{2i}-x_{2i-1}y_{2i})$. If $P_1$ is a symmetric pairing then $P_1\otimes \omega_{\mathbb{C}^{2k}}$ is a skew-symmetric pairing for $(E,\theta)\otimes \mathbb{C}^{2k}.$  If $P_1$ is skew-symmetric then $P_1\otimes C_{{\mathbb{C}^l}}$ is a skew-symmetric pairing for $(\mathcal{P}_*\mathcal{V}, \theta)\otimes \mathbb{C}^l$.
 \begin{lemma}
 Suppose that $(\mathcal{P}_*\mathcal{V}, \theta)\otimes{\mathbb{C}^l}$ is equipped with a perfect skew-symmetric pairing $\omega$.
 \begin{itemize}\label{B}
  \item If $P_1$ is symmetric, then l is an even number 2k and there exists an automorphism $\tau$ for $\mathbb{C}^{2k}$ such that $(Id\otimes\tau)^*\omega=P_1\otimes\omega_{\mathbb{C}^{2k}}$.
  \item If $P_1$ is skew-symmetric then there exists an automorphism $\tau$ for $\mathbb{C}^{l}$ such that $(Id\otimes\tau)^*\omega=P_1\otimes C_{\mathbb{C}^l}$.
 \end{itemize}
 \end{lemma}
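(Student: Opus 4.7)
The plan is to reduce the classification of the skew-symmetric pairing $\omega$ on $(\mathcal{P}_*\mathcal{V},\theta)\otimes\mathbb{C}^l$ to a purely linear-algebraic classification of a non-degenerate bilinear form on $\mathbb{C}^l$, via a Schur-type argument. First I would translate $\omega$ into the morphism
\begin{equation*}
\Psi_\omega:(\mathcal{P}_*\mathcal{V},\theta)\otimes\mathbb{C}^l\longrightarrow(\mathcal{P}_*\mathcal{V}^\vee,-\theta^\vee)\otimes(\mathbb{C}^l)^\vee.
\end{equation*}
Since $(\mathcal{P}_*\mathcal{V},\theta)$ is stable of degree $0$ and is isomorphic to $(\mathcal{P}_*\mathcal{V}^\vee,-\theta^\vee)$, a Schur-type statement for stable filtered Higgs bundles (which follows from the canonical decomposition recalled earlier) gives
\begin{equation*}
\mathrm{Hom}\bigl((\mathcal{P}_*\mathcal{V},\theta),(\mathcal{P}_*\mathcal{V}^\vee,-\theta^\vee)\bigr)=\mathbb{C}\cdot\Psi_{P_1},
\end{equation*}
and consequently $\Psi_\omega=\Psi_{P_1}\otimes A$ for a unique $A\in(\mathbb{C}^l)^\vee\otimes(\mathbb{C}^l)^\vee$. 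Equivalently, $\omega=P_1\otimes A$, where we regard $A$ as a bilinear form on $\mathbb{C}^l$.

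Next I would determine the type of $A$. The skew-symmetry condition $\omega\circ\mathrm{ex}=-\omega$ factors through the tensor decomposition of the exchange map, so
\begin{equation*}
(P_1\circ\mathrm{ex})\otimes(A\circ\mathrm{ex})=-P_1\otimes A.
\end{equation*}
If $P_1$ is symmetric, this forces $A\circ\mathrm{ex}=-A$, i.e.\ $A$ is skew-symmetric on $\mathbb{C}^l$; if $P_1$ is skew-symmetric, then $A$ must be symmetric on $\mathbb{C}^l$. Because $\omega$ is perfect and $\Psi_{P_1}$ is an isomorphism, $A$ is non-degenerate in both cases.

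Finally I would invoke the classical normal form theorems. A non-degenerate skew-symmetric form on $\mathbb{C}^l$ forces $l=2k$ and is equivalent to $\omega_{\mathbb{C}^{2k}}$, while a non-degenerate symmetric form on $\mathbb{C}^l$ is equivalent to $C_{\mathbb{C}^l}$. In either case there exists $\tau\in\mathrm{GL}(\mathbb{C}^l)$ with $\tau^*A=\omega_{\mathbb{C}^{2k}}$ or $\tau^*A=C_{\mathbb{C}^l}$ respectively, giving
\begin{equation*}
(\mathrm{Id}\otimes\tau)^*\omega=P_1\otimes\tau^*A,
\end{equation*}
which is the asserted normal form. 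The only genuinely delicate step is the first one, the identification $\omega=P_1\otimes A$: one has to justify that $\mathrm{Hom}$ between $(\mathcal{P}_*\mathcal{V},\theta)\otimes\mathbb{C}^l$ and its skew dual is exactly $\mathbb{C}\Psi_{P_1}\otimes\mathrm{End}(\mathbb{C}^l)^\vee$ as filtered Higgs morphisms, i.e.\ that the filtrations on both sides are respected. This is where the stability and the assumption that $P_1$ itself is a morphism of filtered bundles are both used; once this is in hand, the rest reduces to finite-dimensional linear algebra on $\mathbb{C}^l$.
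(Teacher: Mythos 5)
Your argument is correct and is essentially the paper's own proof: the paper writes $\Psi_\omega$ componentwise as $\Psi_{\omega,ij}=\alpha_{ij}\Psi_{P_1}$ via the same Schur-type fact for stable filtered Higgs bundles, observes that $(\alpha_{ij})$ is a non-degenerate skew-symmetric (resp.\ symmetric) matrix, and concludes by the standard normal form; your bilinear form $A$ is exactly that matrix in basis-free language. No substantive difference.
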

 \begin{proof}
We only give the outline of the proof for the case when $P$ is symmetric. The other case can be proved similarly.\par
   Let $\{e_i\}^l_{i=1}$ be the canonical base of $\mathbb{C}^{l}$. Since $\omega$ is a perfect skew-symmetric pairing of $(\mathcal{P}_*\mathcal{V}, \theta)\otimes{\mathbb{C}^{l}}$, it induces an isomorphism $\Psi_\omega:(\mathcal{P}_*\mathcal{V}, \theta)\otimes\mathbb{C}^{l}\to(\mathcal{P}_*\mathcal{V}^\vee, -\theta^\vee)\otimes\mathbb{C}^{l}$.  Let $\Psi_{\omega,ij}$ be the composition of 
\begin{equation*}
(\mathcal{P}_*\mathcal{V}, \theta)\otimes e_i \overset{i}{\longrightarrow}(\mathcal{P}_*\mathcal{V}, \theta)\otimes\mathbb{C}^{l}\overset{\Psi_{\omega}}{\longrightarrow}(\mathcal{P}_*\mathcal{V}^\vee, -\theta^\vee)\otimes\mathbb{C}^{l}\overset{pr_j}{\longrightarrow}(\mathcal{P}_*\mathcal{V}^\vee, -\theta^\vee)\otimes e_j
\end{equation*}
where $i$ is the inclusion and $pr_j$ is the projection. Either one $\Psi_{\omega,ij}=0$ or $\Psi_{\omega,ij}=\alpha_{ij}\Psi_{P_1}$ for a $\alpha_{ij}\in\mathbb{C}$ holds. Since $\omega$ is a perfect pairing, $(\alpha_{ij})_{i,j}$ is non-degenerate matrix and since $\omega$ is skew-symmetric and $P$ is symmetric,  $(\alpha_{ij})_{i,j}$ is a skew-symmetric matrix. Hence $l$ is an even number $2k$  and there is an automorphism $\tau$ which we want.
 \end{proof}
 \begin{lemma}
 There is an unique harmonic metric $h_0$ on $\mathcal{V}|_{X/D}$ such that (1) it is adapted to $\mathcal{P}_*\mathcal{V}$ and (2) $\Psi_{P_1}$ is isometric with respect to $h_0$ and $h_0^\vee.$ 
 \end{lemma}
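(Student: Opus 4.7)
The plan is to build $h_0$ by invoking the Kobayashi--Hitchin correspondence (Theorem \ref{KH0}) to obtain \emph{some} adapted harmonic metric on $\mathcal{V}|_{X\setminus H}$, and then rescale it by a suitable positive constant so that $\Psi_{P_1}$ becomes isometric. The stability of $(\mathcal{P}_*\mathcal{V},\theta)$ together with the uniqueness clause of Proposition \ref{mor} is the engine that makes this work.

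More concretely, since $(\mathcal{P}_*\mathcal{V},\theta)$ is $\mu_L$-stable of degree $0$ and satisfies the vanishing condition (\ref{vanish}) (which follows from stability of degree $0$ together with the hypothesis $(\mathcal{P}_*\mathcal{V},\theta)\simeq(\mathcal{P}_*\mathcal{V}^\vee,-\theta^\vee)$ via $\mathrm{det}(P_1)$, or it is part of the running assumptions in Proposition \ref{B4}, \ref{B5}), Theorem \ref{KH0} yields a pluri-harmonic metric $h$ on $(E,\overline\partial_E,\theta):=(\mathcal{V},\theta)|_{X\setminus H}$ adapted to $\mathcal{P}_*\mathcal{V}$. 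The dual metric $h^{\vee}$ is then a pluri-harmonic metric on $(E^{\vee},-\theta^{\vee})$ adapted to $\mathcal{P}_*\mathcal{V}^{\vee}$. Pulling back along the isomorphism $\Psi_{P_1}:(\mathcal{P}_*\mathcal{V},\theta)\to(\mathcal{P}_*\mathcal{V}^{\vee},-\theta^{\vee})$ gives a second pluri-harmonic metric $\Psi_{P_1}^{*}h^{\vee}$ on $(E,\overline\partial_E,\theta)$ which is again adapted to $\mathcal{P}_*\mathcal{V}$.

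Next I apply the uniqueness statement of Proposition \ref{mor} (the third bullet). Because $(\mathcal{P}_*\mathcal{V},\theta)$ is stable, the canonical decomposition has a single summand, so $h$ and $\Psi_{P_1}^{*}h^{\vee}$ must differ by a single positive constant $c>0$:
\begin{equation*}
\Psi_{P_1}^{*}h^{\vee}=c\,h.
\end{equation*}
A direct computation shows that rescaling $h\mapsto a\,h$ sends $h^{\vee}$ to $a^{-1}h^{\vee}$ and hence $\Psi_{P_1}^{*}h^{\vee}\mapsto a^{-1}\Psi_{P_1}^{*}h^{\vee}=a^{-1}c\,h$. Therefore, taking $a=\sqrt{c}$, the rescaled metric $h_0:=\sqrt{c}\,h$ satisfies $\Psi_{P_1}^{*}h_0^{\vee}=h_0$, i.e.\ $\Psi_{P_1}:(\mathcal{V},h_0)\to(\mathcal{V}^{\vee},h_0^{\vee})$ is isometric. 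The adapted property is preserved under positive rescaling, so $h_0$ is still adapted to $\mathcal{P}_*\mathcal{V}$.

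For uniqueness, suppose $h_0'$ is another such harmonic metric. By Proposition \ref{mor} again, $h_0'=a\,h_0$ for some $a>0$. The isometry condition for $h_0'$ forces $\Psi_{P_1}^{*}(h_0')^{\vee}=h_0'$, i.e.\ $a^{-1}h_0=a h_0$, so $a=1$. The only place I anticipate needing extra care is in checking that the dual metric, the induced Higgs field $-\theta^{\vee}$, and the filtered prolongation $\mathcal{P}_*\mathcal{V}^{\vee}$ interact with $\Psi_{P_1}$ exactly as expected (so that $\Psi_{P_1}^{*}h^{\vee}$ really is a harmonic metric adapted to $\mathcal{P}_*\mathcal{V}$); this is a routine but notationally delicate verification that $\mathcal{P}_*^{h^{\vee}}E^{\vee}=(\mathcal{P}_*^{h}E)^{\vee}$ and that dualization commutes with prolongation, which is a standard fact for adapted metrics as recorded in \cite{M1, M2}.
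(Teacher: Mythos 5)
Your proposal is correct and takes essentially the same route as the paper: produce an adapted pluri-harmonic metric $h$ via the Kobayashi--Hitchin correspondence, note that $\Psi_{P_1}^*(h^\vee)$ is another adapted pluri-harmonic metric, invoke uniqueness up to a positive constant for stable objects to get $\Psi_{P_1}^*(h^\vee)=a^2h$, and rescale to $h_0=ah$. The paper's proof is exactly this computation (including the same uniqueness argument), so no further comment is needed.
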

 \begin{proof}
 By Theorem \ref{SBBM}, we have a harmonic metric $h$ on $\mathcal{V}|_{X/D}$ which is adapted to $\mathcal{P}_*\mathcal{V}$. Let $h^\vee$ be the induced harmonic metric of $\mathcal{V}^\vee|_{X\setminus D}$ by $h$, which is also adapted to $\mathcal{P}_*\mathcal{V}^\vee.$ Since $\Psi_{P}:(\mathcal{P}_*\mathcal{V}, \theta)\to(\mathcal{P}_*\mathcal{V}^\vee, -\theta^\vee)$ is an isomorphism, $\Psi_{P}^*(h^\vee)$ is also a harmonic metric which is adapted to $\mathcal{P}_*\mathcal{V}$. Since the adapted harmonic metric for a stable Higgs bundle is unique up to positive constant, we have an $a>0$ such that  $\Psi_{P}^*(h^\vee)=a^2h.$ Set $h_0:=ah$ then we obtain the desired metric. The uniqueness is clear.
  \end{proof}
     \begin{lemma}\label{B1}
 \hfill
  \begin{itemize}
\item  For any hermitian metric $h_{\mathbb{C}^l}$ of $\mathbb{C}^l$, $h_0\otimes h_{\mathbb{C}^l}$ is a harmonic metric  of $\mathcal{V}|_{X/D}\otimes\mathbb{C}^l$ which is adapted to $\mathcal{P}_*\mathcal{V}\otimes\mathbb{C}^l$. Conversely, for any harmonic metric $h$ on $\mathcal{V}|_{X/D}\otimes\mathbb{C}^l$ which is adapted to $\mathcal{P}_*\mathcal{V}\otimes\mathbb{C}^l$, there is a hermitian metric $h_{\mathbb{C}^l}$ of $\mathbb{C}^l$ such that $h=h_0\otimes h_{\mathbb{C}^l}$.
\item If $P_1$ is symmetric (resp. skew-symmetric), a harmonic metric $h_0\otimes h_{\mathbb{C}^l}$  of $\mathcal{V}|_{X/D}\otimes\mathbb{C}^l$ is compatible with $P_1\otimes \omega_{\mathbb{C}^l}$ (resp. $P_1\otimes C_{\mathbb{C}^{l}}$) if and only if  $h_{\mathbb{C}^l}$ is compatible with $\omega_{\mathbb{C}^l}$ (resp. $ C_{\mathbb{C}^{l}})$.
  \end{itemize}
  \end{lemma}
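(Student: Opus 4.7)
The plan is to handle the two bullets separately, reducing each to the combination of a direct tensor-product computation with the uniqueness statement for adapted harmonic metrics on a polystable filtered Higgs bundle (Proposition~\ref{mor}, third bullet).

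For the first bullet, the forward direction is essentially formal. If $h_0$ is adapted to $\mathcal{P}_\ast\mathcal{V}$ and $h_{\mathbb{C}^l}$ is a constant hermitian metric on $\mathbb{C}^l$, then the pointwise identity $|s\otimes t|_{h_0\otimes h_{\mathbb{C}^l}} = |s|_{h_0}\cdot |t|_{h_{\mathbb{C}^l}}$, together with the fact that $|t|_{h_{\mathbb{C}^l}}$ is a positive constant on each coordinate frame of $\mathbb{C}^l$, shows that the growth conditions defining $\mathcal{P}^{h_0\otimes h_{\mathbb{C}^l}}_{\textbf{\textit{a}}}(E\otimes \mathbb{C}^l)$ coincide with those defining $\mathcal{P}_{\textbf{\textit{a}}}\mathcal{V}\otimes \mathbb{C}^l$. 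Pluri-harmonicity is preserved since $\nabla_{h_0\otimes h_{\mathbb{C}^l}} = \nabla_{h_0}\otimes \mathrm{Id} + \mathrm{Id}\otimes d_{h_{\mathbb{C}^l}}$ and the second summand is flat.

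For the converse, let $h$ be an adapted harmonic metric on $\mathcal{V}\otimes \mathbb{C}^l$. Fix any reference $h_{\mathrm{std}}$ on $\mathbb{C}^l$, so that $h_0\otimes h_{\mathrm{std}}$ is adapted and harmonic by the forward direction. Define the positive self-adjoint endomorphism $s$ by $h(u,v) = (h_0\otimes h_{\mathrm{std}})(su,v)$. Applying Proposition~\ref{mor} to the polystable good filtered Higgs bundle $\mathcal{P}_\ast\mathcal{V}\otimes \mathbb{C}^l$ yields a decomposition into Higgs subbundles orthogonal with respect to both metrics and along which $h$ and $h_0\otimes h_{\mathrm{std}}$ differ by positive scalars; hence $s$ is a Higgs endomorphism of $(\mathcal{V}, \theta)\otimes \mathbb{C}^l$. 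Stability of $(\mathcal{P}_\ast\mathcal{V},\theta)$ and Schur's lemma give $\mathrm{End}((\mathcal{P}_\ast\mathcal{V},\theta)\otimes \mathbb{C}^l)\simeq \mathrm{End}(\mathbb{C}^l)$, so $s = \mathrm{Id}_{\mathcal{V}}\otimes A$ for a positive self-adjoint $A$. Setting $h_{\mathbb{C}^l}(x,y):=h_{\mathrm{std}}(Ax,y)$ gives $h = h_0\otimes h_{\mathbb{C}^l}$.

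For the second bullet, I use that both $\Psi$ of pairings and $\Psi$ of hermitian metrics factor through tensor products: $\Psi_{P_1\otimes \omega_{\mathbb{C}^l}} = \Psi_{P_1}\otimes \Psi_{\omega_{\mathbb{C}^l}}$, $\Psi_{h_0\otimes h_{\mathbb{C}^l}}=\Psi_{h_0}\otimes \Psi_{h_{\mathbb{C}^l}}$, and likewise for their duals, which follow from the definitions. Compatibility of $h_0\otimes h_{\mathbb{C}^l}$ with $P_1\otimes \omega_{\mathbb{C}^l}$ is the pointwise isometry identity
\begin{equation*}
(h_0\otimes h_{\mathbb{C}^l})^\vee\bigl(\Psi_{P_1\otimes\omega_{\mathbb{C}^l}}(u\otimes x),\, \Psi_{P_1\otimes\omega_{\mathbb{C}^l}}(v\otimes y)\bigr)
= (h_0\otimes h_{\mathbb{C}^l})(u\otimes x, v\otimes y),
\end{equation*}
which by the factorization separates into $h_0^\vee(\Psi_{P_1}u,\Psi_{P_1}v)\cdot h_{\mathbb{C}^l}^\vee(\Psi_{\omega_{\mathbb{C}^l}}x,\Psi_{\omega_{\mathbb{C}^l}}y) = h_0(u,v)\cdot h_{\mathbb{C}^l}(x,y)$. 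By the defining property of $h_0$, the first factor on the left equals $h_0(u,v)$, so the identity is equivalent to $h_{\mathbb{C}^l}^\vee(\Psi_{\omega_{\mathbb{C}^l}}x,\Psi_{\omega_{\mathbb{C}^l}}y)=h_{\mathbb{C}^l}(x,y)$, i.e.\ to compatibility of $h_{\mathbb{C}^l}$ with $\omega_{\mathbb{C}^l}$; non-degeneracy of $h_0$ allows us to read the equivalence in both directions. The skew-symmetric $P_1$ case is identical with $C_{\mathbb{C}^l}$ in place of $\omega_{\mathbb{C}^l}$. The main obstacle is the converse of the first bullet, and specifically the verification that the relative endomorphism $s$ commutes with the Higgs field, which is where Proposition~\ref{mor} is essential; once that is in place, everything else is multilinear algebra.
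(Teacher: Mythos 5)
Your proof is correct, and for the second bullet it is the same multilinear-algebra factorization $\Psi_{P_1\otimes\omega_{\mathbb{C}^l}}=\Psi_{P_1}\otimes\Psi_{\omega_{\mathbb{C}^l}}$, $(h_0\otimes h_{\mathbb{C}^l})^\vee=h_0^\vee\otimes h_{\mathbb{C}^l}^\vee$ that the paper uses. The difference is in the first bullet: the paper disposes of it in one line by citing the uniqueness of the adapted harmonic metric for a given parabolic structure (\cite[Corollary 13.6.2]{M2}), whereas you derive that uniqueness statement yourself from Proposition \ref{mor}, by comparing $h$ with the reference metric $h_0\otimes h_{\mathrm{std}}$, extracting the relative endomorphism $s$, and using the both-metric-orthogonal decomposition to see that $s$ commutes with the Higgs field before invoking Schur's lemma for the stable factor. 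This buys a self-contained argument at the cost of one detail you should make explicit: to apply $\mathrm{End}\bigl((\mathcal{P}_*\mathcal{V},\theta)\otimes\mathbb{C}^l\bigr)\simeq\mathrm{End}(\mathbb{C}^l)$ you need $s$ to be an endomorphism of the \emph{filtered} Higgs bundle, not merely of the Higgs bundle on $X\setminus H$; this follows because $s$ acts as the constant $a_j$ on each holomorphic summand $F_j$ of the decomposition, hence is holomorphic and bounded with respect to the adapted metrics, and therefore preserves each $\mathcal{P}_{\textbf{\textit{a}}}$. With that remark added, your argument is a complete proof of the corollary the paper cites, and the rest goes through as written.
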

  \begin{proof}
  The first claim follows from the uniqueness of the harmonic metric to a parabolic structure. See \cite[Corollary 13.6.2]{M2}.\par
  The second claim follows from the following argument: Let $E_i(i=1,2)$ be complex vector bundles and $h_i(i=1,2)$ be hermitian metrics for $E_i$. Let $P_i(i=1,2)$ be pairings for $E_i$ (i.e. $P_i$ is a section of $E^\vee_i\otimes E^\vee_i$) and $\Psi_{P_i}:E_i\to E_i^\vee$ be the indued morphisms. Let $h_1\otimes h_2$ be the hermitian metric of $E_1\otimes E_2$ induced by $h_i$ and $P_1\otimes P_2$ be the pairing of $E_1\otimes E_2$ induced by $P_i$. Let $u_i\otimes v_i(i=1,2)$ be  sections of $E_1\otimes E_2$. $h_i\otimes h_2$ and $P_1\otimes P_2$ are defined as $h_i\otimes h_2(u_1\otimes v_1,u_2\otimes v_2)=h_1(u_1,u_2)h_2(v_1,v_2)$ and $P_1\otimes P_2(u_1\otimes v_1,u_2\otimes v_2)=P_1(u_1,u_2)P_2(v_1,v_2)$. Hence $\Psi_{P_1\otimes P_2}=\Psi_{P_1}\otimes\Psi_{P_2}$ and $(h_1\otimes h_2)^\vee(\Psi_{P_1\otimes P_2}(u_1\otimes v_1),\Psi_{P_1\otimes P_2}(u_2\otimes v_2))=h_1^\vee(\Psi_{P_1}(u_1),\Psi_{P_1}(u_2))h_2^\vee(\Psi_{P_2}(v_1),\Psi_{P_2}(v_2))$ holds. Once we apply this discussion to $h_0\otimes h_{\mathbb{C}^l}$ and $P_1\otimes\omega_{\mathbb{C}^l}$ or $P_1\otimes C_{\mathbb{C}^{l}}$, the second claim follows.
  \end{proof}
  \subsubsection{Basic polystable objects (2)}
  Let $(\mathcal{P}_*\mathcal{V}, \theta)$ be a stable good filtered Higgs bundle that satisfies the vanishing condition (\ref{vanish}) and $(\mathcal{P}_*\mathcal{V}, \theta)$ $\centernot{\simeq}$ $(\mathcal{P}_*\mathcal{V}^\vee, -\theta^\vee)$. We set $\mathcal{P}_*\widetilde{\mathcal{V}}:=\mathcal{P}_*\mathcal{V}\oplus \mathcal{P}_*\mathcal{V}^\vee$ and set $\widetilde{\theta}:=\theta\oplus -\theta^\vee$. Then we obtain a Higgs bundle $(\mathcal{P}_*\widetilde{\mathcal{V}},\widetilde{\theta})$. We have a  naturally defined perfect skew-symmetric pairing of $(\mathcal{P}_*\widetilde{\mathcal{V}},\widetilde{\theta}),$
  \begin{equation*}
  \widetilde{\omega}_{(\mathcal{P}_*\mathcal{V}, \theta)}:(\mathcal{P}_*\widetilde{\mathcal{V}},\widetilde{\theta})\otimes(\mathcal{P}_*\widetilde{\mathcal{V}},\widetilde{\theta})\to\mathcal{P}^{(0)}_*(\mathcal{O}_X(\ast H))
  \end{equation*}
  such that $\widetilde{\omega}_{(\mathcal{P}_*\mathcal{V}, \theta)}((u_1,v^\vee_1),(u_2,v^\vee_2))=v^\vee_1(u_2)-v^\vee_2(u_1)$ for any local section $(u_1,v^\vee_1),(u_2,v^\vee_2)$ of $\mathcal{P}_*\widetilde{\mathcal{V}}$. $(\mathcal{P}_*\widetilde{\mathcal{V}},\widetilde{\theta}, \widetilde{\omega}_{(\mathcal{P}_*\mathcal{V}, \theta)})$ forms a Higgs bundle with a perfect skew-symmetric pairing.
  \begin{lemma}\label{B2}
  Suppose $\big((\mathcal{P}_*\mathcal{V}, \theta)\otimes\mathbb{C}^{l_1}\big)\oplus\big((\mathcal{P}_*\mathcal{V}^\vee, -\theta^\vee)\otimes\mathbb{C}^{l_2}\big)$ is equipped with a perfect skew-symmetric pairing $\omega.$ Then we have $l_1=l_2$ and there exists an isomorphism $(\mathcal{P}_*\widetilde{\mathcal{V}},\widetilde{\theta})\otimes\mathbb{C}^{l_1}\simeq(\mathcal{P}_*\mathcal{V}, \theta)\otimes\mathbb{C}^{l_1}\oplus(\mathcal{P}_*\mathcal{V}^\vee, -\theta^\vee)\otimes\mathbb{C}^{l_2}$ such that under the isomorphism, $\widetilde{\omega}_{(\mathcal{P}_*\mathcal{V}, \theta)}\otimes C_{\mathbb{C}^{l_1}}=\omega$ holds.
    \end{lemma}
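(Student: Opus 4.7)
The plan is to decompose $\omega$ with respect to the direct sum structure $W:=(\mathcal{P}_*\mathcal{V}\otimes\mathbb{C}^{l_1})\oplus(\mathcal{P}_*\mathcal{V}^\vee\otimes\mathbb{C}^{l_2})$, writing $\omega=\omega_{11}+\omega_{12}+\omega_{21}+\omega_{22}$ where $\omega_{ij}$ is the restriction of $\omega$ to the $(i,j)$-block. Each $\omega_{ij}$ is itself a morphism of filtered bundles that is compatible with the induced Higgs field $\widetilde{\theta}:=(\theta\otimes\mathrm{Id})\oplus(-\theta^\vee\otimes\mathrm{Id})$.

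The first step is to show that the diagonal components vanish. The induced map $\Psi_{\omega_{11}}$ is a morphism of filtered Higgs bundles $(\mathcal{P}_*\mathcal{V},\theta)\otimes\mathbb{C}^{l_1}\to(\mathcal{P}_*\mathcal{V}^\vee,-\theta^\vee)\otimes(\mathbb{C}^{l_1})^\vee$. Mochizuki's stability lemma (\cite[Lemma 3.10]{M1}) asserts that any nonzero morphism between stable filtered Higgs bundles of equal slope is an isomorphism, so the hypothesis $(\mathcal{P}_*\mathcal{V},\theta)\centernot{\simeq}(\mathcal{P}_*\mathcal{V}^\vee,-\theta^\vee)$ forces $\omega_{11}=0$, and the identical argument gives $\omega_{22}=0$. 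For the off-diagonal pieces, $\Psi_{\omega_{12}}$ is a Higgs endomorphism of $(\mathcal{P}_*\mathcal{V},\theta)$ tensored with a linear map $\mathbb{C}^{l_1}\to(\mathbb{C}^{l_2})^\vee$; stability gives $\mathrm{End}((\mathcal{P}_*\mathcal{V},\theta))=\mathbb{C}$, so one may write $\omega_{12}=P\otimes A$ where $P:\mathcal{V}\otimes\mathcal{V}^\vee\to\mathcal{O}_X(\ast H)$ is the canonical pairing and $A\in\mathrm{Hom}(\mathbb{C}^{l_1},(\mathbb{C}^{l_2})^\vee)$. Symmetrically $\omega_{21}=(P\circ\mathrm{ex})\otimes B$ for some $B\in\mathrm{Hom}(\mathbb{C}^{l_2},(\mathbb{C}^{l_1})^\vee)$.

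Perfectness of $\omega$ forces both $A$ and $B$ to be isomorphisms, giving $l_1=l_2=:l$, and skew-symmetry of $\omega$ translates to $B=-A^T$ under the standard matrix identification. To realize the isomorphism in the statement, I would choose $\tau_2:=I_l$ and $\tau_1:=-(A^T)^{-1}\in\mathrm{GL}(l,\mathbb{C})$ so that $\tau_1^T A\tau_2=-I_l$; the automorphism $(\mathrm{id}_{\mathcal{V}}\otimes\tau_1)\oplus(\mathrm{id}_{\mathcal{V}^\vee}\otimes\tau_2)$ of $W$ then transports $\omega$ to the pairing whose only nonvanishing components are $(u\otimes x,v^\vee\otimes y)\mapsto -v^\vee(u)\,C_{\mathbb{C}^l}(x,y)$ on the $(1,2)$-block and its skew-symmetric counterpart $(v^\vee\otimes x,u\otimes y)\mapsto v^\vee(u)\,C_{\mathbb{C}^l}(x,y)$ on the $(2,1)$-block. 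A direct expansion of $\widetilde{\omega}_{(\mathcal{P}_*\mathcal{V},\theta)}\otimes C_{\mathbb{C}^l}$ under the tautological identification $\widetilde{\mathcal{V}}\otimes\mathbb{C}^l\cong(\mathcal{V}\otimes\mathbb{C}^l)\oplus(\mathcal{V}^\vee\otimes\mathbb{C}^l)$ matches this formula exactly.

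The main technical point is the Hom-space structure in the filtered Higgs category: morphisms $(\mathcal{P}_*\mathcal{V},\theta)\otimes U\to(\mathcal{P}_*\mathcal{V},\theta)\otimes V$ must take the form $\mathrm{id}\otimes f$ for $f\in\mathrm{Hom}(U,V)$, and similarly for the variants involving $\mathcal{V}^\vee$. This is a clean consequence of $\mathrm{End}((\mathcal{P}_*\mathcal{V},\theta))=\mathbb{C}$ (itself a consequence of stability via Mochizuki's lemma), but one must respect the filtration throughout; once this structural fact is in hand, the rest is essentially elementary linear algebra on the multiplicity spaces.
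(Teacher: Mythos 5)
Your proof is correct. The key facts you invoke are exactly the ones the paper relies on: the diagonal blocks of $\Psi_\omega$ vanish because $(\mathcal{P}_*\mathcal{V},\theta)\centernot{\simeq}(\mathcal{P}_*\mathcal{V}^\vee,-\theta^\vee)$ and Mochizuki's stability lemma, and the off-diagonal blocks have the form $\mathrm{id}\otimes(\text{linear map})$ because stability forces $\mathrm{End}((\mathcal{P}_*\mathcal{V},\theta))=\mathbb{C}$. Where you differ is in the organization: the paper restricts $\omega$ to $(\mathcal{P}_*\mathcal{V}\otimes L_1)\oplus(\mathcal{P}_*\mathcal{V}^\vee\otimes L_2)$ for one-dimensional $L_1,L_2$ on which the pairing is nonzero, shows the restriction is a nonzero multiple of $\widetilde{\omega}_{(\mathcal{P}_*\mathcal{V},\theta)}$ and hence perfect, splits off its $\omega$-orthogonal complement, and concludes by induction on the multiplicity; you instead treat the whole multiplicity space at once, encoding the two off-diagonal blocks as matrices $A$ and $B=-A^{T}$ and normalizing $A$ by an automorphism of the multiplicity spaces. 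Your version trades the induction for a one-shot piece of linear algebra and makes the conclusion $l_1=l_2$ fall out immediately from perfectness (in the paper it is implicit in the induction terminating); both are equally rigorous. The one point worth making fully explicit in a written version is the identification $\mathrm{Hom}\big((\mathcal{P}_*\mathcal{V},\theta)\otimes U,(\mathcal{P}_*\mathcal{V},\theta)\otimes V\big)\simeq\mathrm{Hom}(U,V)$ as \emph{filtered} Higgs morphisms: this follows by reading off the matrix entries with respect to bases of $U$ and $V$, each entry being a filtered Higgs endomorphism of the stable object and hence a scalar, and you have correctly flagged this as the technical crux.
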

    \begin{proof}
    We have  one-dimensional subspaces $L_1\subset\mathbb{C}^{l_1}$ and $L_2\subset\mathbb{C}^{l_2}$ such that the restriction of $\omega$ to $\big((\mathcal{P}_*\mathcal{V}, \theta)\otimes L_1\big)\oplus\big((\mathcal{P}_*\mathcal{V}^\vee, -\theta^\vee)\otimes L_2\big)$ is not identically zero. We define $\Psi_{\omega,12}$ to be the composition of 
    \begin{align*}
   (\mathcal{P}_*\mathcal{V}, \theta)\otimes L_1 \overset{i}{\longrightarrow}\big((\mathcal{P}_*\mathcal{V}, \theta)&\otimes L_1\big)\oplus\big((\mathcal{P}_*\mathcal{V}^\vee, -\theta^\vee)\otimes L_2\big)
   \\ &\overset{\Psi_{\omega}}{\longrightarrow}\big((\mathcal{P}_*\mathcal{V}^\vee, -\theta^\vee)\otimes L_1^\vee\big)\oplus\big((\mathcal{P}_*\mathcal{V}, \theta)\otimes L_2^\vee\big)\overset{pr_2}{\longrightarrow}(\mathcal{P}_*\mathcal{V}, \theta)\otimes L_2^\vee
    \end{align*}
    where $i$ and $pr_2$ are the canonical inclusion and the canonical projection. We define $\Psi_{\omega,11}$,$\Psi_{\omega,21}$ and $\Psi_{\omega,22}$ in the same manner. Since $(\mathcal{P}_*\mathcal{V}, \theta)$ $\centernot{\simeq}$ $(\mathcal{P}_*\mathcal{V}^\vee, -\theta^\vee)$, we obtain $\Psi_{\omega,11}=0, \Psi_{\omega,22}=0$ and $\Psi_{\omega,12}=\alpha \textrm{Id}_{(\mathcal{P}_*\mathcal{V}, \theta)}, \Psi_{\omega,21}=\beta \textrm{Id}_{(\mathcal{P}_*\mathcal{V}, \theta)}$ for some $\alpha,\beta\in\mathbb{C}$. Since $\omega$ is a skew-symmetric pairing, we have $\beta=-\alpha$. Hence $\omega=\alpha \widetilde{\omega}_{(\mathcal{P}_*\mathcal{V}, \theta)}$. In particular the restriction of $\omega$ to $\big((\mathcal{P}_*\mathcal{V}, \theta)\otimes L_1\big)\oplus\big((\mathcal{P}_*\mathcal{V}^\vee, -\theta^\vee)\otimes L_2\big)$ induces a perfect skew-symmetric pairing on it. Hence we obtain an orthonormal decomposition with respect to $\omega$:
    \begin{equation*}
(\mathcal{P}_*\mathcal{V}\otimes\mathbb{C}^{l_1})\oplus\big(\mathcal{P}_*\mathcal{V}^\vee\otimes\mathbb{C}^{l_2}\big)\simeq\big(\mathcal{P}_*\mathcal{V}\otimes L_1\big)\oplus\big(\mathcal{P}_*\mathcal{V}^\vee\otimes L_2\big)\oplus\mathcal{P}_*\mathcal{V}'.
   \end{equation*}
   It is preserved by the Higgs field and the induced Higgs field to $\mathcal{P}_*\mathcal{V}'$ is isomorphic to $\big((\mathcal{P}_*\mathcal{V}, \theta)\otimes\mathbb{C}^{l_1-1}\big)\oplus\big((\mathcal{P}_*\mathcal{V}^\vee, -\theta^\vee)\otimes\mathbb{C}^{l_2-1}\big)$. We obtain the claim by induction.
                \end{proof}
By using $C_{\mathbb{C}^{l}}$, we can identify $\mathbb{C}^{l}$ and it's dual $(\mathbb{C}^{l})^\vee$. Then we can induce a perfect skew-symmetric pairing  $\widetilde{\omega}_{(\mathcal{P}_*\mathcal{V}, \theta)}\otimes C_{\mathbb{C}^{l}}$ on 
\begin{equation*}
(\mathcal{P}_*\widetilde{\mathcal{V}}, \widetilde{\theta})\otimes\mathbb{C}^{l}=\big((\mathcal{P}_*\mathcal{V}, \theta)\otimes\mathbb{C}^{l}\big)\oplus\big((\mathcal{P}_*\mathcal{V}^\vee, -\theta^\vee)\otimes(\mathbb{C}^{l})^\vee\big)
\end{equation*}          
by the canonical way.\par
 We obtain the induced harmonic metric $h^\vee_0$ on  $\mathcal{V}^\vee|_{X/D}$ which is adapted to $\mathcal{P}_*\mathcal{V}^\vee.$ 
\begin{lemma}\label{B3}
\hfill
\begin{itemize}
\item Let $h_{\mathbb{C}^l}$ be any hermitian metric on $\mathbb{C}^l$. Let $h_{\mathbb{C}^l}^\vee$ denote the induced hermitian metric on $(\mathbb{C}^{l})^\vee.$ Then, $(h_0\otimes h_{\mathbb{C}^l})\oplus (h_0^\vee\otimes h_{\mathbb{C}^l}^\vee )$ is a harmonic metric of $(\mathcal{P}_*\widetilde{\mathcal{V}}, \widetilde{\theta})\otimes\mathbb{C}^{l}$ such that it is compatible with $\widetilde{\omega}_{(\mathcal{P}_*\mathcal{V}, \theta)}\otimes C_{\mathbb{C}^{l}}.$
\item Conversely, let h be any harmonic metric of $(\mathcal{P}_*\widetilde{\mathcal{V}}, \widetilde{\theta})\otimes\mathbb{C}^{l}$ which is compatible with  $\widetilde{\omega}_{(\mathcal{P}_*\mathcal{V}, \theta)}\otimes C_{\mathbb{C}^{l}}$. Then there exists a hermitian metric $h_{\mathbb{C}^l}$ of   $\mathbb{C}^l$ such that $h=(h_0\otimes h_{\mathbb{C}^l})\oplus (h_0^\vee\otimes h_{\mathbb{C}^l}^\vee)$.
\end{itemize}   
\end{lemma}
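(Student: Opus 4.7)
The plan is to prove the two bullets separately, treating the first as a direct verification and the second by reducing to Lemma \ref{B1} via the uniqueness of adapted harmonic metrics plus the non-isomorphism hypothesis $(\mathcal{P}_*\mathcal{V},\theta)\not\simeq(\mathcal{P}_*\mathcal{V}^\vee,-\theta^\vee)$.

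For the first bullet I would argue as follows. By Lemma \ref{B1}, $h_0\otimes h_{\mathbb{C}^l}$ is a harmonic metric on $\mathcal{V}|_{X\setminus H}\otimes\mathbb{C}^l$ adapted to $\mathcal{P}_*\mathcal{V}\otimes\mathbb{C}^l$, and applying the same Lemma to the stable Higgs bundle $(\mathcal{P}_*\mathcal{V}^\vee,-\theta^\vee)$ (whose distinguished adapted harmonic metric is $h_0^\vee$, by construction of $h_0$) yields that $h_0^\vee\otimes h_{\mathbb{C}^l}^\vee$ is harmonic and adapted to $\mathcal{P}_*\mathcal{V}^\vee\otimes(\mathbb{C}^l)^\vee$. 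The direct sum of harmonic adapted metrics on a direct sum of filtered Higgs bundles is harmonic and adapted, giving the first half. For compatibility with $\widetilde{\omega}_{(\mathcal{P}_*\mathcal{V},\theta)}\otimes C_{\mathbb{C}^l}$, I would apply the tensor-product manipulation already used in Lemma \ref{B1}: the pairing $\widetilde{\omega}$ sends the two summands into one another (explicitly $\Psi_{\widetilde{\omega}}(u,v^\vee)=(v^\vee,-u)$ under $\widetilde{\mathcal{V}}^\vee\simeq\mathcal{V}^\vee\oplus\mathcal{V}$), $\Psi_{h_0}$ is an isometry $(\mathcal{V},h_0)\to(\mathcal{V}^\vee,h_0^\vee)$ by definition of $h_0^\vee$, and $\Psi_{h_{\mathbb{C}^l}}$ an isometry $(\mathbb{C}^l,h_{\mathbb{C}^l})\to((\mathbb{C}^l)^\vee,h_{\mathbb{C}^l}^\vee)$, so the composite map $\Psi_{\widetilde{\omega}\otimes C}$ is an isometry.

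For the converse, let $h$ be any adapted harmonic metric compatible with $\widetilde{\omega}\otimes C_{\mathbb{C}^l}$. The main step is to show $h$ decomposes along the canonical polystable decomposition $(\mathcal{P}_*\mathcal{V}\otimes\mathbb{C}^l)\oplus(\mathcal{P}_*\mathcal{V}^\vee\otimes(\mathbb{C}^l)^\vee)$. I would take as reference the metric $h_{\mathrm{ref}}:=(h_0\otimes h^{(0)})\oplus(h_0^\vee\otimes (h^{(0)})^\vee)$ for the standard hermitian metric $h^{(0)}$ on $\mathbb{C}^l$, which is harmonic and adapted by the first bullet. By Proposition \ref{mor}, there is a Higgs decomposition $(\widetilde{\mathcal{V}}\otimes\mathbb{C}^l,\widetilde\theta)=\bigoplus_i(E_i,\theta_i)$ orthogonal with respect to both $h$ and $h_{\mathrm{ref}}$ with $h|_{E_i}=a_ih_{\mathrm{ref}}|_{E_i}$. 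Because $(\mathcal{P}_*\mathcal{V},\theta)$ and $(\mathcal{P}_*\mathcal{V}^\vee,-\theta^\vee)$ are stable and non-isomorphic, each Higgs sub-bundle $E_i$ must split as $(\mathcal{V}\otimes W_i^{(1)})\oplus(\mathcal{V}^\vee\otimes W_i^{(2)})$ for subspaces $W_i^{(1)}\subset\mathbb{C}^l$, $W_i^{(2)}\subset(\mathbb{C}^l)^\vee$; taking the direct sum over $i$ and using that $h|_{E_i}$ acts as a positive scalar $a_i$ on each piece separately shows $h=h_1\oplus h_2$ with $h_1$ harmonic adapted on $\mathcal{V}\otimes\mathbb{C}^l$ and $h_2$ harmonic adapted on $\mathcal{V}^\vee\otimes(\mathbb{C}^l)^\vee$.

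Applying Lemma \ref{B1} to each summand yields $h_1=h_0\otimes k_1$ and $h_2=h_0^\vee\otimes k_2$ for hermitian metrics $k_1$ on $\mathbb{C}^l$ and $k_2$ on $(\mathbb{C}^l)^\vee$. Finally, compatibility of $h$ with $\widetilde{\omega}\otimes C_{\mathbb{C}^l}$ forces $k_2=k_1^\vee$: applying the criterion of Lemma \ref{m-s} to elements $u\otimes e$ and $v^\vee\otimes f^\vee$ and using that $\Psi_{h_0}$ identifies $h_0$ and $h_0^\vee$ reduces the compatibility equation to the requirement that $\Psi_{C}\circ\Psi_{k_1}=\Psi_{k_2^\vee}\circ\Psi_{C}$, which is exactly $k_2=k_1^\vee$. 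Setting $h_{\mathbb{C}^l}:=k_1$ completes the proof. The main obstacle is the decomposition step: it is the only place where both the non-isomorphism hypothesis and the (non-obvious) uniqueness statement of Proposition \ref{mor} enter, and care is needed to organize the resulting Higgs subbundles into the canonical decomposition of the polystable bundle.
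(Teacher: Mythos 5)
Your proof is correct, and for the second bullet it takes a genuinely different (more self-contained) route than the paper. For the first bullet you and the paper do the same thing: harmonicity and adaptedness of the direct sum are routine, and compatibility is checked by the same tensor-product isometry computation as in the second claim of Lemma \ref{B1} (note that here no condition on $h_{\mathbb{C}^l}$ is needed precisely because the second summand carries the \emph{dual} metric $h_0^\vee\otimes h_{\mathbb{C}^l}^\vee$, so that $\Psi_{\widetilde{\omega}\otimes C}$, which is block-anti-diagonal and essentially the canonical map $\mathcal{V}\otimes\mathbb{C}^l\to(\mathcal{V}^\vee\otimes(\mathbb{C}^l)^\vee)^\vee$ on the first block, is automatically an isometry). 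For the converse, the paper simply cites \cite[Lemma 3.25]{qm1}, whereas you reprove it: you use the uniqueness statement of Proposition \ref{mor} against the reference metric $(h_0\otimes h^{(0)})\oplus(h_0^\vee\otimes(h^{(0)})^\vee)$, then use stability and $(\mathcal{P}_*\mathcal{V},\theta)\not\simeq(\mathcal{P}_*\mathcal{V}^\vee,-\theta^\vee)$ to force each orthogonal summand $E_i$ to respect the two isotypic blocks, concluding $h=h_1\oplus h_2$; then Lemma \ref{B1} (really the underlying uniqueness statement \cite[Corollary 13.6.2]{M2}, since Lemma \ref{B1} is formally stated only for the self-dual basic objects of type (1) -- a cosmetic point, as no self-duality is used in its first claim) gives $h_1=h_0\otimes k_1$, $h_2=h_0^\vee\otimes k_2$, and compatibility pins down $k_2=k_1^\vee$. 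This argument is sound: the only delicate steps are the isotypic splitting of each $E_i$ (which follows from $\mathrm{Hom}(\mathcal{V},\mathcal{V}^\vee)=\mathrm{Hom}(\mathcal{V}^\vee,\mathcal{V})=0$ by stability plus non-isomorphism) and the fact that the induced block decomposition is $h$-orthogonal (which follows since $h|_{E_i}=a_ih_{\mathrm{ref}}|_{E_i}$ and $h_{\mathrm{ref}}$ is block-diagonal). What your version buys is a self-contained proof that makes explicit where the non-isomorphism hypothesis enters; what the paper's version buys is brevity by outsourcing to Li--Mochizuki.
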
    
\begin{proof}
The compatibility of $(h_0\otimes h_{\mathbb{C}^l})\oplus (h_0^\vee\otimes h_{\mathbb{C}^l}^\vee )$ with $\widetilde{\omega}_{(\mathcal{P}_*\mathcal{V}, \theta)}\otimes C_{\mathbb{C}^{l}}$ follows from the argument in the second claim of Lemma \ref{B1}. The second claim follows from \cite[Lemma 3.25]{qm1}.
\end{proof}
\subsubsection{Polystable objects} \label{KH par2} 
Let $(\mathcal{P}_*\mathcal{V}, \theta)$ be a polystable good filtered Higgs bundle of degree 0 on $X$ equipped with a perfect skew-symmetric pairing $\omega$.  Let
\begin{equation*}
(\mathcal{P}_*\mathcal{V}, \theta)=\sum_i(\mathcal{P}_*\mathcal{V}_i, \theta_i)\otimes \mathbb{C}^{n(i)}
\end{equation*}
be the canonical decomposition. Since the perfect skew-symmetric pairing $\omega$ induces an isomorophism $(\mathcal{P}_*\mathcal{V}, \theta)\simeq(\mathcal{P}_*\mathcal{V}^\vee, -\theta^\vee),$ each $(\mathcal{P}_*\mathcal{V}_i, \theta_i)\otimes \mathbb{C}^{n(i)}$ is a basic polystable object we observed above. Hence the next proposition is deduced from previous sections.
\begin{proposition}\label{B4}
There exist stable Higgs bundles $(\mathcal{P}_*\mathcal{V}_i^{(0)}, \theta_i^{(0)})$ $(i=1,\dots,p(0))$, $(\mathcal{P}_*\mathcal{V}_i^{(1)}, \theta_i^{(1)})$ $(i=1,\dots,p(1))$ and $(\mathcal{P}_*\mathcal{V}_i^{(2)}, \theta_i^{(2)})$ $(i=1,\dots,p(2))$ of degree 0 on $X$ such that the following holds.
\begin{itemize}
\item $(\mathcal{P}_*\mathcal{V}_i^{(0)}, \theta_i^{(0)})$ is equipped with a symmetric pairing $P_i^{(0)}$.
\item  $(\mathcal{P}_*\mathcal{V}_i^{(1)}, \theta_i^{(1)})$ is equipped with a skew-symmetric pairing $P_i^{(1)}$.
\item  $(\mathcal{P}_*\mathcal{V}_i^{(2)}, \theta_i^{(2)})$ $\centernot{\simeq}$ $(\mathcal{P}_*\mathcal{V}_i^{(2)}, -\theta_i^{(2)})^\vee$.
\item There exists positive integers $l(a,i)$ and an isomorphism 
\begin{align*}
(\mathcal{P}_*\mathcal{V}, \theta)\simeq\bigoplus_{i=1}^{p(0)}(\mathcal{P}_*\mathcal{V}_i^{(0)}, \theta_i^{(0)})\otimes\mathbb{C}^{2l(0,i)}\oplus&\bigoplus_{i=1}^{p(1)}(\mathcal{P}_*\mathcal{V}_i^{(1)}, \theta_i^{(1)})\otimes\mathbb{C}^{l(1,i)}\\
&\oplus \bigoplus_{i=1}^{p(2)}\bigg(\big((\mathcal{P}_*\mathcal{V}_i^{(2)}, \theta_i^{(2)})\otimes\mathbb{C}^{l(2,i)}\big)\oplus\big((\mathcal{P}_*\mathcal{V}_i^{(2)}, -\theta_i^{(2)})^\vee\otimes(\mathbb{C}^{l(2,i)})^\vee\big)\bigg).
\end{align*}
Under this isomorphism, $\omega$ is identified with the direct sum of $P_{i}^{(0)}\otimes \omega_{\mathbb{C}^{2l(0,i)}}$, $P_{i}^{(1)}\otimes C_{\mathbb{C}^{l(1,i)}}$ and $\widetilde{\omega}_{(E_i^{(2)}, \theta_i^{(2)})}\otimes C_{\mathbb{C}^{l(2,i)}}$
\item $(\mathcal{P}_*\mathcal{V}_i^{(a)},\theta_i^{(a)})$ $\centernot{\simeq}$ $(\mathcal{P}_*\mathcal{V}_j^{(a)}, \theta_j^{(a)})$ $(i\neq j)$ for a=0,1,2, and $(\mathcal{P}_*\mathcal{V}_i^{(2)}, \theta_i^{(2)})$ $\centernot{\simeq}$ $(\mathcal{P}_*\mathcal{V}_j^{(2)}, -\theta_j^{(2)})^\vee$ for any $i, j.$
\end{itemize}
\end{proposition}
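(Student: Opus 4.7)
The plan is to bootstrap from the canonical decomposition and then use the isomorphism $\Psi_\omega$ together with stability to package the summands into the three stated types. Start by writing the canonical decomposition
\[
(\mathcal{P}_*\mathcal{V},\theta) \;\simeq\; \bigoplus_{i\in I}(\mathcal{P}_*\mathcal{V}_i,\theta_i)\otimes \mathbb{C}^{n(i)},
\]
with the $(\mathcal{P}_*\mathcal{V}_i,\theta_i)$ stable, mutually non-isomorphic, and of degree $0$. Because $\omega$ is perfect, $\Psi_\omega$ is an isomorphism from $(\mathcal{P}_*\mathcal{V},\theta)$ to $(\mathcal{P}_*\mathcal{V}^\vee,-\theta^\vee)$. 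By uniqueness of the canonical decomposition applied to the right-hand side, the assignment $i\mapsto i^*$ defined by $(\mathcal{P}_*\mathcal{V}_{i^*},\theta_{i^*})\simeq(\mathcal{P}_*\mathcal{V}_i^\vee,-\theta_i^\vee)$ is a well-defined involution on $I$, and multiplicities match: $n(i)=n(i^*)$.

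Next I would exploit the Hom-vanishing coming from stability (the Proposition following \cite[Lemma 3.10]{M1}) to show that $\omega$ is ``block diagonal'' relative to the involution. For $i\neq j^*$ any morphism $(\mathcal{P}_*\mathcal{V}_i,\theta_i)\to(\mathcal{P}_*\mathcal{V}_j^\vee,-\theta_j^\vee)$ must vanish, hence the restriction of $\Psi_\omega$ to a summand $(\mathcal{P}_*\mathcal{V}_i,\theta_i)\otimes\mathbb{C}^{n(i)}$ lands, up to the auxiliary $\mathbb{C}^{n(\cdot)}$ factors, only in the piece indexed by $i^*$. This splits $\omega$ into independent blocks indexed by the orbits of the involution $i\mapsto i^*$.

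Now I handle each type of orbit separately. For a fixed point $i=i^*$, the block is of the form $(\mathcal{P}_*\mathcal{V}_i,\theta_i)\otimes\mathbb{C}^{n(i)}$ with a perfect skew-symmetric pairing, and the stable factor is self-dual; the preliminary lemma (the one just before Lemma \ref{B}) forces the canonical pairing $P_i$ to be either symmetric or skew-symmetric, so $i$ is sorted into $I^{(0)}$ or $I^{(1)}$. In either case, Lemma \ref{B} produces an automorphism of $\mathbb{C}^{n(i)}$ identifying the block with $P_i\otimes\omega_{\mathbb{C}^{2l(0,i)}}$ (symmetric case, $n(i)=2l(0,i)$) or $P_i\otimes C_{\mathbb{C}^{l(1,i)}}$ (skew-symmetric case, $n(i)=l(1,i)$). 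For a two-element orbit $\{i,i^*\}$ with $i\neq i^*$, the block is $\bigl((\mathcal{P}_*\mathcal{V}_i,\theta_i)\otimes\mathbb{C}^{n(i)}\bigr)\oplus\bigl((\mathcal{P}_*\mathcal{V}_i^\vee,-\theta_i^\vee)\otimes\mathbb{C}^{n(i^*)}\bigr)$ with a perfect skew-symmetric pairing, and $(\mathcal{P}_*\mathcal{V}_i,\theta_i)\not\simeq(\mathcal{P}_*\mathcal{V}_i^\vee,-\theta_i^\vee)$ by hypothesis, so Lemma \ref{B2} applies and yields the desired form $\widetilde{\omega}_{(\mathcal{P}_*\mathcal{V}_i^{(2)},\theta_i^{(2)})}\otimes C_{\mathbb{C}^{l(2,i)}}$ with $l(2,i)=n(i)=n(i^*)$. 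Choosing one representative per orbit gives the indexing sets $p(0),p(1),p(2)$ and the pairwise non-isomorphism conditions in the last bullet follow from the corresponding statements inside each canonical decomposition.

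The routine bookkeeping is the passage from the canonical decomposition on $\mathcal{V}$ to the one on $\mathcal{V}^\vee$ and the matching of indices; the only genuinely subtle step is the block-diagonality of $\omega$, which I would treat as the main obstacle. Concretely, $\omega$ corresponds to a morphism of filtered Higgs bundles $\Psi_\omega$, and I must argue that when decomposed as a matrix of morphisms between the isotypic components $(\mathcal{P}_*\mathcal{V}_i,\theta_i)\otimes\mathbb{C}^{n(i)}$ and $(\mathcal{P}_*\mathcal{V}_j^\vee,-\theta_j^\vee)\otimes(\mathbb{C}^{n(j)})^\vee$, every off-orbit entry vanishes. This uses that $\mathrm{Hom}$ between non-isomorphic stable Higgs bundles of the same slope is zero, and that the non-vanishing diagonal entries are scalar multiples of identity morphisms on each simple factor. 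Once block-diagonality is in hand, the rest is just a direct application of Lemmas \ref{B} and \ref{B2} to each block.
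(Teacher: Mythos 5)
Your proposal is correct and follows essentially the same route as the paper: the paper's proof also starts from the canonical decomposition, observes that $\Psi_\omega$ identifies $(\mathcal{P}_*\mathcal{V},\theta)$ with $(\mathcal{P}_*\mathcal{V}^\vee,-\theta^\vee)$ so that each isotypic block is one of the "basic polystable objects," and then invokes Lemma \ref{B} and Lemma \ref{B2}. The only difference is that you make explicit the duality involution on the index set and the Hom-vanishing argument for block-diagonality of $\omega$, which the paper leaves implicit in the paragraph preceding the proposition.
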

\begin{proof}
It follows from Lemma \ref{B} and Lemma \ref{B2}.
\end{proof} 
Let $h_i^{(a)}$ $(a=0,1)$ be the unique harmonic metrics of $(\mathcal{V}_i^{(a)},\theta_i^{(a)})|_{X\setminus D}$ such that $\mathrm{(i)}$ $h_i^{(a)}$ is adapted to $(\mathcal{P}_*\mathcal{V}_i^{(a)},\theta_i^{(a)})$, $\textrm{(ii)}$ $\Psi_{P_i^{(a)}}$ is isomoteric with respect to $h_i^{(a)}$ and $(h_i^{(a)})^\vee$. Let $h_{i}^{(2)}$ be any harmonic metrics of $(\mathcal{V}_i^{(2)}, \theta_i^{(2)})|_{X\setminus D}$ which is adapted to $(\mathcal{P}_*\mathcal{V}_i^{(2)}, \theta_i^{(2)})$.
\begin{proposition}\label{B5}
There exists a harmonic metric h of  $(\mathcal{V}, \theta)|_{X\setminus D}$ such that $\mathrm{(i)}$ h is adapted to $\mathcal{P}_*\mathcal{V}$,  $\mathrm{(ii)}$ it is compatible with $\omega.$ Moreover, we have the following.
 \begin{itemize}
 \item Let $h_{\mathbb{C}^{2l(0,i)}}$ be a hermitian metric of $\mathbb{C}^{2l(0,i)}$ compatible with $\omega_{\mathbb{C}^{2l(0,i)}}$. Let $h_{\mathbb{C}^{l(1,i)}}$ be a hermitian metric of $\mathbb{C}^{l(1,i)}$ compatible with $C_{\mathbb{C}^{l(1,i)}}$. Let $h_{\mathbb{C}^{l(2,i)}}$ be any hermitian metric on $\mathbb{C}^{l(2,i)}$. Then,
 \begin{equation}\label{shape cpt}
 \bigoplus_{i=1}^{p(0)}h_i^{(0)}\otimes h_{\mathbb{C}^{2l(0,i)}}\oplus\bigoplus_{i=1}^{p(1)}h_i^{(1)}\otimes h_{\mathbb{C}^{l(1,i)}}\oplus \bigoplus_{i=1}^{p(2)}\bigg(\big((h_{i}^{(2)}\otimes h_{\mathbb{C}^{l(2,i)}})\oplus\big( (h_{i}^{(2)})^\vee\otimes (h_{\mathbb{C}^{l(2,i)}})^\vee \big)\bigg)
   \end{equation} 
   is a harmonic metric which satisfies the condition $\textrm{(i)}, \textrm{(ii)}$.
   \item Conversely, if h is a harmonic metric of $(\mathcal{V}, \theta)|_{X\setminus D}$ which satisfies the condition $\textrm{(i)}$ and $\textrm{(ii)}$, then it has the form of (\ref{shape cpt}).
   \end{itemize}
\end{proposition}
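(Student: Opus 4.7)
The plan is to reduce the proposition to the basic polystable cases handled by Lemmas \ref{B1} and \ref{B3}, using Proposition \ref{B4} to decompose $(\mathcal{P}_*\mathcal{V},\theta,\omega)$ into its building blocks, and Proposition \ref{mor} to control any harmonic metric adapted to $\mathcal{P}_*\mathcal{V}$.

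For the forward (existence) direction, I would fix the decomposition
\begin{equation*}
(\mathcal{P}_*\mathcal{V},\theta)\simeq\bigoplus_i(\mathcal{P}_*\mathcal{V}_i^{(0)},\theta_i^{(0)})\otimes\mathbb{C}^{2l(0,i)}\oplus\bigoplus_i(\mathcal{P}_*\mathcal{V}_i^{(1)},\theta_i^{(1)})\otimes\mathbb{C}^{l(1,i)}\oplus\bigoplus_i\bigl(\cdots\bigr)
\end{equation*}
provided by Proposition \ref{B4} and then construct the candidate metric summand by summand. On the type $(0)$ and type $(1)$ summands, Lemma \ref{B1} guarantees that $h_i^{(a)}\otimes h_{\mathbb{C}^{l(a,i)}}$ is a harmonic metric adapted to $\mathcal{P}_*\mathcal{V}_i^{(a)}\otimes\mathbb{C}^{l(a,i)}$, and that compatibility with $P_i^{(a)}\otimes\omega_{\mathbb{C}^{2l(0,i)}}$ (resp.\ $P_i^{(a)}\otimes C_{\mathbb{C}^{l(1,i)}}$) reduces to compatibility of $h_{\mathbb{C}^{l(a,i)}}$ with $\omega_{\mathbb{C}^{2l(0,i)}}$ (resp.\ $C_{\mathbb{C}^{l(1,i)}}$), which holds by our choice. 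On the type $(2)$ summands, Lemma \ref{B3} gives both that $(h_i^{(2)}\otimes h_{\mathbb{C}^{l(2,i)}})\oplus ((h_i^{(2)})^\vee\otimes (h_{\mathbb{C}^{l(2,i)}})^\vee)$ is a harmonic metric adapted to the corresponding filtered bundle, and that it is compatible with $\widetilde{\omega}_{(\mathcal{P}_*\mathcal{V}_i^{(2)},\theta_i^{(2)})}\otimes C_{\mathbb{C}^{l(2,i)}}$. Adaptedness and compatibility both pass to orthogonal direct sums, so the displayed formula (\ref{shape cpt}) defines a harmonic metric satisfying (i) and (ii).

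For the converse, suppose $h$ is a harmonic metric on $(\mathcal{V},\theta)|_{X\setminus D}$ adapted to $\mathcal{P}_*\mathcal{V}$ and compatible with $\omega$. The key step is to show $h$ respects the decomposition of Proposition \ref{B4}. By Proposition \ref{mor}, if $h'$ denotes the reference harmonic metric built in the forward direction, there is a decomposition of $(E,\theta)$ that is orthogonal with respect to both $h$ and $h'$, and on each piece $h$ and $h'$ differ by a positive scalar. Using the Hom–sheaf characterization of morphisms between stable components together with Proposition \ref{mor}, this orthogonal decomposition is a refinement of the canonical isotypic decomposition, so $h$ is block-diagonal with respect to the canonical decomposition of the polystable filtered Higgs bundle. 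On each isotypic block $(\mathcal{P}_*\mathcal{V}_i^{(a)},\theta_i^{(a)})\otimes\mathbb{C}^{m}$, the second halves of Lemmas \ref{B1} and \ref{B3} tell us that $h$ has the shape $h_i^{(a)}\otimes h_{\mathbb{C}^{m}}$ (type $0,1$) or $(h_i^{(2)}\otimes h_{\mathbb{C}^m})\oplus((h_i^{(2)})^\vee\otimes (h_{\mathbb{C}^m})^\vee)$ (type $2$). Finally, the compatibility of the global $h$ with $\omega$, combined with the identification of $\omega$ as a direct sum of $P_i^{(a)}$-pairings (Proposition \ref{B4}), forces $h_{\mathbb{C}^{l(a,i)}}$ to be compatible with $\omega_{\mathbb{C}^{2l(0,i)}}$ or $C_{\mathbb{C}^{l(1,i)}}$ in types $0,1$, while in type $2$ there is no additional constraint (Lemma \ref{B3}).

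The main obstacle is the converse's first step: showing that any $h$ satisfying (i) and (ii) is orthogonal with respect to the canonical isotypic decomposition, and moreover splits the type $(2)$ blocks into the $(\mathcal{P}_*\mathcal{V}_i^{(2)},\theta_i^{(2)})\otimes\mathbb{C}^{l(2,i)}$ and its dual-twisted partner. The orthogonality between non-isomorphic isotypic components follows from $h$-self-adjointness of the identity endomorphism on each component together with the Hom-vanishing between non-isomorphic stable Higgs bundles; the splitting of type $(2)$ blocks then uses the compatibility with $\omega$, since $\Psi_\omega$ identifies these two summands via the non-isomorphism $(\mathcal{P}_*\mathcal{V}_i^{(2)},\theta_i^{(2)})\not\simeq(\mathcal{P}_*\mathcal{V}_i^{(2)},-\theta_i^{(2)})^\vee$. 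Once the block diagonal structure is established, everything reduces to the already-proven statements of Lemmas \ref{B1} and \ref{B3}.
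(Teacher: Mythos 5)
Your proposal is correct and takes essentially the same route as the paper: existence by applying Lemmas \ref{B1} and \ref{B3} summand-by-summand to the decomposition of Proposition \ref{B4}, and the converse by reducing to the second halves of those lemmas. The paper's own proof is just a citation of Proposition \ref{B4} and Lemmas \ref{B1} and \ref{B3}; the block-diagonality step you spell out for the converse (via Proposition \ref{mor} and Hom-vanishing between non-isomorphic stable factors) is precisely the content left implicit there.
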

\begin{proof}
The first claim follows from Proposition \ref{B4}. The second claim follows from Lemma \ref{B1} and Lemma \ref{B3}.
\end{proof}
\subsubsection{An equivalence}
In this section, we state the Kobayashi-Hitchin correspondence with skew symmetry.
Let $(E,\overline\partial_E,\theta,h)$ be a good wild harmonic bundle with symplectic structure $\omega$. From section \ref{KH par}, we obtain a good filtered Higgs bundle $(\mathcal{P}_*^hE, \theta)$ satisfying the vanishing condition (\ref{vanish}) equipped with a perfect skew-symmetric pairing. From section \ref{KH par2}, we also have the converse. As a result, we have the following.
\begin{theorem}\label{KH}
Let $X$ be a smooth projective variety and $H$ be a normal crossing divisor of X.\par
 The following objects are equivalent on $(X,H)$
    \begin{itemize}
  \item  Good wild harmonic bundles with a symplectic structure.
  \item  Good filtered polystable Higgs bundles with a perfect skew-symmetric pairing satisfying the vanishing condition (\ref{vanish}).
      \end{itemize}
  \end{theorem}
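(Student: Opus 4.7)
The plan is to assemble the theorem directly from the results already established in the preceding subsections, so essentially no new argument is needed beyond combining Proposition \ref{w-f} with Propositions \ref{B4} and \ref{B5}. I would organize the proof as two independent constructions, each followed by a short verification that the two passages are mutually inverse up to the natural notion of equivalence (isomorphism of harmonic bundles on $X - H$ respecting $\omega$, and isomorphism of filtered Higgs bundles respecting the pairing).

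For the forward direction, I start with a good wild harmonic bundle $(E,\overline\partial_E,\theta,h)$ on $(X,H)$ equipped with a symplectic structure $\omega$. Proposition \ref{w-f} directly produces a $\mu_L$-polystable good filtered Higgs bundle $(\mathcal{P}^h_*E,\theta)$ equipped with a perfect skew-symmetric pairing (still denoted $\omega$) and satisfying the vanishing condition (\ref{vanish}). So this direction is immediate.

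For the converse, I start with a $\mu_L$-polystable good filtered Higgs bundle $(\mathcal{P}_*\mathcal{V},\theta)$ on $(X,H)$ carrying a perfect skew-symmetric pairing $\omega$ and satisfying (\ref{vanish}). Applying Proposition \ref{B4}, I obtain the decomposition of $(\mathcal{P}_*\mathcal{V},\theta,\omega)$ into the three families of basic polystable pieces, and Proposition \ref{B5} then furnishes a harmonic metric $h$ on $(\mathcal{V},\theta)|_{X-H}$ which is adapted to $\mathcal{P}_*\mathcal{V}$ and compatible with $\omega$. The restriction $(E,\overline\partial_E,\theta):=(\mathcal{V},\theta)|_{X-H}$ inherits goodness of the Higgs field from the goodness of the filtered Higgs bundle, and the compatibility statement of Proposition \ref{B5} means exactly that $\omega|_{X-H}$ is a symplectic structure for the harmonic bundle $(E,\overline\partial_E,\theta,h)$ in the sense of Definition \ref{sym-p}. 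Thus $(E,\overline\partial_E,\theta,h,\omega|_{X-H})$ is a good wild harmonic bundle with symplectic structure.

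To close the loop, I verify that the two constructions are mutually inverse. Starting from a harmonic bundle with symplectic structure, passing to the filtered object and then back produces a harmonic metric which, by the uniqueness part of Proposition \ref{B5} applied to each basic polystable factor together with the uniqueness clause of Proposition \ref{mor}, agrees with the original metric up to the standard ambiguity (positive scalars on stable summands and unitary rescalings on the multiplicity spaces) that is also built into the notion of equivalence; the same holds for $\omega$ since $\omega|_{X-H}$ determines $\omega$ on $\mathcal{P}^h_*E$ by prolongation. In the other direction, starting from the filtered object, the harmonic metric produced by Proposition \ref{B5} is adapted, so $\mathcal{P}^h_*E=\mathcal{P}_*\mathcal{V}$, and the pairing recovered from compatibility is the given one. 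The only mildly subtle point I expect is bookkeeping the ambiguity of the adapted harmonic metric on the multiplicity factors $\mathbb{C}^{l(a,i)}$; this is precisely the content of Lemma \ref{B1} and Lemma \ref{B3}, so it reduces to citing those results rather than new work.
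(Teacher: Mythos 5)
Your proposal is correct and takes essentially the same route as the paper: the forward direction is exactly Proposition \ref{w-f}, and the converse is exactly the combination of Propositions \ref{B4} and \ref{B5} from Section \ref{KH par2}. Your additional check that the two constructions are mutually inverse (via the uniqueness clauses of Proposition \ref{mor} and Lemmas \ref{B1} and \ref{B3}) is more than the paper explicitly records, but it is consistent with and implicit in its argument.
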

\begin{proof}
In section \ref{KH par}, we proved that from a good wild harmonic bundle with a symplectic structure we obtain a good filtered Higgs bundle satisfying the vanishing condition (\ref{vanish}) equipped with a perfect skew-symmetric pairing. We have the opposite side from section \ref{KH par2}. 
\end{proof}
The compact case is straightforward from Theorem \ref{KH}. However, for the compact case, we do not have to assume $X$ to be projective. In particular, the statement holds for arbitrary K\"ahler manifolds.
\begin{corollary}\label{KH cpt}
Let $X$ be a compact K\"ahler manifold. The following objects are equivalent on $X$.
\begin{itemize}
\item Harmonic bundles with a symplectic structure.
\item Polystable Higgs bundles with vanishing Chern classes with a perfect skew-symmetric pairing.
\end{itemize}
\end{corollary}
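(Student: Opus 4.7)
The plan is to handle Corollary \ref{KH cpt} in two steps: first derive the smooth projective case as a specialization of Theorem \ref{KH} with $H=\emptyset$, then explain how to remove the projectivity hypothesis by rerunning the structural arguments of Section \ref{KH part} using only inputs that are available on any compact K\"ahler manifold.

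When $H=\emptyset$, a filtered bundle is nothing but a locally free $\mathcal{O}_X$-module, the prolongation $\mathcal{P}^h_*E$ coincides with $E$, the good-wild condition on $\theta$ is vacuous, and a perfect skew-symmetric pairing $\mathcal{P}_*\mathcal{V}\otimes\mathcal{P}_*\mathcal{V}\to\mathcal{P}^{(0)}_*(\mathcal{O}_X(\ast H))$ reduces to a perfect skew-symmetric pairing $E\otimes E\to\mathcal{O}_X$ in the ordinary sense. The vanishing condition (\ref{vanish}) reduces to $c_1(E)=0$ and $\int_X\mathrm{ch}_2(E)c_1(L)^{\dim X-2}=0$, which for polystable Higgs bundles is equivalent to $c_1(E)=c_2(E)=0$ by standard Bogomolov--Gieseker type arguments (cf.\ Theorem \ref{HS}). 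Hence Theorem \ref{KH} specializes directly to the projective case of the corollary.

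To treat an arbitrary compact K\"ahler $X$, I would replace the use of Theorem \ref{KH0} throughout Section \ref{KH part} by Theorem \ref{HS}. For the forward direction, a harmonic bundle $(E,\overline{\partial}_E,\theta,h)$ with symplectic structure $\omega$ is polystable with vanishing Chern classes by Theorem \ref{HS}, and $\omega$ is by definition already a perfect skew-symmetric pairing of the Higgs bundle. For the converse, start from a polystable $(E,\overline{\partial}_E,\theta)$ with vanishing Chern classes equipped with a perfect skew-symmetric pairing $\omega$, and take the canonical decomposition $(E,\theta)\simeq\bigoplus_i(E_i,\theta_i)\otimes\mathbb{C}^{n(i)}$ into stable summands. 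The isomorphism $\Psi_\omega:(E,\theta)\to(E^\vee,-\theta^\vee)$ permutes the isomorphism classes of the $(E_i,\theta_i)$, splitting them into three families: those self-dual via a symmetric pairing, those self-dual via a skew-symmetric pairing, and those whose dual is a different summand. The purely formal arguments behind Lemma \ref{B} and Lemma \ref{B2} only use simplicity of stable Higgs bundles and the uniqueness of a pairing inducing a given isomorphism up to scalar, so they apply verbatim and yield the analogue of Proposition \ref{B4}.

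The last step is to build a pluri-harmonic metric compatible with $\omega$. On each basic summand, Theorem \ref{HS} gives a pluri-harmonic metric unique up to positive scalar on each stable factor, so we may rescale to obtain $h_i^{(a)}$ making $\Psi_{P_i^{(a)}}$ an isometry in the self-dual cases; the dualities in the paired case are handled as in Lemma \ref{B3}. Assembling these as in Proposition \ref{B5}, in the form (\ref{shape cpt}), produces a harmonic metric $h$ on $E$ compatible with $\omega$, so $(E,\overline{\partial}_E,\theta,h)$ is a harmonic bundle with symplectic structure. The only point that requires minor care is that the ``adaptedness'' and parabolic-uniqueness statements invoked from \cite{M2} be replaced by their trivial compact-K\"ahler analogues in Theorem \ref{HS}; this is the main, though still routine, technical verification.
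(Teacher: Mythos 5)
Your proposal is correct and follows essentially the same route as the paper, which simply derives the corollary from Theorem \ref{KH} together with the observation that projectivity is only needed where Theorem \ref{KH0} is invoked, and that on a compact K\"ahler manifold Theorem \ref{HS} serves as a substitute. Your write-up usefully fills in the details of that substitution (the formal decomposition lemmas being independent of projectivity, and the metric construction relying only on uniqueness up to scalar), but the underlying argument is the one the paper intends.
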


\end{document}